
\documentclass[11pt,oneside,leqno]{amsart}
\usepackage{amsxtra}
\usepackage{amsopn}
\usepackage{color}
\usepackage{amsmath,amsthm,amssymb}
\usepackage{amscd}
\usepackage{amsfonts}
\usepackage{latexsym}
\usepackage{verbatim}
\usepackage{multirow}
\usepackage{lscape}
\usepackage{tabularx}
\usepackage{multirow}
\usepackage{enumerate}

\theoremstyle{plain}
\newtheorem{theorem}{Theorem}[section]

\newtheorem{lemma}[theorem]{Lemma}
\newtheorem{proposition}[theorem]{Proposition}
\newtheorem{corollary}[theorem]{Corollary}
\newtheorem{remark}[theorem]{Remark}

\newtheorem{example}[theorem]{Example}

\newtheorem{remark-question}[section]{Remark-Question}

\newcommand\B{{\mathbb B}}
\newcommand\RR{{\mathbb R}}
\newcommand\C{{\mathbb C}}

\newcommand\GL{{\hbox{\rm GL}}}

\newcommand\R{{\mathbb R}}


\newcommand\treJF{{\Theta^{\varepsilon,\rho}_{\!(\!J,F)}}}
\newcommand\ureJF{{\Upsilon^{\varepsilon,\rho}_{\!(\!J,F)}}}
\newcommand\nre{{\nabla^{\varepsilon,\rho}}}
\newcommand\nreJF{{\nabla_{\!\!(\!J,F)}^{\varepsilon,\rho}}}

\newcommand\re{{\varepsilon,\rho}}
\newcommand\frb{{\mathfrak b}}
\newcommand\fre{{\mathfrak e}}
\newcommand\frg{{\mathfrak g}}
\newcommand\frh{{\mathfrak h}}

\newcommand\Real{{\mathfrak R}{\frak e}\,} 
\newcommand\Imag{{\mathfrak I}{\frak m}\,}

\newcommand\db{{\bar{\partial}}}
\newcommand\zzz{{\!\!\!}}

\sloppy

\setlength{\oddsidemargin}{0.1in}

\setlength{\evensidemargin}{0.3in}

\setlength{\textwidth}{6.4in}

\setlength{\rightmargin}{0.7in}

\setlength{\leftmargin}{-0.5in}

\setlength{\topmargin}{-0.1in}

\setlength{\textheight}{8.9in}


\begin{document}
\title[]{Six dimensional homogeneous spaces with holomorphically trivial canonical bundle}
\subjclass[2000]{
}

\author{Antonio Otal}
\address[A. Otal]{Centro Universitario de la Defensa\,-\,I.U.M.A.\\
Academia General Militar\\
Ctra. de Huesca s/n. 50090 Zaragoza\\
Spain}
\email{aotal@unizar.es}

\author{Luis Ugarte}
\address[L. Ugarte]{Departamento de Matem\'aticas\,-\,I.U.M.A.\\
Universidad de Zaragoza\\
Campus Plaza San Francisco\\
50009 Zaragoza, Spain}
\email{ugarte@unizar.es}

\maketitle

\begin{abstract}
We classify all the $6$-dimensional unimodular Lie algebras $\frg$ admitting a complex structure with non-zero closed $(3,0)$-form.
This gives rise to
$6$-dimensional compact homogeneous spaces $M=\Gamma\backslash G$, where $\Gamma$ is a lattice, admitting an
invariant complex structure with holomorphically trivial canonical bundle.
As an application, in the balanced Hermitian case, we study the instanton condition for any metric connection $\nre$ in the plane generated by the Levi-Civita connection and the Gauduchon line of Hermitian connections.
In the setting of the Hull-Strominger system with connection on the
tangent bundle being Hermitian-Yang-Mills, we prove that if a compact non-K\"ahler homogeneous space $M=\Gamma\backslash G$
admits an invariant solution
with respect to some non-flat connection $\nabla$ in the family $\nre$,
then
$M$ is a nilmanifold with underlying Lie algebra $\frh_3$, a solvmanifold with underlying algebra $\frg_7$, or a quotient of the semisimple group SL(2,$\C$).
Since it is known that the system can be solved on these spaces, our result implies that
they are the unique compact non-Kähler balanced homogeneous spaces
admitting such invariant solutions.
As another application, on the compact solvmanifold underlying the
Nakamura manifold,
we construct solutions, on any given balanced Bott-Chern class, to the heterotic equations of motion taking the Chern connection as (flat) instanton.
\end{abstract}



\section{Introduction}

\noindent
Complex manifolds with holomorphically trivial canonical bundle, possibly endowed with a special Hermitian metric, play a relevant role both in geometry and in  theoretical physics. An important source of these distinguished manifolds is provided by certain quotients of Lie groups $G$ by cocompact lattices $\Gamma$, more specifically, by $2n$-dimensional compact homogeneous spaces $M=\Gamma\backslash G$ endowed with an \emph{invariant} complex structure with holomorphically trivial canonical bundle.
By the latter we mean that the (unimodular) Lie algebra $\frg$ of $G$ has a complex structure with non-zero closed $(n,0)$-form.
For instance, when $M$ is a nilmanifold, i.e. $G$ is nilpotent, a result of Salamon \cite{S} ensures that any invariant complex structure on $M$ possesses a non-zero $(n,0)$-form which is closed.

In this paper we are interested in the complex dimension three, mainly due to its relation to the Hull-Strominger system. The precise definition of the system is given below and at this point we just recall that its solutions require in particular a compact complex manifold $X=(M,J)$ with non-vanishing holomorphic $(3,0)$-form $\Psi$.
When $M=\Gamma\backslash G$ is a $6$-dimensional nilmanifold
and $J$ is invariant,
the problem of determining which nilpotent Lie algebras $\frg$ admit a complex structure was completely solved in~\cite{S}.
Furthermore, the unimodular solvable Lie algebras $\frg$ of dimension $6$ that admit a complex structure with non-zero closed $(3,0)$-form are classified in \cite{FOU},
together with the existence problem for several special classes of Hermitian metrics.
Our first goal in this paper is to complete these previous results
by extending the classification to any unimodular Lie algebra in six dimensions.

In Section~\ref{section-2} we study the existence of complex structures with non-zero closed $(3,0)$-form on Lie algebras $\frg$ of real dimension $6$ which are unimodular and non-solvable. Different approaches are followed depending on the decomposability of the algebra $\frg$, but in any case our study relies on the analysis of stable forms in six dimensions~\cite{Hitchin}.
The main result is an uniqueness theorem in this setting, namely that $\mathfrak{so}(3,1)$, i.e. the real Lie algebra underlying $\mathfrak{sl}(2,\C)$, is the only Lie algebra admitting this kind of complex structures (see Theorem~\ref{clasif-non-solvable}).
For completeness, in the Corollaries~\ref{classification}, \ref{balanced-classification}~and~\ref{quotient-balanced} we collect this new result together with other known results to provide classifications of $6$-dimensional unimodular Lie algebras admitting complex structures $J$ with non-zero closed $(3,0)$-form, as well as of those having balanced Hermitian metrics.

\smallskip

As a first application of the above classification, we study the existence of invariant solutions of the Hull-Strominger system with connection $\nabla$ on the
tangent bundle being Hermitian-Yang-Mills. Such solutions satisfy the heterotic equations of motion.
In Section~\ref{HS-homogeneous} we review the definition and several important results about the system found in  \cite{A-Gar,CHSW,COMS,OS,FeiYau,FIUV09,FGV,Fu-Yau,MGarcia-Crelle,GFGM,GP,Hull,Iv,Li-Yau,OUV17,Str}. In particular, in \cite{FIUV09} invariant solutions of the heterotic equations of motion were first obtained on a nilmanifold with underlying Lie algebra $\frh_3$, whereas in \cite{OUV17} new solutions were found on a solvmanifold with underlying algebra $\frg_7$ and on the quotient of the semisimple group with Lie algebra $\mathfrak{so}(3,\!1)$.
In all these solutions, $\nabla$ is taken as the Strominger-Bismut connection~\cite{Bismut}, which is a non-flat instanton. Moreover, it was conjectured in \cite[Section 7]{OUV17} that these are the only spaces admitting such solutions; more concretely, if a compact non-K\"ahler homogeneous space $M=\Gamma\backslash G$
admits an invariant solution of the heterotic equations of motion with slope parameter $\alpha'>0$ and
with respect to some non-flat connection $\nabla$ in the ansatz $\nre$,
then $\nabla$ is the Bismut connection and $M$ is one of the three spaces above.

The connections $\nre$ constitute a plane of metric connections
where
important connections proposed for the anomaly cancellation equation live:
the Levi-Civita connection $\nabla^{LC}=\nabla^{0,0}$, the Hull connection $\nabla^-=\nabla^{-\frac12,0}$, the Chern connection $\nabla^c=\nabla^{0,\frac12}$ and
the Strominger-Bismut connection $\nabla^+=\nabla^{\frac12,0}$,
so also the Gauduchon line $\nabla^{\tau}$ of Hermitian connections~\cite{Gau} joining
$\nabla^+$ and $\nabla^c$ (see Section~\ref{HS-homogeneous} for details).
In Sections~\ref{instantones-nil} and~\ref{instantones-solv} we prove the following result related to this conjecture, which is valid independently of the sign of the slope parameter $\alpha'$:
let $M=\Gamma\backslash G$ be a $6$-dimensional compact manifold defined as the quotient of a simply connected Lie group $G$ by a lattice $\Gamma$ of maximal rank, and suppose that $M$ possesses an invariant complex structure $J$ with non-zero closed $(3,0)$-form admitting an invariant balanced metric $F$.
If some $\nre$ in the associated $(\re)$-plane of metric connections is a non-flat instanton, then the Lie algebra of $G$ is isomorphic to $\frh_3$, $\frg_7$, or $\mathfrak{so}(3,\!1)$
(see Theorem~\ref{main-instanton}).
For the proof, we study the nilpotent case in Section~\ref{instantones-nil}, whereas Section~\ref{instantones-solv} is devoted to the class of solvmanifolds.

\smallskip

As a second application, we consider the Chern connection to construct solutions on the Nakamura manifold with given balanced class.
In greater detail, in Section~\ref{nuevas-soluciones} we take $X$ as the compact complex manifold defined by the Nakamura manifold endowed with its abelian complex structure, and provide solutions of the heterotic equations of motion in the Bott-Chern class of any given invariant balanced metric $F$.
More concretely, in Theorem~\ref{new-solutions} it is proved that given any such $F$, there always exists another balanced metric $\tilde{F}$ with $[\tilde{F}^2]=[F^2]\in H_{\rm BC}^{2,2}(X,\R)$ and a non-flat instanton solving the heterotic equations of motion with respect to the (flat) Chern connection associated to $\tilde{F}$.

\section{Complex structures with closed $(3,0)$-form on non-solvable spaces}\label{section-2}

\noindent In this section we study the existence of complex structures with non-zero closed $(3,0)$-form on Lie algebras $\frg$ of real dimension six. Since the nilpotent and the solvable unimodular cases have been studied respectively in \cite{S} and \cite{FOU}, we will focus on the non-solvable unimodular setting. The conclusion will be that only $\mathfrak{so}(3,1)$ admits this special type of complex structures.
For completeness, in Section~\ref{resultados-classif} we collect this new result together with other known results to provide classifications for Lie algebras (see Corollary~\ref{classification}) and for compact homogeneous spaces with balanced Hermitian metrics (see Corollary~\ref{quotient-balanced}).

\smallskip

We will use different approaches depending on the decomposability of the algebra $\frg$, but in any case our study relies on the analysis of stable forms in six dimensions.

\subsection{Stable forms in dimension six}

Stable forms on $6$-dimensional vector spaces were widely studied in~\cite{Hitchin}. We recall the basic properties
omitting details which can be found in the referred paper.

Let $(V,\nu)$ be an  oriented six-dimensional real vector space, being $\nu\in\wedge^6 V^*$ a fixed volume form of $V$.
A three-form
$\rho\in\wedge^3 V^*$ is stable if its orbit under the action of $\GL(V)$ is open.

Stability can be characterized algebraically in the following way. Let $\kappa\colon\wedge^5V^*\to V\otimes\wedge^6V^*$ be the canonical isomorphism $\kappa(\xi)=v\otimes\nu$, with $\iota_v\nu=\xi$, and define
\begin{equation*}
  \begin{array}{l}
    K_\rho(v):=\kappa(\iota_v\rho\wedge\rho)\in V\otimes\wedge^6V^*,\qquad v\in V,\\[1em]
    \lambda(\rho):=\frac16\text{tr}(K_\rho^2)\in(\wedge^6V^*)^{\otimes 2}.
  \end{array}
\end{equation*}
Then, $\rho$ is stable if and only if $\lambda(\rho)\neq0$. A stable three-form $\rho$ defines a specific volume form
\begin{equation*}
  \phi(\rho)=\sqrt{|\lambda(\rho)|}\in\wedge^6V^*
\end{equation*}
and an endomorphism given by $J_\rho:=\frac{1}{\phi(\rho)}K_\rho$.

It turns out that $J_\rho$ is an almost complex structure if and only if $\lambda(\rho)<0$. In this case we say that $J_\rho$ is
the almost complex structure induced by $\rho$.
The dual almost complex structure, which along this paper we will denote again by $J_\rho$  (instead of $J_\rho^*$), acts on one-forms by the following formula
\begin{equation}\label{compleja_uno_formas}
  (J_\rho\alpha)(v)\,\phi(\rho)=\alpha\wedge\iota_v\rho\wedge\rho,\qquad \alpha\in V^*,\quad v\in V.
\end{equation}
In addition, the complex three-form $\Psi=\rho+iJ_\rho\rho$ has bidegree $(3,0)$ with respect to $J_\rho$.

\smallskip

Now, let $\frg$ be a $6$-dimensional Lie algebra and $J$ an almost complex structure on $\frg$, i.e. $J$ is an endomorphism $J\colon \frg\longrightarrow\frg$ satisfying $J^2=-{\rm Id}_\frg$. The almost complex structure $J$ is called integrable if it has no torsion, i.e. its Nijenhuis tensor
\begin{equation}\label{nijenhuis_tensor}
N_J(X,Y)=[JX,JY]-J[X,JY]-J[JX,Y]-[X,Y],\qquad X,\,Y\in\frg,
\end{equation}
vanishes identically. We are concerned with complex structures satisfying the stronger condition given by the existence of a non-zero closed $(3,0)$-form. As all of them arise from closed stable three-forms $\rho\in\wedge^3 \frg^*$, we will proceed as follows.

Let $\rho$ be a generic closed three-form on the Lie algebra $\frg$. Recall that the differential $d$ is induced from the formula
\begin{equation}\label{formula-d}
  d\alpha(X,Y)=-\alpha([X,Y]),\qquad \alpha\in\frg^*, \quad X,\, Y\in\frg.
\end{equation}
Now, consider the endomorphism $\tilde J_\rho$ defined by acting on one-forms as follows
\begin{equation}\label{criterio}
  \left( (\tilde J_\rho\alpha)(X) \right) \nu=\alpha\wedge\iota_X\rho\wedge\rho,\qquad\alpha\in\frg^*,\quad X\in\frg,
\end{equation}
where $\nu\in\wedge^6\frg^*$ is a
fixed volume form of $\frg$. From~\eqref{compleja_uno_formas} and~\eqref{criterio} we have that the endomorphisms $\tilde J_\rho$ and
the dual of $K_\rho$ coincide.
Then we define $\tilde \lambda(\rho)$ as the scalar given by
$$
\tilde \lambda(\rho) \, \nu^{\otimes 2}=\lambda(\rho)=\frac16\text{tr}(K_\rho^{\,2})=\frac16\text{tr}(\tilde J_\rho^{\,2}).
$$
When $\tilde \lambda(\rho)<0$ and $d(\tilde J_\rho\rho)=0$ we get an almost complex structure $J_\rho$ on $\frg$ with non-zero closed $(3,0)$-form $\Psi=\rho+i\, J_\rho\rho$.
It is straightforward
that the condition $d\Psi=0$ implies that the differential of any (1,0)-form has vanishing (0,2)-component, which in turn is equivalent to the Nijenhuis tensor \eqref{nijenhuis_tensor} of $J_\rho$ being identically zero, i.e. $J_\rho$ is integrable.

As a consequence, in order to prove that a given Lie algebra $\frg$ does not admit any complex structure with closed $(3,0)$-form, it is enough to show
that, for
any closed $\rho\in\wedge^3 \frg^*$,  one gets ${\rm tr}(\tilde J_\rho^{\,2})\geq0$ whenever
$\tilde J_\rho\rho$ is closed.
However, this condition becomes very intricate to deal with in the case
of non-solvable $3\oplus3$ decomposable Lie algebras, and we will use instead the fact that the almost complex structures $J_{\rho}$ satisfying
$N_{J_\rho}=0$ would induce a family of linear endomorphisms on the first $3$-dimensional factor which are not torsion free, so contradicting the existence of such a $J_\rho$ (see Section~\ref{Sec_decomposable} and the proof of Proposition~\ref{prop_sl2R} for the precise argument).

\smallskip

As it is explained in the introduction, we are interested in the geometry of compact complex manifolds of the form $M=\Gamma\backslash G$, where $\Gamma$ is a lattice, endowed with an
invariant complex structure with holomorphically trivial canonical bundle.
Hence, if $\frg$ is the Lie algebra of $G$, in addition to have a complex structure with non-zero closed $(3,0)$-form, $\frg$ must be unimodular due to  a well-known result by Milnor \cite{Milnor} (see also \cite{FOU} for other necessary conditions on the cohomology of $\frg$).

\subsection{Six-dimensional unimodular non-solvable Lie algebras}

In this section we recall the different classes of unimodular non-solvable Lie algebras in six dimensions.
When the Lie algebra is decomposable, i.e. $\frg=\bigoplus_{j=1}^n\frg_j$, the unimodularity of $\frg$ requires
the unimodularity of every summand $\frg_j$. Taking this fact into account, together with the lists of non-solvable Lie algebras up to dimension 5 (see~\cite[Table 1]{Schu} and~\cite[Table 2]{F-Schu1}),
one has the following classification of decomposable unimodular non-solvable Lie algebras of dimension six:
\begin{equation}\label{lista-descomp}
\begin{array}{l}
\mathfrak{sl}(2,\RR)\oplus\RR^3 = (e^{23},\,-e^{13},\,-e^{12},\,0,\,0,\,0),\\[3pt]
\mathfrak{sl}(2,\RR)\oplus\frh_3 = (e^{23},\,-e^{13},\,-e^{12},\,0,\,0,\,e^{45}),\\[3pt]
\mathfrak{sl}(2,\RR)\oplus\fre(1,1) = (e^{23},\,-e^{13},\,-e^{12},\,0,\,-e^{46},\,-e^{45}),\\[3pt]
\mathfrak{sl}(2,\RR)\oplus\fre(2) = (e^{23},\,-e^{13},\,-e^{12},\,0,\,-e^{46},\,e^{45}),\\[3pt]
\mathfrak{sl}(2,\RR)\oplus\mathfrak{sl}(2,\RR) = (e^{23},\,-e^{13},\,-e^{12},\,e^{56},\,-e^{46},\,-e^{45}),\\[3pt]
\mathfrak{sl}(2,\RR)\oplus\mathfrak{so}(3) = (e^{23},\,-e^{13},\,-e^{12},\,e^{56},\,-e^{46},\,e^{45}),\\[3pt]
\mathfrak{so}(3)\oplus\RR^3 = (e^{23},\,-e^{13},\,e^{12},\,0,\,0,\,0),\\[3pt]
\mathfrak{so}(3)\oplus\frh_3 = (e^{23},\,-e^{13},\,e^{12},\,0,\,0,\,e^{45}),\\[3pt]
\mathfrak{so}(3)\oplus\fre(1,1) = (e^{23},\,-e^{13},\,e^{12},\,0,\,-e^{46},\,-e^{45}),\\[3pt]
\mathfrak{so}(3)\oplus\fre(2) = (e^{23},\,-e^{13},\,e^{12},\,0,\,-e^{46},\,e^{45}),\\[3pt]
\mathfrak{so}(3)\oplus\mathfrak{so}(3) = (e^{23},\,-e^{13},\,e^{12},\,e^{56},\,-e^{46},\,e^{45}),\\[3pt]
A_{5,40}\oplus\RR = (2e^{12},\,-e^{13},\,2e^{23},\,e^{24}+e^{35},\,e^{14}-e^{25},\,0).
\end{array}
\end{equation}

All the Lie algebras in \eqref{lista-descomp} are $3\oplus3$ decomposable except $A_{5,40}\oplus\RR$, which is $5\oplus1$ decomposable.
For the description of the structure of each Lie algebra we are using the exterior derivative $d$ instead of the Lie bracket due to the formula
\eqref{formula-d}. In greater detail, for instance the notation $\mathfrak{sl}(2,\RR)\oplus\RR^3=(e^{23},\,-e^{13},\,-e^{12},\,0,\,0,\,0)$ means that the Lie algebra has a basis of one-forms $\{e^j\}_{j=1}^{6}$ such
that
\begin{equation*}
de^1=e^2\wedge e^3,\ \ de^2=-e^1\wedge e^3,\ \ de^3=-e^1\wedge e^2,\ \  de^4=de^5=de^6=0.
\end{equation*}

\smallskip

In the indecomposable case, we are taking the classification from~\cite[Table 2]{F-Schu2}, so up to isomorphism one has the following
list of indecomposable unimodular non-solvable Lie algebras in six dimensions:
\begin{equation}\label{lista-indescomp}
\begin{array}{l}
L_{6,1} = (e^{23},\, -e^{13},\, e^{12},\, e^{26} - e^{35},\, -e^{16} + e^{34},\, e^{15} - e^{24}),\\[3pt]
L_{6,2} = (e^{23},\, 2e^{12},\, -2e^{13},\, e^{14} + e^{25},\, -e^{15} + e^{34},\, e^{45} ),\\[3pt]
L_{6,4} = (e^{23},\, 2e^{12},\, -2e^{13},\, 2e^{14} + 2e^{25},\, e^{26} + e^{34},\, -2e^{16} + 2e^{35} ),\\[3pt]
\mathfrak{so}(3,1) = (e^{23}-e^{56},\,-e^{13}+e^{46},\,e^{12}-e^{45},\,e^{26}-e^{35},\,-e^{16}+e^{34},\,e^{15}-e^{24}).
\end{array}
\end{equation}

If we remove the unimodularity condition, then another Lie algebra, labeled as  $L_{6,3}$, appears. It turns out that this algebra has complex structures with non-zero closed $(3,0)$-form (see Remark~\ref{L63} for details), however no compact quotient of the corresponding simply-connected Lie group by a lattice exists.

It is well-known that $\mathfrak{so}(3,1)$
admits complex structures with closed $(3,0)$-form:

\begin{example}\label{so(3,1)}
The real Lie algebra $\mathfrak{so}(3,1)$ underlies the  $3$-dimensional complex Lie algebra $\mathfrak{sl}(2,\C)$ given by the complex structure equations
\begin{equation}\label{complex-para-str}
d\omega^1=\omega^{23},\quad d\omega^2=-\omega^{13},\quad d\omega^3=\omega^{12}.
\end{equation}
To see it, it suffices to check that the complex one-forms $\{\omega^k\}_{k=1}^{3}$ are forms of bidegree $(1,0)$ with respect to an almost complex structure on
$\mathfrak{so}(3,1)$. Let $J$ be the almost complex structure on $\mathfrak{so}(3,1)$ defined, in terms of the real basis $\{e^j\}_{j=1}^{6}$ given in \eqref{lista-indescomp}, by
\begin{equation*}
Je^1=e^4,\quad Je^2=e^5,\quad Je^3=e^6,\quad Je^4=-e^1,\quad Je^5=-e^2,\quad Je^6=-e^3.
\end{equation*}
Now, consider the $(1,0)$-forms with respect to $J$ given by $\omega^1=e^3-ie^6$, $\omega^2=e^1-ie^4$, and $\omega^3=e^2-ie^5$. A direct calculation
shows that $\omega^1,\,\omega^2,\, \omega^3$ satisfy~\eqref{complex-para-str} and, as $J$ is complex parallelizable, the $(3,0)$-form $\Psi=\omega^{123}$ is closed.
Note that $J$ corresponds to the closed stable three-form $\rho=e^{123}-e^{156}+e^{246}-e^{345}\in \wedge^3\mathfrak{so}(3,1)^*$.
\end{example}

\subsection{The decomposable case}\label{Sec_decomposable}

When the Lie algebra is decomposable we will use the observation in~\cite[Lemma 1]{Magnin} that the integrability of $J$ induces a torsion free endomorphism on every summand.

By definition, a \emph{torsion free endomorphism} on a Lie algebra $\frh$ is a
vector space homomorphism $F\colon\frh\to\frh$ satisfying $N_{F}(X,Y)= 0$ for any $X,Y\in\frh$, where
\begin{equation}\label{torsion_tensor}
N_{F}(X,Y)=[FX,FY]-F[X,FY]-F[FX,Y]-[X,Y].
\end{equation}
Note that the identically zero endomorphism $F\equiv 0$ has zero torsion if and only if the Lie algebra $\frh$ is abelian, and that there are Lie algebras not admitting any torsion free endomorphism.

Now, let $\frg=\bigoplus_{j=1}^n\frg_j$ be a decomposable Lie algebra, and denote by $i_j\colon\frg_j\to\frg$ and $\pi_j\colon\frg\to\frg_j$ the natural inclusion and projection, respectively, for every $j$-th summand. Let $J\colon\frg\to\frg$ be an almost complex structure on $\frg$ and define, for every $j$ with $1\leq j\leq n$,
the endomorphism $F_j=\pi_j\circ J\circ i_j\colon \frg_j\to\frg_j$.

Suppose that $J$ is integrable, i.e. the Nijenhuis tensor $N_J\equiv 0$. Taking the  projection on every summand $\frg_j$, it follows from \eqref{nijenhuis_tensor} that
$$
[\pi_j JX,\pi_j JY]_{\frg_j}-\pi_j J([X,\pi_j JY]_{\frg_j})-\pi_j J([\pi_j JX,Y]_{\frg_j})-[X,Y]_{\frg_j}=0
$$
for any $X,Y\in\frg_j$. Therefore, the endomorphism $F_j\colon \frg_j\to\frg_j$
satisfies $N_{F_j}\equiv 0$, so we have:

\begin{lemma}\label{lema_Magnin}\cite{Magnin}
If $J\colon\frg\to\frg$ is integrable, then $F_j\colon \frg_j\to\frg_j$ is a torsion free endomorphism for every $j$.
\end{lemma}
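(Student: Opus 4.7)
The plan is to derive $N_{F_j}\equiv 0$ directly from $N_J\equiv 0$ by projecting the Nijenhuis identity onto the $j$-th summand. The key structural fact to exploit is that in the direct sum decomposition $\frg=\bigoplus_{k=1}^n \frg_k$ each summand is an ideal, so that $[\frg_j,\frg_k]=0$ whenever $j\neq k$; in particular every $\frg_j$ is itself a subalgebra of $\frg$, and any bracket of two elements of $\frg_j$ remains in $\frg_j$.

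Fix $X,Y\in\frg_j$ and decompose $JX=F_jX+Z_X$ and $JY=F_jY+Z_Y$ with $Z_X,Z_Y\in\bigoplus_{k\neq j}\frg_k$. Using that brackets between distinct summands vanish, the cross terms $[F_jX,Z_Y]$ and $[Z_X,F_jY]$ drop out, and $[Z_X,Z_Y]$ has trivial $\frg_j$-component, so $\pi_j[JX,JY]=[F_jX,F_jY]$. By the same reasoning $\pi_j[X,JY]=[X,F_jY]$ and $\pi_j[JX,Y]=[F_jX,Y]$, while $\pi_j[X,Y]=[X,Y]$ since $[X,Y]\in\frg_j$ already. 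For the two middle terms of $N_J(X,Y)$ one further uses that $[X,F_jY]$ and $[F_jX,Y]$ both lie in $\frg_j$, so that $\pi_j\circ J$ applied to them coincides with $F_j$ applied to them by the very definition $F_j=\pi_j\circ J\circ i_j$.

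Assembling the four identities above and applying $\pi_j$ to the equation $N_J(X,Y)=0$ given by \eqref{nijenhuis_tensor}, the result is
\[
[F_jX,F_jY]-F_j[X,F_jY]-F_j[F_jX,Y]-[X,Y]=0,
\]
which is precisely $N_{F_j}(X,Y)=0$ in view of \eqref{torsion_tensor}. No substantive obstacle arises: the argument is a careful but routine bookkeeping of projections, and the decomposability hypothesis on $\frg$ together with the definition of $F_j$ are tailored precisely so that the $\frg_j$-component of $N_J$ coincides with $N_{F_j}$.
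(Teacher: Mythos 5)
Your proof is correct and follows essentially the same route as the paper: the paper also projects the identity $N_J\equiv 0$ onto each summand $\frg_j$, using that the summands are ideals with $[\frg_j,\frg_k]=0$ for $j\neq k$, so that the $\frg_j$-component of $N_J(X,Y)$ for $X,Y\in\frg_j$ is exactly $N_{F_j}(X,Y)$. Your write-up simply makes explicit the bookkeeping (the decomposition $JX=F_jX+Z_X$ and the vanishing of the cross terms) that the paper leaves implicit.
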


This will be particularly useful in the case of  $3\oplus3$ decomposable Lie algebras.
As a first step we have the following lemma stating that certain endomorphisms of the Lie algebras $\mathfrak{sl}(2,\R)$ and $\mathfrak{so}(3)$ are not torsion free.

\begin{lemma}\label{lema_sl2R}
For any $A,\,B,\,C,\,\lambda,\,\mu,\,\tau\in\mathbb{R}$, we have:
\begin{enumerate}
  \item[{\rm(i)}] The endomorphism $F\colon\mathfrak{sl}(2,\R)\to\mathfrak{sl}(2,\R)$ defined by the coordinate matrix
\begin{equation}\label{matriz_sl2R}
  F=
  \begin{pmatrix}
    \lambda&A&B\\
    A&\mu&C\\
    -B& -C&\tau
  \end{pmatrix},
\end{equation}
in the basis $\{e_k\}_{k=1}^3$ with brackets $[e_1,e_2]=e_3,\, [e_1,e_3]=e_2,\,[e_2,e_3]=-e_1$, is not torsion free.
\item[{\rm (ii)}] The endomorphism $F\colon\mathfrak{so}(3)\to\mathfrak{so}(3)$ defined by the coordinate matrix
\begin{equation}\label{matriz_so3}
  F=
  \begin{pmatrix}
    \lambda&A&B\\
    A&\mu&C\\
    B& C&\tau
  \end{pmatrix},
\end{equation}
in the basis $\{e_k\}_{k=1}^3$ with  brackets $[e_1,e_2]=-e_3,\, [e_1,e_3]=e_2,\,[e_2,e_3]=-e_1$, is not torsion free.
\end{enumerate}
\end{lemma}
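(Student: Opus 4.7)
The plan is to impose $N_F\equiv 0$ on the three independent basis pairs $(e_1,e_2)$, $(e_1,e_3)$, $(e_2,e_3)$ — skew-symmetry of $N_F$ in \eqref{torsion_tensor} makes the other three pairs redundant — obtaining a system of nine polynomial equations in the six parameters $A,B,C,\lambda,\mu,\tau$, and then to show that this system has no real solution. The two cases (i) and (ii) can be handled in parallel because, despite the sign difference in the brackets and in the matrix, the squaring step below collapses them to the same polynomial system in $\lambda,\mu,\tau$.

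For each pair I would expand the four bracket terms of \eqref{torsion_tensor} using the given structure constants. The coefficients on $e_i$ and $e_j$ in $N_F(e_i,e_j)$ simplify, thanks to the self-adjoint shape of $F$, to single pairwise conditions; collecting them across the three pairs yields three off-diagonal relations
\[
AC=B\mu,\qquad AB=\lambda C,\qquad A\tau=\pm BC,
\]
with the sign depending on the case. The three coefficients on $e_k$ (where $\{i,j,k\}=\{1,2,3\}$) are diagonal equations, linear in $A^2,B^2,C^2$ and in the products $\lambda\mu,\lambda\tau,\mu\tau$; suitably adding and subtracting them gives $A^2=\lambda\mu+1$ together with analogous formulas for $B^2$ and $C^2$ (whose signs depend on the case but are inessential at the next step).

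Squaring each of the three off-diagonal relations and substituting the expressions for $A^2,B^2,C^2$ makes the cross terms of type $\lambda\mu\tau^{2}$ cancel, leaving in both cases the same symmetric triple
\[
\lambda(\lambda-\mu-\tau)=1,\qquad \mu(\mu-\lambda-\tau)=1,\qquad \tau(\tau-\lambda-\mu)=1.
\]
Pairwise subtraction factors as $(\lambda-\mu)(\lambda+\mu-\tau)=0$ and its two cyclic analogues, and a short case analysis — all three parameters equal, exactly two equal, or all three distinct — forces either $-\lambda^2=1$ or one of $\lambda,\mu,\tau$ to vanish in a slot where the corresponding equation demands a value of $1$. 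In every branch we reach a contradiction, so no choice of the six parameters can make $N_F$ vanish identically.

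The main obstacle is purely book-keeping: the nine-term expansion of $N_F$ is lengthy and sign-sensitive, and one must verify that the exact placement of the signs in \eqref{matriz_sl2R} and \eqref{matriz_so3} — the only essential difference between the two cases — is what makes the off-diagonal coefficients reduce to the clean product relations above and, after squaring, produces the identical symmetric triple for both Lie algebras. Once these reductions are in place, the final case analysis is completely routine.
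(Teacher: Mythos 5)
Your plan is correct, and its intermediate claims check out: the six off-diagonal coefficient equations of $N_F$ collapse to the three relations $AC=B\mu$, $AB=\lambda C$, $A\tau=\mp BC$; the three diagonal ones are equivalent, via an invertible linear combination, to $A^2=\lambda\mu+1$ together with $B^2=\mp(\lambda\tau+1)$ and $C^2=\mp(\mu\tau+1)$; and squaring and substituting yields, for \emph{both} Lie algebras, the identical system
\[
\lambda(\lambda-\mu-\tau)=\mu(\mu-\lambda-\tau)=\tau(\tau-\lambda-\mu)=1,
\]
whose non-solvability over $\R$ follows from your case analysis on coincidences among $\lambda,\mu,\tau$. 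This is, however, a genuinely different way of finishing than the paper's. The paper treats only $\mathfrak{sl}(2,\R)$ explicitly (declaring $\mathfrak{so}(3)$ analogous) and eliminates sequentially without squaring: a combination of two diagonal equations gives $C^2+\mu\tau=-1$, hence $\mu\tau<0$; substituting $B=AC/\mu$ into $A\tau+BC=0$ gives $A(C^2+\mu\tau)=0$, so $A=B=0$; a further combination forces $\lambda\mu=-1$, whence $C=0$ and finally $\tau(\lambda-\mu)=0$, contradicting $\lambda\mu=-1$. Your squaring discards sign information, which is harmless for a non-existence statement, and it buys two things: $A,B,C$ are eliminated in one stroke, and the two cases become literally identical at the final stage rather than merely ``analogous'', since the case-dependent signs disappear under squaring. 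The price is the terminal case analysis, which the paper's linear elimination avoids. Either route is complete once the (admittedly sign-sensitive) expansion of $N_F$ on the three basis pairs is written out.
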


\begin{proof}
We prove only the first statement as the second follows an analogous argument. In the first case, by~\eqref{torsion_tensor} we have that the torsion free condition for $F$ is equivalent to
\begin{align*}
0=N_F(e_1,e_2)&= 2(AC-B\mu)e_1+2(AB-\lambda C)e_2-(1+A^2+B^2+C^2-\lambda\mu+\lambda\tau+\mu\tau)e_3,\\[5pt]
0=N_F(e_1,e_3)&=-2(A\tau+BC)e_1-(1-A^2-B^2+C^2+\lambda\mu-\lambda\tau+\mu\tau)e_2-2(AB-\lambda C)e_3,\\[5pt]
0=N_F(e_2,e_3)&=(1-A^2+B^2-C^2+\lambda\mu+\lambda\tau-\mu\tau)e_1+2(A\tau+BC)e_2+2(AC-B\mu)e_3.
\end{align*}

We will arrive to a contradiction assuming that $F$ is torsion free.
From
$$
0=e^3(N_F(e_1,e_2))+e^2(N_F(e_1,e_3))=2(1+C^2+\mu\tau),
$$
we get that $\mu\tau<0$ (in particular $\mu,\tau\not=0$). So, from the equation $0=e^1(N_F(e_1,e_2))=2(AC-B\mu)$ we have $B=\frac{AC}{\mu}$, and substituting in $0=e^1(N_F(e_1,e_3))=-2(A\tau+BC)$ we get $A(C^2+\mu\,\tau)=0$, which implies $A=0$ because $C^2+\mu\,\tau =-1$. As $A=0$, then $B=0$.

Now, from
$$
0=e^2(N_F(e_1,e_3))+e^1(N_F(e_2,e_3))=2(1+\lambda\mu),
$$
we get $\lambda\mu=-1$. In particular $\lambda\not=0$ and we obtain $C=0$ from the equation $e^3(N_F(e_1,e_3))=0$.

Finally, from the equation $e^2(N_F(e_1,e_3))=0$ we get $\tau(\lambda-\mu) = 0$, so $\lambda=\mu$. But this implies $\lambda\mu=\lambda^2> 0$, contradicting that $\lambda\mu=-1$.
\end{proof}

In the following result we consider $\frg=\frg_1\oplus\frg_2$ as any of the $3\oplus3$ decomposable Lie algebras in \eqref{lista-descomp}.  We shall prove that for any closed three-form $\rho$ on $\frg$, if $\lambda(\rho)<0$ then the almost complex structure $J_\rho$ induces an endomorphism on $\frg_1$ which is not torsion free.

\begin{proposition}\label{prop_sl2R}
The $3\oplus3$ decomposable unimodular non-solvable Lie algebras do not admit any complex structure with non-zero closed $(3,0)$-form.
\end{proposition}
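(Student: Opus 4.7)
The plan is to argue by contradiction. Suppose some Lie algebra $\frg = \frg_1 \oplus \frg_2$ from the $3\oplus 3$ decomposable list in~\eqref{lista-descomp} carries a complex structure with non-zero closed $(3,0)$-form; then $\frg_1 \in \{\mathfrak{sl}(2,\R),\mathfrak{so}(3)\}$ and the complex structure equals $J_\rho$ for a closed stable three-form $\rho$ satisfying $\tilde\lambda(\rho)<0$ and $d(\tilde J_\rho \rho)=0$. The strategy is to combine Lemmas~\ref{lema_Magnin} and~\ref{lema_sl2R}: the former guarantees that the endomorphism $F_1 := \pi_1 \circ J_\rho \circ i_1 : \frg_1 \to \frg_1$ is torsion-free, while the latter rules out torsion-free endomorphisms of $\frg_1$ of the precise matrix form~\eqref{matriz_sl2R} or~\eqref{matriz_so3}. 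The core of the argument is thus to show that the matrix of $F_1$ in the basis of $\frg_1$ fixed in~\eqref{lista-descomp} must have exactly this shape.

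First I would decompose $\rho$ in the bigrading induced by $\frg=\frg_1\oplus\frg_2$,
$$\rho = a\, e^{123} + \sum_{k=4}^{6} \omega_k \wedge e^k + \sum_{i=1}^{3} e^i \wedge \theta_i + b\, e^{456},$$
with $\omega_k \in \wedge^2\frg_1^*$ and $\theta_i \in \wedge^2\frg_2^*$, and then extract the constraints imposed by $d\rho=0$. Since $\wedge^{\geq 4}\frg_j^*=0$, closedness forces $d\omega_k=0$, $d\theta_i=0$, and the mixed relation
$$\sum_{k=4}^{6} \omega_k\wedge de^k+\sum_{i=1}^{3}de^i\wedge\theta_i=0,$$
which couples the coefficients of the $\omega_k$ to those of the $\theta_i$. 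Next I would use formula~\eqref{criterio} to express the $(j,i)$-entry of $F_1$ as the coefficient of $e^{123456}$ in $e^j \wedge \iota_{e_i}\rho \wedge \rho$, up to the positive factor $\phi(\rho)$. Only bidegree $(3,3)$ contributions survive in this wedge product: the pairing $\rho_{30} \wedge \rho_{03}$ yields the diagonal piece $ab\,\delta_{ij}$, while the cross pairings of $\rho_{21}$ with $\rho_{12}$ and the self-pairing of $\rho_{21}$ generate the remaining entries. After a careful sign bookkeeping and use of the mixed closedness relation, the matrix of $F_1$ assembles into the prescribed shape~\eqref{matriz_sl2R} (when $\frg_1=\mathfrak{sl}(2,\R)$) or~\eqref{matriz_so3} (when $\frg_1=\mathfrak{so}(3)$).

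Once this matrix shape is in hand, Lemma~\ref{lema_sl2R} produces a pair of basis vectors of $\frg_1$ on which $N_{F_1}$ does not vanish, contradicting Lemma~\ref{lema_Magnin} and completing the proof. The hard part will be this intermediate matrix computation: verifying that the entries of $F_1$ line up with the prescribed symmetric-antisymmetric pattern is not evident from the bigraded expansion alone, and the relevant cancellations only become visible after balancing the wedge-product bookkeeping against the mixed closedness relation and the specific structure constants of the simple factor $\frg_1$.
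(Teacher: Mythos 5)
Your strategy coincides with the paper's: closedness of $\Psi$ forces $N_{J_\rho}=0$, Lemma~\ref{lema_Magnin} then makes $F_1=\pi_1\circ J_\rho\circ i_1$ torsion free, and Lemma~\ref{lema_sl2R} rules this out once the matrix of $F_1$ is shown to have the shape \eqref{matriz_sl2R} or \eqref{matriz_so3}. The paper establishes that matrix shape not by a uniform bigraded argument but by direct case-by-case computation of $\tilde F_\rho=\pi_1\circ\tilde J_\rho\circ i_1$ from the generic closed three-form on each $\frg_1\oplus\frg_2$, with the resulting values of $A,B,C,\lambda,\mu,\tau$ tabulated in Tables~\ref{tabla_sl2R} and~\ref{tabla_su2}; this is precisely the step you defer as ``the hard part,'' so your proposal is the same proof with its computational core left unexecuted. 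One small correction to your bookkeeping: for $e_i\in\frg_1$ the self-pairing $e^j\wedge\iota_{e_i}\rho_{21}\wedge\rho_{21}$ lands in bidegree $(4,2)$ and hence vanishes, so only the $\rho_{30}$--$\rho_{03}$ and $\rho_{21}$--$\rho_{12}$ cross pairings contribute to the entries of $F_1$.
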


\begin{proof}
The $3\oplus3$ decomposable unimodular non-solvable Lie algebras are given in \eqref{lista-descomp}, and they split as $\frg=\frg_1\oplus\frg_2$, where
$\frg_1$ is either $\mathfrak{sl}(2,\R)$ or $\mathfrak{so}(3)$, and $\frg_2$ runs  the following list: $\R^3$, $\frh_3$, $\fre(1,1)$, $\fre(2)$, $\mathfrak{sl}(2,\R)$,
$\mathfrak{so}(3)$. Hence, we distinguish two cases depending on the first summand:

\smallskip

\noindent $\bullet$ The case $\frg=\mathfrak{sl}(2,\R)\oplus\mathfrak{g}_2$.
We show the details of the proof for the Lie
algebra $\mathfrak{g}_2=\R^3$. Firstly, any closed three-form $\rho$ on the Lie algebra $\mathfrak{sl}(2,\R)\oplus\R^3$ is given by
    \begin{align*}
      \rho= \, &\, a_1 e^{123}+a_2 e^{124}+a_3 e^{125}+a_4 e^{126}+a_5 e^{134}+a_6 e^{135}+a_7 e^{136}+a_8 e^{234}+a_9e^{235}\\
      & +a_{10} e^{236}+a_{11} e^{456},
  \end{align*}
where $a_1,\ldots,a_{11}\in\R$. Taking the volume form $\nu=e^{123456}$ we consider the endomorphism $\tilde J_\rho$ defined by \eqref{criterio}.

Suppose that $\tilde \lambda(\rho)<0$, i.e. we get that $J_\rho$ is an almost complex structure on $\frg$. We define the linear endomorphisms $F_\rho=\pi_1\circ J_\rho\circ i_1$ and $\tilde F_\rho=\pi_1\circ \tilde J_\rho\circ i_1$ on $\mathfrak{sl}(2,\R)$ induced by $J_\rho$ and $\tilde J_\rho$, respectively.
Recall that $J_\rho=|\tilde \lambda(\rho)|^{-1/2}\tilde J_\rho$, so $F_\rho$ is also a multiple of $\tilde F_\rho$, i.e.
$$
F_\rho=|\tilde \lambda(\rho)|^{-1/2}\tilde F_\rho.
$$
By a direct calculation one gets that the induced endomorphism
$\tilde F_\rho$ of $\mathfrak{sl}(2,\R)$ is given by~\eqref{matriz_sl2R} with the values $A=B=C=0$ and $\lambda=\mu=\tau=-a_{1}a_{11}$, so $F_\rho\colon \mathfrak{sl}(2,\R) \to \mathfrak{sl}(2,\R)$ is given by the values $\lambda,\mu,\tau,A,B,C$ multiplied by the constant $|\tilde \lambda(\rho)|^{-1/2}$, and therefore $F_\rho$ belongs to the same family of linear endomorhisms defined by~\eqref{matriz_sl2R}. Now, we can apply Lemma~\ref{lema_sl2R} to conclude that $F_\rho$ is not torsion free. This fact, together with Lemma~\ref{lema_Magnin}, implies that $J_\rho$ is not torsion free as well.
In particular, the latter excludes the existence of a complex structure with closed $(3,0)$-form on $\mathfrak{sl}(2,\R)\oplus\R^3$.

For the remaining cases for $\frg_2$, the proofs follow similar arguments but taking into account the corresponding values $\lambda,\mu,\tau$ and $A,B,C$ provided in
Table~\ref{tabla_sl2R} in the Appendix.

\smallskip

\noindent $\bullet$ The case $\frg=\mathfrak{so}(3)\oplus\mathfrak{g}_2$. As in the previous case, we show the details of the proof for the particular Lie
algebra $\mathfrak{g}_2=\mathfrak{h}_3$. A generic closed three-form
$\rho$ on the Lie algebra $\mathfrak{so}(3)\oplus\mathfrak{h}_3$ is given by
  \begin{align*}
  \rho= \, &\, a_1 e^{123} + a_2 e^{124} + a_3 e^{125} + a_4 e^{134} + a_5 e^{135} + a_6 e^{234} + a_7 e^{235} + a_8 (-e^{145} + e^{236}) \\[2pt]
    & +a_9 (e^{136} + e^{245}) + a_{10} (-e^{126} + e^{345}) + a_{11} e^{456},
  \end{align*}
where $a_1,\ldots,a_{11}\in\R$. Consider the endomorphism $\tilde J_\rho$ defined by \eqref{criterio} with respect to the volume form $\nu=e^{123456}$. Then, one gets that the induced endomorphism $\tilde F_\rho=\pi_1\circ \tilde J_\rho\circ i_1\colon \mathfrak{so}(3)\to \mathfrak{so}(3)$ is given by~\eqref{matriz_so3} with the values $A=-2 a_8 a_9$, $B=-2 a_{10} a_8$, $C=2 a_{10} a_9$, and
$$
\lambda=-a_{10}^2 - a_1 a_{11} + a_8^2 - a_9^2,\quad \mu=-a_{10}^2 - a_1 a_{11} - a_8^2 + a_9^2,\quad \tau=a_{10}^2 - a_1 a_{11} - a_8^2 - a_9^2.
$$
Hence, applying Lemma \ref{lema_sl2R} the endomorphism $F_\rho=|\tilde \lambda(\rho)|^{-1/2}\tilde F_\rho$ is not torsion free. Therefore, $N_{J_\rho}$ is not zero and consequently no complex structure with closed $(3,0)$-form exists on the Lie algebra $\mathfrak{so}(3)\oplus\mathfrak{h}_3$.

Similar arguments follow for the remaining cases for $\frg_2$, taking into account the corresponding values $A,B,C,\lambda,\mu,\tau$ provided in
Table~\ref{tabla_su2} in the Appendix.
\end{proof}

The remaining $5\oplus1$ decomposable Lie algebra $A_{5,40}\oplus\R$ requires a more involved argument that relies on the class of $\tilde \lambda(\rho)$ modulo the ideal generated by the polynomials that determine the closedness of the four-form $\tilde J_\rho\rho$.

\begin{proposition}\label{prop_5mas1}
The Lie algebra $A_{5,40}\oplus\R$ does not admit any complex structure with non-zero closed $(3,0)$-form.
\end{proposition}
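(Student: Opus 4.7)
The plan is to use the criterion stated at the end of Section~\ref{section-2} (just before the $3\oplus 3$ discussion): to rule out a complex structure with closed $(3,0)$-form on $\frg=A_{5,40}\oplus\R$, it suffices to show that for every closed $\rho\in\wedge^3\frg^*$ satisfying $d(\tilde J_\rho\rho)=0$, the scalar $\tilde\lambda(\rho)$ is non-negative. Since $\frg$ is only $5\oplus 1$ decomposable, Lemma~\ref{lema_Magnin} no longer gives useful restrictions (the only factor of dimension $\leq 3$ is the abelian $\R$), so we must attack $\tilde\lambda(\rho)$ and the closedness of $\tilde J_\rho\rho$ directly, as the authors indicate.

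First I would fix the volume form $\nu=e^{123456}$ and write a generic $\rho\in\wedge^3\frg^*$ as $\rho=\sum_{i<j<k}c_{ijk}\,e^{ijk}$. Imposing $d\rho=0$ using the structure equations of $A_{5,40}\oplus\R$ eliminates several $c_{ijk}$'s and leaves a parametrized family $\rho(a_1,\dots,a_N)$. Next, using formula \eqref{criterio}, I would compute the matrix of $\tilde J_\rho$ in the basis $\{e^j\}$ as polynomials in the $a_i$, and from it form
\begin{equation*}
\tilde\lambda(\rho)=\tfrac{1}{6}\,\mathrm{tr}(\tilde J_\rho^{\,2})\in\R[a_1,\dots,a_N].
\end{equation*}
Then I would compute the four-form $\tilde J_\rho\rho$ and impose $d(\tilde J_\rho\rho)=0$; each $e^{ijkl}$-coefficient gives a polynomial equation in the $a_i$. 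Call $I\subset\R[a_1,\dots,a_N]$ the ideal generated by these polynomials.

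The crux is then the algebraic step: one must show that the class of $\tilde\lambda(\rho)$ in $\R[a_1,\dots,a_N]/I$ is a non-negative function of the $a_i$'s on the real variety $V(I)$. In practice, I would compute a Gr\"obner basis of $I$ (with respect to a convenient monomial order adapted to the variables appearing in the quadratic trace), reduce $\tilde\lambda(\rho)$ modulo this basis, and try to recognize the reduced expression as a sum of squares, possibly multiplied by monomials whose sign is forced by other elements of $I$; this is the Positivstellensatz-style manipulation that is implicit in the phrase \emph{``relies on the class of $\tilde\lambda(\rho)$ modulo the ideal generated by the polynomials that determine the closedness of $\tilde J_\rho\rho$''}. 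Once $\tilde\lambda(\rho)\geq 0$ modulo $I$ is established, the case $\tilde\lambda(\rho)<0$ is incompatible with $d(\tilde J_\rho\rho)=0$, and no complex structure with closed $(3,0)$-form can exist.

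The main obstacle is the last step: the family of closed three-forms on $A_{5,40}\oplus\R$ has too many free parameters for a bare-handed analysis, so without a clever reduction one is forced into a computer-algebra calculation, and the success of the sum-of-squares decomposition depends on a judicious choice of variables and monomial order. If the direct Gr\"obner reduction proves unwieldy, a natural fallback is to split the analysis according to whether certain distinguished coefficients (in particular those of $\rho$ pairing the $\R$-factor $e^6$ with the $A_{5,40}$-part) vanish or not, handling degenerate strata separately and using the closedness relations to kill the remaining off-diagonal entries of $\tilde J_\rho^{\,2}$ in each stratum.
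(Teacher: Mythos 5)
Your plan coincides with the paper's proof: the authors write the generic closed three-form on $A_{5,40}\oplus\R$ in eleven parameters $a_1,\dots,a_{11}$, extract the nine polynomials $q_1,\dots,q_9$ whose vanishing is equivalent to $d(\tilde J_\rho\rho)=0$, and verify with Singular that $\tilde\lambda(\rho)\equiv(a_1a_{11})^2$ modulo the ideal $\langle q_1,\dots,q_9\rangle$, so $\tilde\lambda(\rho)\geq 0$ whenever $\tilde J_\rho\rho$ is closed. The only thing your proposal leaves open is the execution of that computer-algebra reduction, which in this case terminates in a single perfect square with no need for the case-splitting fallback you describe (that kind of stratification is needed only for $L_{6,2}$ in the indecomposable case).
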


\begin{proof}
Any generic closed three-form of the Lie algebra $A_{5,40}\oplus\R$ is given by
  \begin{align*}
  \rho= \, & \, a_1 e^{123} + a_2 e^{125} + a_3 e^{126} + a_4 (e^{135}-e^{124}) + a_5 e^{136} + a_6 e^{234} + a_7 (e^{134} + e^{235}) +  a_8 e^{236}\\[2pt]
    & + a_9 (e^{256} - e^{146}) + a_{10}(e^{246} + e^{356})  + a_{11} e^{456},
  \end{align*}
where $a_1,\ldots,a_{11}\in\R$. Taking the volume form $\nu=e^{123456}$, we consider the endomorphism $\tilde J_\rho$ defined by~\eqref{criterio}
and we find that the 4-form $d\big(\tilde J_\rho\rho\big)$ expresses as
    \begin{align*}
    d\big(\tilde J_\rho\rho\big)= \, & \, q_1e^{1234}+q_2e^{1235}+q_3e^{1245}+q_4(e^{1246}-e^{1356})+q_5e^{1256}+q_6e^{1345}+q_7(e^{1346}+e^{2356})\\[2pt]
    & + q_8e^{2345}+q_9e^{2346},
  \end{align*}
where $q_j:=q_j(a_1,\,\ldots,\,a_{11})$ denote the polynomials in the variables $a_1,\,\ldots,\,a_{11}$ given by
  \begin{align*}
    q_1& = -2(a_{10}a_2a_6+a_{10}a_4a_7-2a_4a_6a_9+2a_7^2a_9),\\[3pt]
    q_2& = -2(2a_{10}a_4^2+2a_{10}a_2a_7+a_2a_6a_9+a_4a_7a_9),\\[3pt]
    q_3& = -4a_{11}(a_4^2+a_2a_7),\\[3pt]
    q_4& = -2(a_{10}^2a_2+a_{11}a_4a_5+2a_{11}a_3a_7-a_{11}a_2a_8+a_{10}a_4a_9+2a_7a_9^2),\\[3pt]
    q_5& = -6(a_{11}a_3a_4-a_{11}a_2a_5-a_{10}a_2a_9+a_4a_9^2),\\[3pt]
    q_6& = 2a_{11}(a_2a_6+a_4a_7),\\[3pt]
    q_7& = -2(2a_{10}^2a_4-a_{11}a_3a_6+a_{11}a_5a_7-2a_{11}a_4a_8+a_{10}a_7a_9-a_6a_9^2),\\[3pt]
    q_8& = -4a_{11}(a_4a_6-a_7^2),\\[3pt]
    q_9& = -6(a_{11}a_5a_6+a_{10}^2a_7-a_{11}a_7a_8+a_{10}a_6a_9).
  \end{align*}

The expression of $\tilde \lambda(\rho)$ is a polynomial of degree 4 belonging to the polynomial ring $\R[a_1,\ldots,a_{11}]$ in the
variables $a_1,\ldots,a_{11}$. Let $\mathcal{I}=\langle q_1,\ldots,q_9\rangle$ be the ideal generated by the polynomials $q_1,\ldots,q_9$ above. We need to compute\footnote{
We used the computer software Singular, available at http:/\!/www.singular.uni-kl.de.} the class of $\tilde \lambda(\rho)$ modulo the ideal $\mathcal{I}$.  Concretely,
one gets
$$
\tilde \lambda(\rho)=
(a_1a_{11})^2-a_{10}q_2+a_8q_3-a_7q_4+\frac23a_6q_5+a_5q_6+\frac13a_2q_9.
$$

Now, the cancellation of $d\big(\tilde J_\rho\rho\big)$ clearly requires the cancellation of $q_1,\ldots,q_9$, but the latter implies that
$\tilde \lambda(\rho)=(a_1a_{11})^2\geq0$. Therefore, no complex structure with closed $(3,0)$-form exists on $A_{5,40}\oplus\R$.
\end{proof}

\subsection{The indecomposable case}\label{Sec_indecomposable}

In this section we study the existence of complex structures on the
indecomposable unimodular non-solvable Lie algebras $ L_{6,1}$, $L_{6,2}$ and $L_{6,4}$.

\begin{proposition}\label{resto_Lie_algebras}
The Lie algebras $L_{6,1}$, $L_{6,2}$ and $L_{6,4}$ do not admit complex structures with non-zero closed $(3,0)$-form.
\end{proposition}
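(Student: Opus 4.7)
The strategy is to mimic the proof of Proposition~\ref{prop_5mas1} (the argument for $A_{5,40}\oplus\R$), since for indecomposable Lie algebras the Magnin-type argument via summands is unavailable. Namely, for each of the three algebras $L_{6,1}$, $L_{6,2}$, $L_{6,4}$, I would fix a volume form $\nu=e^{123456}$ and parameterize a generic closed three-form
\begin{equation*}
\rho=\sum_j a_j\,\omega_j\in\wedge^3\frg^*,
\end{equation*}
where $\omega_j$ runs over a basis of $\ker d\cap\wedge^3\frg^*$ obtained by solving $d\rho=0$ in the structure equations \eqref{lista-indescomp}. Using \eqref{criterio} I would then compute the endomorphism $\tilde J_\rho$ and the scalar $\tilde\lambda(\rho)\in\R[a_1,\ldots,a_N]$ via $\tilde\lambda(\rho)\,\nu^{\otimes2}=\tfrac16\,{\rm tr}(\tilde J_\rho^{\,2})$.

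Next I would expand the four-form $d(\tilde J_\rho\rho)\in\wedge^4\frg^*$ and read off its coefficients $q_1,\ldots,q_k$ as polynomials in the $a_j$'s; these are precisely the conditions forcing the associated $(3,0)$-form $\Psi=\rho+i\tilde J_\rho\rho$ (once normalized) to be closed. Let $\mathcal{I}=\langle q_1,\ldots,q_k\rangle\subset\R[a_1,\ldots,a_N]$. The key computation is to reduce the class of $\tilde\lambda(\rho)$ modulo $\mathcal{I}$ (this can be carried out with a Gröbner basis in Singular, as in the proof of Proposition~\ref{prop_5mas1}) and to exhibit an explicit identity of the form
\begin{equation*}
\tilde\lambda(\rho)\;\equiv\;P(a_1,\ldots,a_N)\pmod{\mathcal{I}},
\end{equation*}
where $P$ is manifestly non-negative on $\R^N$ (a square, a sum of squares, or of the shape $(a_i a_j)^2$ as in the $A_{5,40}\oplus\R$ case). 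Whenever $d(\tilde J_\rho\rho)=0$, every $q_\ell$ vanishes, and hence $\tilde\lambda(\rho)=P\geq 0$; but an almost complex structure $J_\rho$ requires $\tilde\lambda(\rho)<0$, yielding the desired contradiction.

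The main obstacle is the algebraic one: for $L_{6,1}$ and $L_{6,4}$ the space of closed three-forms has fairly high dimension, so $\tilde\lambda(\rho)$ is a dense polynomial of degree four in many variables, and the ideal $\mathcal{I}$ has many generators of degree two or three. The non-trivial step is finding the right linear combination of the $q_\ell$'s (with polynomial coefficients) that, when subtracted from $\tilde\lambda(\rho)$, exhibits the remainder as a sum of squares; this is where a computer algebra system is essentially indispensable. I would treat the three algebras one by one, starting with $L_{6,2}$ (smallest generic closed locus) to fix the pattern of reduction, and then carry out the analogous computation for $L_{6,1}$ and $L_{6,4}$. The output in each case should be a short and verifiable identity that shows $\tilde\lambda(\rho)|_{\mathcal{V}(\mathcal{I})}\geq 0$, thereby ruling out all closed stable three-forms on these Lie algebras and completing the proof.
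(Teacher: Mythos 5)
Your overall strategy is exactly the one the paper uses, and for $L_{6,1}$ and $L_{6,4}$ it goes through precisely as you describe: the reduction of $\tilde\lambda(\rho)$ modulo $\mathcal{I}=\langle q_1,\ldots,q_9\rangle$ produces the manifestly non-negative remainder $(a_1a_{11})^2$ (the explicit combinations are recorded in Table~\ref{tabla_L61-L64}), so the vanishing of $d(\tilde J_\rho\rho)$ forces $\tilde\lambda(\rho)\geq 0$ and no complex structure with closed $(3,0)$-form can exist. Incidentally, your reason for starting with $L_{6,2}$ is off: all three algebras have an $11$-dimensional space of closed three-forms, so there is no ``smallest'' case.

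The genuine gap is at $L_{6,2}$, which is in fact the hardest of the three and the one where your ``pattern'' breaks. There the reduction of $\tilde\lambda(\rho)$ modulo $\mathcal{I}$ does \emph{not} yield a sum of squares: the remainder the authors obtain is
$\bigl(2a_2a_3a_6-2a_4a_5a_6+a_1a_6^2+4a_3a_4a_7+4a_5^2a_7-4a_4^2a_9-4a_2a_5a_9+4a_1a_7a_9+a_1^2a_{11}\bigr)a_{11}$,
which has no definite sign on $\R^{11}$, and no obviously better normal form is available. Your plan, as written, would therefore stall at this point: a single Gr\"obner-basis reduction followed by ``observe the remainder is non-negative'' is not enough. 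What the paper does instead is a branched elimination on the variety $\mathcal{V}(\mathcal{I})$: first noting $a_{11}\neq 0$ may be assumed, then splitting on whether $a_{10}^2-2a_{11}a_9$ vanishes, solving $q_5=q_7=q_8=0$ for $a_2,a_4,a_5$ (or factoring $q_7,q_8$ in the degenerate branch), and substituting back until in every branch $\tilde\lambda(\rho)$ becomes an explicit perfect square such as $(a_1a_{11})^2$. You should either carry out such a case analysis for $L_{6,2}$ or produce a genuine positivity certificate for $\tilde\lambda(\rho)$ on $\mathcal{V}(\mathcal{I})$ (which is a harder real-algebraic problem than ideal membership); without one of these, the proof for $L_{6,2}$ is incomplete.
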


\begin{proof}
The proofs for the Lie algebras $L_{6,1}$ and $L_{6,4}$ follow a similar argument to the proof of Proposition~\ref{prop_5mas1} for $A_{5,40}\oplus\R$, but
taking into account the corresponding values charted in Table~\ref{tabla_L61-L64}
in the Appendix for the forms $\rho$ and $d(\tilde J_\rho\rho)$, the polynomials $q_1,\ldots,q_9$ and $\tilde \lambda(\rho)$.

The proof of the statement for $L_{6,2}$ is less straightforward than the former cases, so we will give the details for this algebra. Any closed three-form $\rho$ on $L_{6,2}$ is expressed as
  \begin{align*}
  \rho= \, & \, a_1 e^{123} + a_2 e^{124} + a_3 e^{135} + a_4 (e^{234}-e^{125}) + a_5(e^{235}-e^{134})  + a_6 (e^{236}-e^{145})   \\[2pt]
    & +a_7 (e^{245}-2e^{126}) + a_8 (e^{146}+e^{256}) + a_9 (2e^{136} + e^{345}) + a_{10}(e^{346}-e^{156}) + a_{11} e^{456},
  \end{align*}
where $a_1,\ldots,a_{11}\in\R$.
A direct calculation shows that the 4-form $d\big(\tilde J_\rho\rho\big)$ is given by
  \begin{align*}
    d\big(\tilde J_\rho\rho\big) = \, & \,  q_1e^{1234}+q_2e^{1235}+q_3e^{1245}+q_4e^{1246}+q_5(e^{1256}-e^{2346})+q_6e^{1345} \\[2pt]
    & + q_7(e^{1346}-e^{2356})+q_8e^{1356}+q_9e^{2345},
  \end{align*}
  where $q_j:=q_j(a_1,\,\ldots,\,a_{11})$ denote the polynomials in the variables $a_1,\,\ldots,\,a_{11}$ given by
  \begin{align*}
    q_1 = \, & \, 2(2a_{10}a_4^2+2a_{10}a_2a_5+a_4a_6^2-2a_1a_{10}a_7-3a_5a_6a_7-2a_3a_7^2+a_2a_3a_8-a_4a_5a_8+a_1a_6a_8\\[0pt]
    &\ \ \,  -a_2a_6a_9-2a_4a_7a_9),\\[3pt]
    q_2 =  &   -2(a_{10}a_2a_3-a_{10}a_4a_5+a_1a_{10}a_6+a_5a_6^2+a_3a_6a_7+2a_3a_4a_8+2a_5^2a_8+3a_4a_6a_9\\[0pt]
    &\quad \quad  -2a_5a_7a_9+2a_1a_8a_9-2a_2a_9^2),\\[3pt]
    q_3 = \, & \,  2(2a_{11}a_4^2+2a_{11}a_2a_5+a_{10}a_2a_6-2a1a_{11}a_7+2a_{10}a_4a_7-2a_6^2a_7-3a_4a_6a_8+2a_5a_7a_8\\[0pt]
    &\ \ \,  -a_1a_8^2-8a_7^2a_9+4a_2a_8a_9),\\[3pt]
    q_4 = \, & \,  6(a_{11}a_2a_6+2a_{11}a_4a_7-2a_{10}a_7^2+a_{10}a_2a_8+a_6a_7a_8+a_4a_8^2),\\[3pt]
    q_5 = \, & \,  2(a_2(a_{10}^2-2a_{11}a_9)-a_{11}a_4a_6+4a_{11}a_5a_7-3a_{10}a_6a_7-a_{10}a_4a_8+a_6^2a_8+2a_5a_8^2-2a_7a_8a_9),\\[3pt]
    q_6 =  &   -2(a_1a_{10}^2-2a_{11}a_3a_4-2a_{11}a_5^2+3a_{10}a_5a_6+4a_{10}a_3a_7-a_3a_6a_8-2a_1a_{11}a_9+2a_{10}a_4a_9\\[0pt]
    &\quad \quad  -2a_6^2a_9+2a_5a_8a_9-8a_7a_9^2),\\[3pt]
    q_7 =  &   -2(2a_4(a_{10}^2-2a_{11}a_9)-a_{11}a_5a_6+a_{10}a_6^2+2a_{11}a_3a_7-a_{10}a_5a_8+a_3a_8^2-2a_{10}a_7a_9+3a_6a_8a_9),\\[3pt]
    q_8 =  &   -6(a_5(a_{10}^2-2a_{11}a_9)+a_{11}a_3a_6+a_{10}a_3a_8-a_{10}a_6a_9-2a_8a_9^2),\\[3pt]
    q_9 = \, & \,  2(a_{11}a_2a_3-a_{11}a_4a_5+a_1a_{11}a_6+2a_{10}a_4a_6+a_6^3-3a_{10}a_5a_7+a_1a_{10}a_8+2a_5a_6a_8\\[0pt]
    &\ \ \,  +a_3a_7a_8-a_{10}a_2a_9+4a_6a_7a_9+3a_4a_8a_9).
  \end{align*}

Moreover, for $\tilde \lambda(\rho)$ one gets
\begin{align*}
\tilde \lambda(\rho) = \, & \, (2a_2a_3a_6-2a_4a_5a_6+a_1a_6^2+4a_3a_4a_7+4a_5^2a_7-4a_4^2a_9-4a_2a_5a_9+4a_1a_7a_9+a_1^2a_{11})a_{11}\\[2pt]
&+\frac{3}{2}a_{10}q_1-2a_9q_3+\frac{5}{6}a_3q_4+a_5q_5-a_7q_6+\frac{1}{2}a_4q_7+\frac{2}{3}a_2q_8+\frac{a_6}{2}q_9.
\end{align*}

Next we distinguish two cases depending on the cancellation of the term $a_{10}^2-2a_{11}a_9$ appearing in the polynomials $q_5$, $q_7$ and $q_8$.
In the subsequent analysis
we assume that $a_{11}\neq0$, otherwise $d\big(\tilde J_\rho\rho\big)=0$ would imply that $\tilde \lambda(\rho)=0$.

\smallskip

\noindent {\rm (i)}  If $a_{10}^2-2a_{11}a_9\neq0$, then from the cancellation of $q_5,\,q_7$ and $q_8$ one gets
\begin{align*}
  a_2&=\frac{-a_{11}a_4a_6+4a_{11}a_5a_7-3a_{10}a_6a_7-a_{10}a_4a_8+a_6^2a_8+2a_5a_8^2-2a_7a_8a_9}{2a_{11}a_9-a_{10}^2},\\[5pt]
  a_4&=\frac{-a_{11}a_5a_6+a_{10}a_6^2+2a_{11}a_3a_7-a_{10}a_5a_8+a_3a_8^2-2a_{10}a_7a_9+3a_6a_8a_9}{2(2a_{11}a_9-a_{10}^2)},\\[5pt]
  a_5&=\frac{a_{11}a_3a_6+a_{10}a_3a_8-a_{10}a_6a_9-2a_8a_9^2}{2a_{11}a_9-a_{10}^2}.
\end{align*}
After substituting $a_2,\,a_4$ and $a_5$ in $q_4$ we obtain
\begin{equation*}
q_4=  \frac{3 (a_{10}^3+a_{11}^2 a_3-3 a_{10} a_{11} a_9) (a_{11} a_6^2-2 a_{10}^2 a_7+2 a_{10} a_6 a_8+4 a_{11} a_7 a_9+2 a_8^2 a_9)^2}{(a_{10}^2-2a_{11}a_9)^3}.
\end{equation*}
Now, in order to have $q_4=0$ we consider the following two cases:
\begin{itemize}
  \item $a_3 = \frac{3 a_{10} a_{11} a_9 -a_{10}^3}{a_{11}^2}$, but in this case we have
  \begin{equation*}
    \tilde\lambda(\rho)= \frac{(a_1 a_{11}^2+a_{11} a_6^2-2 a_{10}^2 a_7+2 a_{10} a_6 a_8+4 a_{11} a_7 a_9+2 a_8^2 a_9)^2}{a_{11}^2}\geq0;
  \end{equation*}
  \item $a_{11} a_6^2-2 a_{10}^2 a_7+2 a_{10} a_6 a_8+4 a_{11} a_7 a_9+2 a_8^2 a_9=0$, but this case again yields to a non-negative
  $\tilde\lambda(\rho)=(a_1a_{11})^2$.
\end{itemize}
Therefore, the condition $a_{10}^2-2a_{11}a_9\neq0$ always implies $\tilde\lambda(\rho)\geq0$.

\smallskip

\noindent {\rm (ii)}  Let $a_{10}^2-2a_{11}a_9=0$, so $a_9=\frac{a_{10}^2}{2a_{11}}$. In this case $q_8$ factorizes as
$q_8=\frac{3 (a_{10}^3-2 a_{11}^2 a_3) (a_{11} a_6+a_{10} a_8)}{a_{11}^2}$.
Hence, $q_8=0$ if and only if one of the following two cases holds:
\begin{itemize}
  \item $a_3=\frac{a_{10}^3}{2a_{11}^2}$, which implies that the polynomial $q_7$ factorizes as
  \begin{equation*}
    q_7=\frac{(a_{11} a_6+a_{10} a_8) (2 a_{11}^2 a_5-2 a_{10} a_{11} a_6-a_{10}^2 a_8)}{a_{11}^2}.
  \end{equation*}
  Now,
     if $a_{11} a_6+a_{10} a_8=0$ then we get that $q_6=\frac{(2 a_{11}^2 a_5+a_{10}^2 a_8)^2}{a_{11}^3}$ and
$\tilde\lambda(\rho)=\frac{a_1^2 a_{11}^6+(2 a_{11}^2 a_5+a_{10}^2 a_8)^2 (2 a_{11} a_7+a_8^2)}{a_{11}^4}$.
    Hence, $q_6=0$ implies $\tilde\lambda(\rho)=(a_1a_{11})^2$.

    On the other hand, when $2 a_{11}^2 a_5-2 a_{10} a_{11} a_6-a_{10}^2 a_8=0$ we arrive again at a non-negative $\tilde\lambda(\rho)=
    \frac{(a_1 a_{11}^3+(a_{11} a_6+a_{10} a_8)^2)^2}{a_{11}^4}$.
  \item  $a_6=-\frac{a_{10}a_8}{a_{11}}$, which implies that the polynomial $q_7$ factorizes as $q_7=\frac{(a_{10}^3-2 a_{11}^2 a_3) (2 a_{11} a_7+a_8^2)}{a_{11}^2}$. We can suppose that $a_{10}^3-2 a_{11}^2 a_3\not=0$ (otherwise we lie in the previous case). Then, $q_7=0$ if and only if $2 a_{11} a_7+a_8^2=0$, but the latter yields $\tilde\lambda(\rho)=(a_1a_{11})^2$.
\end{itemize}
Therefore, the condition $a_{10}^2-2a_{11}a_9=0$ also implies $\tilde\lambda(\rho)\geq0$, so the proof of the proposition is complete.
\end{proof}

\begin{remark}\label{L63}
More examples of complex structures with closed $(3,0)$-form can be found in the class of six-dimensional non-solvable Lie algebras if we allow
the Lie algebra to be non-unimodular. Indeed, consider the Lie algebra
$L_{6,3}$ (following the notation in \cite[Table 2]{F-Schu2}) with structure equations
$$
de^1=e^{23},\ de^2=2e^{12},\ de^3=-2e^{13},\ de^4=e^{14}+e^{25}+e^{46},\ de^5=-e^{15}+e^{34}+e^{56},\ de^6=0.
$$
Then, the endomorphism $J$ given by
\begin{equation*}
\begin{array}{lll}
  Je^1=\frac32e^2+\frac16e^3,& Je^2=-\frac13e^1-\frac19e^6,& Je^3=-3e^1+e^6,\\[1em]
  Je^4=\frac13e^5,& Je^5=-3e^4, & Je^6=\frac92e^2-\frac12e^3,
  \end{array}
\end{equation*}
defines an almost complex structure on $L_{6,3}$ such that the nonzero $(3,0)$-form $\Psi=(e^1-iJe^1)\wedge(e^2-iJe^2)\wedge(e^4-iJe^4)$ is closed.
\end{remark}

\begin{remark}\label{para-complex}
Our results have also applications to para-complex and closed $SL(3,\C)$ structures on unimodular non-solvable Lie groups.

First, we recall that similarly to the almost complex case, if a stable 3-form $\rho$ satisfies $\lambda(\rho)>0$ then the endomorphism $J_\rho$ defines a \emph{para-complex} structure on $\frg$, i.e. $J_\rho^2={\rm Id}$
and the eigenspaces for the eigenvalues $\pm 1$ are three-dimensional (see~\cite{Hitchin} for more details).
The form $\rho + {\rm e}\,J_\rho\rho$, where ${\rm e}^2=1$, is a $(3,0)$-form with respect to $J_\rho$.
If this $(3,0)$-form is closed, then the corresponding torsion tensor $N_{J_\rho}$ vanishes identically and the para-complex structure is integrable.
It is clear that any decomposable Lie algebra $\frg=\frg_1\oplus\frg_2$, with $\dim \frg_1=\dim \frg_2=3$, has para-complex structures with closed $(3,0)$-form.
The Lie algebras $A_{5,40}\oplus\R$, $L_{6,1}$, $L_{6,2}$, $L_{6,3}$ and $L_{6,4}$ also admit this type of structures.
In fact, from the proofs of the Propositions~\ref{prop_5mas1} and~\ref{resto_Lie_algebras}, and from Table~\ref{tabla_L61-L64} in the Appendix, it is enough to take $a_1=a_{11}=1$ and the other coefficients $a_j$ equal to zero.
Note that for $L_{6,3}$ the closed 3-form $\rho=e^{123}+e^{456}$ defines a para-complex structure for which $J_\rho\rho$ is also closed (see Remark~\ref{L63} for the structure equations of $L_{6,3}$).

In relation to $\text{SL}(3,\C)$ structures, we recall that an oriented six-dimensional differentiable manifold $M$ admits an $\text{SL}(3,\C)$-structure if its frame
bundle can be reduced to $\text{SL}(3,\C)$. Alternatively, such a structure is defined  by a stable
three-form $\rho\in\Omega^3(M)$ inducing an almost complex structure $J_\rho$, that is, $\lambda(\rho)<0$.
The $\text{SL}(3,\C)$-structure is called \emph{closed} if $d\rho=0$, and in this case
the four-form $d(J_\rho\rho)$ has bidegree $(2,2)$ with respect to $J_\rho$.
Notice that, being $\rho$ closed, the integrability of $J_\rho$ is equivalent to $d(J_\rho\rho)=0$.
For nilpotent Lie algebras, Fino and Salvatore classify in~\cite{FS} the closed $\text{SL}(3,\C)$-structures for which $d(J_\rho\rho)$ is a non-zero (semi-)positive $(2, 2)$-form.
The results in this section could be of interest in relation to the study of such structures in the non-solvable setting.
\end{remark}

\subsection{Classification results}\label{resultados-classif}

The results obtained in Sections~\ref{Sec_decomposable} and~\ref{Sec_indecomposable} are summed up in the following theorem.

\begin{theorem}\label{clasif-non-solvable}
Let $\frg$ be an unimodular non-solvable Lie algebra of dimension 6. Then $\frg$ admits a complex structure with a non-zero closed $(3,0)$-form if and only if it is isomorphic to $\mathfrak{so}(3,1)$.
\end{theorem}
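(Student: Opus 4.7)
The plan is to simply assemble the pieces already established in the preceding subsections, since Theorem~\ref{clasif-non-solvable} is essentially a summary statement. The starting point is the full list of 6-dimensional unimodular non-solvable Lie algebras, which splits into the twelve decomposable algebras of~\eqref{lista-descomp} and the four indecomposable algebras of~\eqref{lista-indescomp}. I would verify the two directions of the biconditional separately.

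For the sufficiency direction, I would invoke Example~\ref{so(3,1)}, which exhibits an explicit complex structure $J$ on $\mathfrak{so}(3,1)$ coming from the complex Lie algebra structure of $\mathfrak{sl}(2,\C)$. Since $J$ is complex parallelizable, the $(3,0)$-form $\Psi=\omega^{123}$ built from the complex parallelism is automatically closed; equivalently, this corresponds to the closed stable three-form $\rho=e^{123}-e^{156}+e^{246}-e^{345}$.

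For the necessity direction, I would dispatch the non-$\mathfrak{so}(3,1)$ candidates by invoking the three earlier propositions: Proposition~\ref{prop_sl2R} eliminates the eleven $3\oplus 3$ decomposable algebras $\mathfrak{sl}(2,\R)\oplus\frg_2$ and $\mathfrak{so}(3)\oplus\frg_2$ (for $\frg_2\in\{\R^3,\frh_3,\fre(1,1),\fre(2),\mathfrak{sl}(2,\R),\mathfrak{so}(3)\}$), Proposition~\ref{prop_5mas1} eliminates the $5\oplus 1$ decomposable algebra $A_{5,40}\oplus\R$, and Proposition~\ref{resto_Lie_algebras} eliminates the three remaining indecomposable algebras $L_{6,1}$, $L_{6,2}$ and $L_{6,4}$. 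Together these exhaust the list of~\eqref{lista-descomp}--\eqref{lista-indescomp} except for $\mathfrak{so}(3,1)$, so the classification follows.

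Since the heavy lifting is already done, the only potential obstacle is bookkeeping: one must be confident that the lists~\eqref{lista-descomp} and~\eqref{lista-indescomp} are genuinely exhaustive (which relies on citing the classifications in~\cite{Schu,F-Schu1,F-Schu2} together with the observation that unimodularity descends to each direct summand). No further computation is needed at this stage.
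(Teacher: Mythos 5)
Your proposal is correct and follows exactly the route the paper takes: the theorem is stated there as a summary of Propositions~\ref{prop_sl2R}, \ref{prop_5mas1} and~\ref{resto_Lie_algebras} for the necessity direction, together with the explicit complex parallelizable structure of Example~\ref{so(3,1)} for sufficiency, all resting on the exhaustive lists~\eqref{lista-descomp} and~\eqref{lista-indescomp} drawn from~\cite{Schu,F-Schu1,F-Schu2}. Nothing is missing.
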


For completeness, in the following corollaries we collect the result obtained in Theorem~\ref{clasif-non-solvable} together with other known results.
Firstly, from Salamon's classification in the nilpotent case \cite{S} and the classification obtained in \cite[Thm. 2.8]{FOU} for solvable Lie algebras, we get

\begin{corollary}\label{classification}
Let $\frg$ be an unimodular Lie algebra of dimension 6. Then, $\frg$ admits a complex structure with a non-zero closed $(3,0)$-form if and only if it is isomorphic to one in the following list:
$$
\begin{array}{rl}
\frh_{1} \!\!&\!\!= (0^6),\\[2pt]
\frh_{2} \!\!&\!\!= (0^4,12,34),\\[2pt]
\frh_{3} \!\!&\!\!= (0^5,12+34),\\[2pt]
\frh_{4} \!\!&\!\!= (0^4,12,14+23),\\[2pt]
\frh_{5} \!\!&\!\!= (0^4,13+42,14+23),\\[2pt]
\frh_{6} \!\!&\!\!= (0^4,12,13),\\[2pt]
\frh_{7} \!\!&\!\!= (0^3,12,13,23),\\[2pt]
\frh_{8} \!\!&\!\!= (0^5,12),\\[2pt]
\frh_{9} \!\!&\!\!= (0^4,12,14+25),\\[2pt]
\frh_{10} \!\!&\!\!= (0^3,12,13,14),\\[2pt]
\frh_{11} \!\!&\!\!= (0^3,12,13,14+23),\\[2pt]
\frh_{12} \!\!&\!\!= (0^3,12,13,24),\\[2pt]
\frh_{13} \!\!&\!\!= (0^3,12,13+14,24),\\[2pt]
\frh_{14} \!\!&\!\!= (0^3,12,14,13+42),
\end{array}
\quad
\begin{array}{rl}
\frh_{15} \!\!&\!\!= (0^3,12,13+42,14+23),\\[2pt]
\frh_{16} \!\!&\!\!= (0^3,12,14,24),\\[2pt]
\frh^-_{19} \!\!&\!\!= (0^3,12,23,14-35),\\[2pt]
\frh^+_{26} \!\!&\!\!= (0^2,12,13,23,14+25),\\[2pt]
\frg_1 \!\!&\!\!= (15,-25,-35,45,0,0) ,\\[2pt]
\frg_2^{\alpha} \!\!&\!\!= (\alpha\!\cdot\!\!15\!+\!25,-\!15\!+\!\alpha\!\cdot\!\!25,-\alpha\!\cdot\!\!35\!+\!45,-\!35\!-\!\alpha\!\cdot\!\!45,0,0),
\\[2pt]
\frg_3 \!\!&\!\!= (0,-13,12,0,-46,-45) ,\\[2pt]
\frg_4 \!\!&\!\!= (23,-36,26,-56,46,0) ,\\[2pt]
\frg_5 \!\!&\!\!= (24+35,26,36,-46,-56,0) ,\\[2pt]
\frg_6 \!\!&\!\!= (24+35,-36,26,-56,46,0) ,\\[2pt]
\frg_7 \!\!&\!\!= (24+35,46,56,-26,-36,0) ,\\[2pt]
\frg_8 \!\!&\!\!= (16-25,15+26,-36+45,-35-46,0,0) ,\\[2pt]
\frg_9 \!\!&\!\!= (45,15+36,14-26+56,-56,46,0) ,\\[2pt]
\mathfrak{so}(3,\!1) \!\!&\!\!= (23\!-\!56,-13\!+\!46,12\!-\!45,26\!-\!35,-16\!+\!34,15\!-\!24).
\end{array}
$$
\end{corollary}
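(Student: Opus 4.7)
The plan is to obtain Corollary~\ref{classification} as the union of three classification results, one for each possible type of a six-dimensional unimodular Lie algebra $\frg$, since by the Levi decomposition (together with unimodularity forcing a trivial solvable radical only in trivial cases) every such $\frg$ is either nilpotent, solvable but non-nilpotent, or non-solvable, and these three cases are mutually exclusive. So my task reduces to identifying, inside each of the three classes, exactly which Lie algebras admit a complex structure with non-zero closed $(3,0)$-form, and then verifying that the union of the three lists is precisely the one displayed.

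For the nilpotent case, I invoke Salamon's theorem \cite{S}, which has two sides to it: first, it lists the $34$ isomorphism classes of six-dimensional nilpotent Lie algebras that admit any invariant complex structure, and second, it shows that on a nilpotent Lie algebra every complex structure $J$ admits a non-zero closed $(n,0)$-form. Thus, in the nilpotent setting, the subclass of algebras with a closed $(3,0)$-form coincides with the subclass of algebras admitting a complex structure at all, and one simply reads off Salamon's list; this produces exactly the $\frh_1,\ldots,\frh_{16},\frh_{19}^-,\frh_{26}^+$ entries of the statement. For the solvable but non-nilpotent case, I apply \cite[Thm. 2.8]{FOU}, whose statement is a finite list of unimodular non-nilpotent solvable Lie algebras admitting a complex structure with closed $(3,0)$-form: this reproduces the $\frg_1,\frg_2^\alpha,\ldots,\frg_9$ entries. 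For the non-solvable case, I apply Theorem~\ref{clasif-non-solvable}, which was established in the earlier sections and yields $\mathfrak{so}(3,1)$ as the unique such algebra.

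To finish, I observe that the three lists are disjoint (nilpotency, solvability-without-nilpotency and non-solvability are mutually exclusive conditions) and that their union is the full list displayed in the corollary. Conversely, each algebra in that list has been shown in the respective cited result to admit a complex structure with closed $(3,0)$-form, so the classification is genuinely exhaustive in both directions.

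The main obstacle is not the assembly argument itself, which is a one-line case split, but rather making sure no algebra slips through the cracks when translating between the various naming conventions used across \cite{S}, \cite{FOU} and the tables of \cite{Schu,F-Schu1,F-Schu2}; in particular, one must check that the unimodular algebras excluded by the hypothesis (such as $L_{6,3}$ of Remark~\ref{L63}) are correctly identified as non-unimodular and hence legitimately absent from the list, and that each $\frh_i$ and $\frg_j$ appearing in the corollary is indeed unimodular (which is automatic in the nilpotent case and is verified on a case-by-case basis from the structure equations in the solvable case). Once these bookkeeping points are settled, the proof of Corollary~\ref{classification} is immediate from the combination of Salamon's theorem, \cite[Thm. 2.8]{FOU} and Theorem~\ref{clasif-non-solvable}.
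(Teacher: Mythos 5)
Your proposal is correct and follows essentially the same route as the paper, which likewise obtains the corollary by combining Salamon's nilpotent classification, \cite[Thm.~2.8]{FOU} for the unimodular solvable case, and Theorem~\ref{clasif-non-solvable} for the non-solvable case. One small factual slip: Salamon's list of six-dimensional nilpotent Lie algebras admitting a complex structure has $18$ entries (matching the $\frh_1,\ldots,\frh_{16},\frh_{19}^-,\frh_{26}^+$ in the statement), while $34$ is the total number of isomorphism classes of six-dimensional nilpotent Lie algebras; this does not affect the argument.
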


Here $\alpha\geq 0$ is any non-negative real number. The Lie algebras in the list are pairwise non-isomorphic. The Lie algebras $\frh_k$ are nilpotent, $\frg_l$ are solvable, and $\mathfrak{so}(3,\!1)$ is the only semi-simple Lie algebra in the classification.
On the other hand, the decomposable Lie algebras are the following:
$\frh_1$ ($1\oplus \cdots \oplus 1$), $\frh_2, \frg_3$ ($3\oplus 3$), $\frh_3, \frh_6, \frh_9, \frh_{16},\frg_1,\frg_2^{\alpha}$ ($1\oplus 5$), and $\frh_8$
($1\oplus 1\oplus 1\oplus 3$).

\medskip

Let $X$ be a complex manifold with $\dim_{\C}X=n$. A Hermitian metric $F$ on $X$ is said to be \emph{balanced} if $dF^{n-1}=0$. Important
aspects of these metrics were first investigated by Michelsohn~\cite{Michelsohn}. The balanced Hermitian geometry plays a central role in heterotic string theory, as the next sections show.

We recall that any left-invariant Hermitian metric $F$ on a complex Lie group is balanced, so the Lie algebra $\mathfrak{so}(3,\!1)$ admits balanced Hermitian structures. The $6$-dimensional nilpotent, resp. unimodular solvable with closed $(3,0)$-form, Lie algebras admitting balanced Hermitian structures are classified in \cite[Thm. 26]{Ug}, resp. in \cite[Thm. 4.5]{FOU}. As a second consequence of the results obtained in the previous sections we have

\begin{corollary}\label{balanced-classification}
Let $\frg$ be an unimodular Lie algebra of dimension 6. Then, $\frg$ admits a complex structure with a non-zero closed $(3,0)$-form having balanced metrics if and only if it is isomorphic to $\frh_{1}$, $\frh_{2}$, $\frh_{3}$, $\frh_{4}$, $\frh_{5}$, $\frh_{6}$, $\frh^-_{19}$, $\frg_1$, $\frg_2^{\alpha} (\alpha\geq 0)$, $\frg_3$, $\frg_5$, $\frg_7$, $\frg_8$, or $\mathfrak{so}(3,\!1)$.
\end{corollary}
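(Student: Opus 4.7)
The plan is to combine Corollary~\ref{classification} with existing classification results for the nilpotent and the solvable cases, and to handle the non-solvable case directly.

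First, by Corollary~\ref{classification} the list of $6$-dimensional unimodular Lie algebras $\frg$ that admit a complex structure with non-zero closed $(3,0)$-form is already fully described, splitting into the nilpotent algebras $\frh_k$, the solvable algebras $\frg_l$ and the single semisimple algebra $\mathfrak{so}(3,1)$. So it only remains to decide, for each entry of that list, whether some $J$ with $d\Psi=0$ admits an $F$ with $dF^2=0$.

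For the nilpotent entries, I would simply invoke \cite[Thm.~26]{Ug}, which enumerates the $6$-dimensional nilpotent Lie algebras carrying a complex structure with balanced Hermitian metric; intersecting that list with the $\frh_k$ appearing in Corollary~\ref{classification} yields precisely $\frh_1,\frh_2,\frh_3,\frh_4,\frh_5,\frh_6,\frh_{19}^-$. Similarly, for the solvable entries I would invoke \cite[Thm.~4.5]{FOU}, which lists exactly those unimodular solvable Lie algebras admitting a complex structure with non-zero closed $(3,0)$-form together with a balanced Hermitian metric, giving $\frg_1,\frg_2^\alpha\ (\alpha\geq 0),\frg_3,\frg_5,\frg_7,\frg_8$. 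Thus the bulk of the statement is a quotation from the literature once Corollary~\ref{classification} is in hand.

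The remaining case is the semisimple one, $\mathfrak{so}(3,1)$. By Theorem~\ref{clasif-non-solvable} this is the only unimodular non-solvable Lie algebra in dimension six carrying a closed $(3,0)$-form, and by Example~\ref{so(3,1)} such a complex structure $J$ realizes $\mathfrak{so}(3,1)$ as the underlying real Lie algebra of the complex Lie algebra $\mathfrak{sl}(2,\C)$, i.e.\ $J$ is complex parallelizable. On a complex parallelizable manifold any left-invariant Hermitian metric $F$ is automatically balanced (as observed in the paragraph preceding the corollary), because $F^{n-1}$ is a linear combination of wedges of $(1,0)$-forms and their conjugates whose differentials are of pure type, and in $\mathfrak{sl}(2,\C)$ the Chern torsion vanishes so that $dF^{n-1}=0$ follows from $d\omega^i\in\Lambda^{2,0}$. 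Concretely, taking any positive-definite $F=\tfrac{i}{2}\sum h_{jk}\,\omega^j\wedge\bar\omega^k$ with $(h_{jk})$ Hermitian constant and using the structure equations \eqref{complex-para-str} gives $dF^2=0$ by a direct check.

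Putting the three cases together yields the list stated in the corollary, and the converse inclusion (that none of the Lie algebras outside the list can admit such a structure) is already ensured by Corollary~\ref{classification} together with \cite[Thm.~26]{Ug} and \cite[Thm.~4.5]{FOU}. I expect no technical obstacle here beyond consulting the cited tables; the only subtle point is checking that the ``balanced'' requirement in those references coincides with our definition $dF^{n-1}=0$, which is immediate in complex dimension three.
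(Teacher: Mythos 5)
Your proposal is correct and follows essentially the same route as the paper: the paper also obtains this corollary by combining Corollary~\ref{classification} with the classifications of balanced structures in \cite[Thm.~26]{Ug} (nilpotent case) and \cite[Thm.~4.5]{FOU} (solvable case), and by recalling that any left-invariant Hermitian metric on the complex parallelizable $\mathfrak{so}(3,1)\cong\mathfrak{sl}(2,\C)$ is automatically balanced. No gaps.
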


Concerning the existence of lattices (of maximal rank), the connected and simply connected nilpotent Lie group $H_k$ corresponding to the Lie algebra $\frh_k$ admits a lattice by the well-known Malcev theorem.
Moreover, by \cite[Prop. 2.10]{FOU}, the connected and simply connected Lie group $G_l$ corresponding to the solvable Lie algebra $\frg_l$ admits a lattice for any $l\not=2$, whereas for $l=2$
there exists a countable number of distinct $\alpha$'s, including $\alpha=0$, for
which the Lie group $G_2^\alpha$ corresponding to $\frg_2^{\alpha}$ admits a lattice.
For the case of $\mathfrak{so}(3,\!1)$, it is well-known that a lattice exists.
Hence, we get the following result for compact homogeneous spaces in six dimensions:

\begin{corollary}\label{quotient-balanced}
Let $M=\Gamma\backslash G$ be a six-dimensional compact manifold defined as the quotient of a simply connected Lie group $G$ by a lattice $\Gamma$.
Suppose that $M$ possesses an invariant complex structure $J$ with non-zero closed $(3,0)$-form  admitting a balanced metric $F$.
Then, the Lie algebra $\frg$ of $G$
is isomorphic to $\frh_{1},\ldots,\frh_{6}$, $\frh^-_{19}$, $\frg_1$, $\frg_2^{\alpha}$ for some $\alpha\geq 0$, $\frg_3$, $\frg_5$, $\frg_7$, $\frg_8$, or $\mathfrak{so}(3,\!1)$.
\end{corollary}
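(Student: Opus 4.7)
The plan is to deduce the statement as an almost immediate consequence of Corollary~\ref{balanced-classification} together with the lattice existence results collected in the paragraph preceding the statement. Essentially the argument has two ingredients: translating the data on the compact quotient $M=\Gamma\backslash G$ into data on the Lie algebra $\frg$, and noting that every Lie algebra appearing in Corollary~\ref{balanced-classification} actually carries a cocompact lattice.

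First I would observe that since $M=\Gamma\backslash G$ is compact, Milnor's theorem \cite{Milnor} forces the Lie algebra $\frg$ of $G$ to be unimodular. Under the standard identification of invariant tensors on $M$ with left-invariant tensors on $G$, i.e.\ with elements of the appropriate exterior or tensor powers of $\frg^*$, the hypotheses transfer directly to $\frg$: the invariant complex structure $J$ with non-zero closed $(3,0)$-form descends to such a complex structure on $\frg$, and the invariant balanced Hermitian metric $F$ (balanced in six dimensions means $dF^2=0$) gives an invariant balanced metric on $\frg$ compatible with $J$. Applying Corollary~\ref{balanced-classification} to $\frg$ then forces $\frg$ to be isomorphic to one of $\frh_1,\ldots,\frh_6$, $\frh_{19}^-$, $\frg_1$, $\frg_2^{\alpha}$ for some $\alpha\geq 0$, $\frg_3$, $\frg_5$, $\frg_7$, $\frg_8$, or $\mathfrak{so}(3,1)$.

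To close the proof one must check that each algebra appearing in this list is indeed realized by a simply connected Lie group admitting a lattice of maximal rank, so that the ``if'' direction of the classification is not vacuous. This is precisely what has been recalled in the paragraph preceding the statement: Malcev's theorem covers the nilpotent algebras $\frh_k$; \cite[Prop. 2.10]{FOU} gives lattices for the solvable algebras $\frg_l$ with $l\neq 2$, and exhibits a countable set of admissible parameters $\alpha\geq 0$ (including $\alpha=0$) yielding lattices in $G_2^{\alpha}$; and the classical existence of cocompact lattices in $\mathrm{SL}(2,\mathbb{C})$ handles $\mathfrak{so}(3,1)$. No algebra in the list need therefore be discarded.

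There is no substantive obstacle here: the statement is in essence a bookkeeping corollary of the more substantial Corollary~\ref{balanced-classification}. The only mild subtlety is ensuring that one may transfer the balanced metric $F$ to $\frg$ itself, which is automatic in the homogeneous setting where the relevant structures are assumed invariant; combined with Milnor's unimodularity criterion and the recalled existence of lattices, the result follows immediately.
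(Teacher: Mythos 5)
Your argument follows essentially the same route as the paper: unimodularity via Milnor, transfer of the invariant structures to $\frg$, application of Corollary~\ref{balanced-classification}, and the recalled lattice existence results (the latter being strictly unnecessary for the stated one-way implication, but harmless). There is, however, one small but genuine gap: the statement assumes only that $(M,J)$ admits \emph{a} balanced metric $F$, not an invariant one --- note the contrast with Theorem~\ref{main-instanton}, where invariance of $F$ is hypothesized explicitly. You silently upgrade $F$ to an invariant metric when you write that the transfer to $\frg$ ``is automatic in the homogeneous setting where the relevant structures are assumed invariant.'' To apply Corollary~\ref{balanced-classification}, which is a statement at the Lie algebra level, you must first produce an \emph{invariant} balanced metric from the given one; this is done by the standard symmetrization (averaging) procedure over the compact quotient, which preserves the balanced condition. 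The paper records exactly this point in the sentence following the corollary. With that one additional step your proof is complete.
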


We recall that such homogeneous spaces are Kähler only when $\frg\cong \frh_{1}$
or $\frg_2^0$ (see Proposition~\ref{solv-g2-0-balanced-real-basis} below for more details). We also recall that the existence of a balanced metric on $(M,J)$ implies the existence of an invariant one (by symmetrization).

\section{The Hull-Strominger system on compact balanced homogeneous spaces}\label{HS-homogeneous}

\noindent
The heterotic superstring
background with non-zero torsion was investigated, independently, by A. Strominger~\cite{Str} and C. Hull~\cite{Hull}, giving rise to a complicated system of partial differential equations.
Their approach allowed to extend the initial proposal for a superstring compactification given in \cite{CHSW} to a complex (non necessarily K\"ahler) setting with trivial canonical bundle.
In dimension six, the system requires the geometric inner space $X$ to be a compact
complex
conformally balanced manifold with holomorphically trivial canonical bundle, which is equipped with an instanton
compatible with the Green-Schwarz anomaly cancellation condition:
\begin{equation}\label{anomaly-canc}
dT=2\pi^2 \alpha' \big(p_1(\nabla)-p_1(A)\big).
\end{equation}
This equality, also known as the Bianchi identity, is an equation of 4-forms, where
$T$ is the torsion of the (Strominger-)Bismut connection of the conformally balanced metric, $\alpha'$ is a real constant (called the slope parameter), and
$p_1(\nabla)$, resp. $p_1(A)$, denotes the 4-form representing
the first Pontrjagin
class of some metric connection $\nabla$, resp. of the instanton $A$.

More concretely, let $(M,J,g,\Psi)$ be a compact manifold of real dimension 6 endowed with a Hermitian structure $(J,g)$ and a non-vanishing holomorphic $(3,0)$-form $\Psi$. Denoting by $F$ the fundamental form, the torsion 3-form is $T=JdF$.
The Hull-Strominger system is given by
\begin{equation}\label{HS}
\begin{array}{ll}
& d(\parallel\!\!\Psi\!\!\parallel_F  \!F^2)= 0,\\[7pt]
& \Omega^{A}\wedge F^2=0,\quad (\Omega^{A})^{0,2}=(\Omega^{A})^{2,0}=0,\\[6pt]
& dT-\frac{\alpha'}{4}\left({\rm tr}\, \Omega\wedge\Omega -{\rm tr}\, \Omega^A\wedge \Omega^A
\right)=0.
\end{array}
\end{equation}
The first equation (the conformally balanced condition) is a reformulation, due to Li and Yau \cite{Li-Yau}, of
the so-called dilatino and gravitino equations of the system. It implies the existence of a balanced Hermitian metric $\hat{F}$ on $(M,J)$ simply
by modifying $F$ conformally as $\hat{F}=\parallel\!\!\Psi\!\!\parallel_F^{1/2}  \!F$.
The second equation, also known as the gaugino equation, is the Hermitian-Yang-Mills equation for the connection $A$, where $\Omega^A$ denotes its curvature. 
The third equation is the anomaly cancellation equation \eqref{anomaly-canc} after taking
${1\over 8\pi^2} {\rm tr}\, \Omega\wedge\Omega$ as the 4-form representing the class $p_1$, where $\Omega$ is the curvature of the connection.

Li and Yau \cite{Li-Yau} found the first non-K\"ahler solutions to the Hull-Strominger system on a K\"ahler Calabi-Yau manifold. Based on a construction by Goldstein and Prokushkin in
\cite{GP}, Fu and Yau first proved the existence of solutions on non-K\"ahler Calabi-Yau inner spaces given as a
$\mathbb{T}^2$-bundle over a K3 surface~\cite{Fu-Yau}.
Recently, the Fu-Yau solution 
is generalized to torus bundles over K3 orbifolds in \cite{FGV}.

An important result by Ivanov \cite{Iv} (see also \cite{FIUV09}) states that a solution of the Hull-Strominger system satisfies in addition the heterotic equations
of motion if and only if the connection $\nabla$ in the
tangent bundle is Hermitian-Yang-Mills, i.e.
\begin{equation}\label{HYM}
\Omega\wedge F^2=0,\quad \Omega^{0,2}=\Omega^{2,0}=0.
\end{equation}
Homogeneous solutions to the system \eqref{HS}-\eqref{HYM} were first found in \cite{FIUV09} on a nilmanifold, and more recently on a solvmanifold and on the quotient of SL(2,$\C$) in \cite{OUV17} (see also \cite{FeiYau}).
On the other hand, the Li-Yau solutions to the Hull-Strominger system given in \cite{Li-Yau} were further extended in~\cite{A-Gar} by a perturbative method for certain K\"ahler Calabi-Yau threefolds with stable holomorphic vector bundle
to prove existence of solutions to the heterotic equations of motion.
New examples of solutions of the system \eqref{HS}-\eqref{HYM} on non-Kähler torus bundles over K3 surfaces 
are constructed by Garcia-Fernandez in \cite{MGarcia-Crelle}, including applications to the construction of T-dual solutions.
More recently, with the ansatz that the metric connection $\nabla$ in the
tangent bundle is Hermitian-Yang-Mills,
a family of Futaki invariants obstructing the existence of solutions of the Hull-Strominger system in a given balanced class $\frb$ is found in \cite{GFGM}.

The results in \cite{Li-Yau,Fu-Yau,FGV} mentioned above require $\nabla$ to be taken as the Chern connection $\nabla^c$. In addition to $\nabla^c$, other metric connections
proposed for the anomaly cancellation equation 
are the Strominger-Bismut connection $\nabla^+$,
the Levi-Civita connection $\nabla^{LC}$ or the Hull connection $\nabla^-$.
The physical and geometrical meaning of different choices for $\nabla$ is discussed by De la Ossa and Svanes in \cite{OS}.
In \cite{FeiYau} the canonical 1-parameter family
of Hermitian connections $\nabla^{\tau}$
found by Gauduchon \cite{Gau} is considered.
The family $\nabla^{\tau}$ contains the Chern connection ($\tau=1$)
and the Bismut connection ($\tau=-1$).
Furthermore, all the previous connections can be gathered in a plane of \emph{metric} linear connections $\nre$ which were introduced and studied in \cite{OUV17},
so that $\nabla^{LC}=\nabla^{0,0}$, $\nabla^{\pm}=\nabla^{\pm\frac12,0}$,
$\nabla^c=\nabla^{0,\frac12}$, and where the Gauduchon connections $\nabla^{\tau}$ correspond to the line $\rho=\frac12-\varepsilon$
(where $\tau=1-4\varepsilon$).
The covariant derivative of this two-parameter family of connections is also considered in \cite{COMS} to study the geometry of a fibration ${\mathbb X}$ over the moduli space of heterotic structures ${\mathcal M}$, where the fibres are 3-folds $X$ together with their metrics and complex structures.

\smallskip

We recall the definition of the connections $\nre$.
Let $(M, J, g)$ be a Hermitian manifold.
A linear connection $\nabla$ defined on the tangent bundle $TM$ is called \textit{Hermitian} if $\nabla g=0$ and $\nabla J=0$, i.e. both the metric and the complex structure are parallel.
For any $(\varepsilon, \rho) \in \mathbb{R}^2$, the connection $\nre$ is defined as
\begin{equation}\label{family_2par}
g(\nabla^{\varepsilon,\rho}_XY, Z) = g(\nabla^{LC}_X Y, Z) + \varepsilon\,T(X,Y,Z) + \rho\,C(X,Y,Z), \quad\ X,Y,Z\in \mathfrak{X}(M)
\end{equation}
where $C(\cdot, \cdot, \cdot)=dF(J\cdot, \cdot, \cdot)$ denotes the torsion of the Chern connection $\nabla^c$, and
$T(\cdot, \cdot, \cdot)=JdF(\cdot, \cdot, \cdot)$
stands for
the torsion $3$-form of the Bismut connection $\nabla^+$. Here $F(\cdot, \cdot) = g(\cdot, J\cdot)$ is the fundamental $2$-form.

\begin{proposition}\cite[Prop. 2.1]{OUV17}
Let $(M,J,g)$ be a Hermitian manifold. For every $(\varepsilon, \rho) \in \mathbb{R}^2$, the connection $\nre$
satisfies the following properties:
$$
\nre g=0,\quad\qquad \nre J=(1-2\varepsilon-2\rho)\,\nabla^{LC}\!J.
$$
Therefore, if $(M,J,g)$ is not K\"ahler then, $\nre$ is Hermitian if and only if $\rho=\frac12-\varepsilon$.
\end{proposition}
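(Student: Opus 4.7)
The plan is to write $\nre = \nabla^{LC} + D^{\varepsilon,\rho}$, where $D^{\varepsilon,\rho}$ is the difference tensor defined by $g(D^{\varepsilon,\rho}_X Y, Z) = \varepsilon\,T(X,Y,Z) + \rho\,C(X,Y,Z)$, and to establish the two identities separately before reading off the characterisation of Hermitian connections within the family.

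For the metric compatibility $\nre g = 0$, the Levi-Civita contribution to $(\nre_X g)(Y,Z)$ vanishes identically, so it suffices to show that $D^{\varepsilon,\rho}(X,Y,Z)$ is skew-symmetric in $(Y,Z)$. Both contributions have this property: $T=JdF$ is a $3$-form and hence totally skew, while $C(X,Y,Z)=dF(JX,Y,Z)$ is skew in $(Y,Z)$ because $dF$ is a $3$-form. This takes care of the first identity and uses nothing beyond the very definition of $\nre$.

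For the formula $\nre J = (1-2\varepsilon-2\rho)\,\nabla^{LC}\!J$, the crucial observation is that for any tensor field $K$, the map $\nabla\mapsto\nabla K$ is affine in $\nabla$; since $\nre$ depends linearly on $(\varepsilon,\rho)$ by \eqref{family_2par}, the assignment $(\varepsilon,\rho)\mapsto \nre J$ is an affine function of $(\varepsilon,\rho)$ valued in $(1,2)$-tensor fields, of the form
$$\nre J \;=\; \nabla^{LC} J \,+\, \varepsilon\,A \,+\, \rho\,B,$$
for fixed tensor fields $A$ and $B$ independent of $(\varepsilon,\rho)$. I would then identify $A$ and $B$ by evaluating at the two distinguished Hermitian points of the family already singled out in the excerpt: at $(\varepsilon,\rho)=(\tfrac12,0)$ the connection coincides with the Strominger-Bismut connection $\nabla^+$, which is Hermitian, forcing $A = -2\,\nabla^{LC}\!J$; at $(\varepsilon,\rho)=(0,\tfrac12)$ it coincides with the Chern connection $\nabla^c$, again Hermitian, forcing $B = -2\,\nabla^{LC}\!J$. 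Substituting back gives the claimed scalar multiple of $\nabla^{LC}\!J$.

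For the final statement, $\nre$ is Hermitian exactly when $\nre g=0$ and $\nre J=0$; the first condition always holds, while the second, under the non-K\"ahler assumption that $\nabla^{LC}\!J\neq 0$, reduces to the scalar equation $1-2\varepsilon-2\rho = 0$, i.e.\ to the Gauduchon line $\rho=\tfrac12-\varepsilon$. There is no genuine obstacle in this proof; the only subtle point is the assertion that the affine map $(\varepsilon,\rho)\mapsto\nre J$ lands in a one-dimensional subspace, but this is a consequence, not a hypothesis, of identifying its values at two non-collinear points together with the origin, so the linearity argument does all the work once $\nabla^{\pm}$ and $\nabla^c$ are correctly located inside the $(\varepsilon,\rho)$-plane.
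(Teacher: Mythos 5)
Your argument is correct, but note that the paper does not actually prove this statement: it is quoted verbatim from \cite[Prop.~2.1]{OUV17}, where the proof is a direct computation. Your first step (metric compatibility via skew-symmetry of $\varepsilon\,T(X,\cdot,\cdot)+\rho\,C(X,\cdot,\cdot)$ in the last two slots) is exactly the standard argument and is fine. Your treatment of $\nre J$ is a genuinely different and more economical route: instead of expanding
$$
g\big((\nre_X J)Y,Z\big)=g\big((\nabla^{LC}_XJ)Y,Z\big)+\varepsilon\big[T(X,JY,Z)+T(X,Y,JZ)\big]+\rho\big[C(X,JY,Z)+C(X,Y,JZ)\big]
$$
and invoking the identity $2g((\nabla^{LC}_XJ)Y,Z)=dF(X,JY,JZ)-dF(X,Y,Z)$ (valid because $J$ is integrable, so $dF$ has no $(3,0)+(0,3)$ component), you exploit affinity of $(\varepsilon,\rho)\mapsto\nre J$ and evaluate at the three non-collinear points $(0,0)$, $(\tfrac12,0)$, $(0,\tfrac12)$. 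This is logically sound, and the two inputs you need --- that $\nabla^{\frac12,0}$ and $\nabla^{0,\frac12}$ are the Strominger--Bismut and Chern connections and that both are Hermitian --- are asserted in the paper and are classical (Gauduchon). What the direct computation buys instead is self-containedness: it does not presuppose the location of $\nabla^{\pm}$ and $\nabla^c$ in the $(\varepsilon,\rho)$-plane, and it makes visible the (integrability-dependent) coincidence $T(X,JY,Z)+T(X,Y,JZ)=C(X,JY,Z)+C(X,Y,JZ)$, which is why your tensors $A$ and $B$ come out equal. The final equivalence on the Gauduchon line is immediate either way, using that non-K\"ahler means $\nabla^{LC}J\not\equiv0$.
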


The 1-parameter family $\nabla^{\varepsilon,\frac12-\varepsilon}$
is precisely the family of canonical Hermitian connections $\nabla^{\tau}$ found by Gauduchon in \cite{Gau}, where the parameters are related by $\varepsilon=\frac{1-\tau}{4}$ and $\rho=\frac{1+\tau}{4}$.

\smallskip

The invariant Hermitian geometry of compact balanced non-Kähler homogeneous spaces allows to construct explicit solutions of the heterotic equations of motion (and more generally of the Hull-Strominger system) when $\nabla=\nre$ is taken in the anomaly cancellation equation. Note that in the invariant setting, the function $\parallel\!\!\Psi\!\!\parallel_F$ is constant and
the first equation in \eqref{HS} is equivalent to the closedness of $F^{2}$,
i.e. $F$ is balanced.
In fact, in \cite{FIUV09,OUV17} such solutions were found, even with positive slope parameter $\alpha'$, on a nilmanifold with underlying Lie algebra $\frh_3$, a solvmanifold with underlying algebra $\frg_7$, and on the quotient of the semisimple group SL(2,$\C$). Of course, the solutions required to find connections in the $(\re)$-plane satisfying the instanton equation \eqref{HYM}.

In the following three propositions, we recall the results obtained in \cite{OUV17} concerning the Hermitian-Yang-Mills condition for the metric $(\re)$-connections.
In what follows, we will use the notation 
$\nreJF$
to emphasize that we are considering any balanced Hermitian structure $(J,F)$ on the given homogeneous space and any connection in the corresponding $(\re)$-plane.

\smallskip

\begin{proposition}\label{h3}\cite[Prop. 3.2]{OUV17}
Let $(J,F)$ be any invariant balanced Hermitian structure on a $6$-dimensional
nilmanifold~$M$ with underlying nilpotent Lie algebra isomorphic to $\frh_{3}$. Then, the connection $\nreJF$ is an instanton if and only if $(\re)=(\frac12,0)$, i.e. it is the Bismut connection.
\end{proposition}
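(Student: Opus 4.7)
The plan is to reduce to a normal form for invariant balanced Hermitian structures on $\frh_3$, write the connection form of $\nreJF$ explicitly, compute its curvature $\cre$, and impose the two parts of the instanton equation \eqref{HYM}.

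On $\frh_3=(0^5,\,12+34)$ the invariant complex structures with closed $(3,0)$-form are essentially unique up to choice of a unitary $(1,0)$-coframe $\{\omega^1,\omega^2,\omega^3\}$, in which
$$d\omega^1=d\omega^2=0,\qquad d\omega^3=\omega^{1\bar 1}+\omega^{2\bar 2}$$
(after rescaling). For such a $J$, any invariant $J$-Hermitian metric may be brought into a diagonal normal form $F=\tfrac{i}{2}\bigl(r_1\,\omega^{1\bar 1}+r_2\,\omega^{2\bar 2}+r_3\,\omega^{3\bar 3}\bigr)$ with $r_i>0$, the balanced condition $dF^2=0$ then cutting out an algebraic relation among the $r_i$. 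This reduces the parameter space of $(J,F)$ to an essentially explicit family.

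From \eqref{family_2par}, the connection form of $\nreJF$ in the associated real orthonormal frame reads $\sre=\sigma^{LC}+\varepsilon\,\widehat T+\rho\,\widehat C$, where $\widehat T$ and $\widehat C$ are the endomorphism-valued 1-forms obtained from the Bismut torsion $T=JdF$ and the Chern torsion $C=dF(J\cdot,\cdot,\cdot)$ via the metric. Because only $\omega^3$ carries exterior derivative, $\widehat T$ and $\widehat C$ are sparse and $\sre$ is affine in $(\varepsilon,\rho)$. Computing $\cre=d\sre+\sre\wedge\sre$ and decomposing by bidegrees with respect to $J$, I would use the preceding identity $\nreJF J=(1-2\varepsilon-2\rho)\nabla^{LC}J$ to isolate the non-Hermitian defect, which enters the $(0,2)$-part of the curvature linearly in the factor $\tfrac12-\varepsilon-\rho$.

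The instanton equations $(\cre)^{2,0}=(\cre)^{0,2}=0$ and $\cre\wedge F^2=0$ then translate into a polynomial system in $(\varepsilon,\rho)$ whose coefficients depend on $r_1,r_2,r_3$. I expect the $(0,2)$-vanishing to furnish one affine relation, forcing $\rho=\tfrac12-\varepsilon$ and thereby reducing the problem to the Gauduchon line; imposing then the primitivity $\cre\wedge F^2=0$ along this line should supply a second independent linear relation which selects $\varepsilon=\tfrac12$, hence $\rho=0$, i.e.\ the Bismut connection. Conversely, that $\nabla^+=\nabla^{1/2,0}$ is a non-flat instanton for every invariant balanced metric on $\frh_3$ was established in \cite{FIUV09}. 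The principal difficulty is the algebraic bookkeeping inside the curvature computation: one must verify that the coefficients of the two affine relations in $(\varepsilon,\rho)$ are genuinely non-zero on the balanced locus, so that no degenerate family of $(\varepsilon,\rho)$-values can satisfy the instanton system.
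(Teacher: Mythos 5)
You should first note that the paper does not actually prove this statement: it is quoted verbatim from \cite[Prop.~3.2]{OUV17}. Your overall strategy --- put $(J,F)$ into an adapted normal form, write $\sre$ explicitly from \eqref{family_2par}, compute the curvature and impose the two instanton conditions --- is exactly the method used there and in the proofs the present paper does carry out for the neighbouring cases (Propositions~\ref{nilv-familia-2-14}, \ref{nilv-familia-2-15} and \ref{nilv-h19}, via the adapted frames of \cite{UV15} and the conditions $\treJF=0$, $\ureJF=0$ of Lemma~\ref{condiciones-de-instanton}). So the route is the right one, but there are two concrete problems with your plan.

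First, your normal form is the wrong complex structure on $\frh_3$. With $d\omega^3=\omega^{1\bar 1}+\omega^{2\bar 2}$ and $F=\tfrac{i}{2}(r_1\omega^{1\bar1}+r_2\omega^{2\bar2}+r_3\omega^{3\bar3})$ one finds $dF^2$ proportional to $r_3(r_1+r_2)$, so the balanced condition you intend to ``cut out'' has no positive solutions and your parameter space is empty. The complex structure on $\frh_3$ admitting balanced metrics is the one with $d\omega^3=\omega^{1\bar1}-\omega^{2\bar2}$ (in real terms, equations \eqref{2-stepreal} with $\varrho=b=0$, i.e.\ $de^6=-2t(e^{12}-e^{34})$), for which the balanced condition is $r_1=r_2$. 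This is only a sign, but it is the sign on which the existence of the family you must analyse rests; also ``essentially unique'' is inaccurate, since the two signs give inequivalent complex structures and only one of them is relevant here.

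Second, the mechanism you predict for extracting $(\varepsilon,\rho)=(\tfrac12,0)$ --- an affine relation $\rho=\tfrac12-\varepsilon$ from $(\cre)^{0,2}=0$ followed by a second affine relation from $\cre\wedge F^2=0$ --- is not what the computation yields. The conditions \eqref{Theta-Upsilon} are quadratic in $(\varepsilon,\rho)$, and for the family \eqref{2-stepreal} the conditions coming from $\ureJF$ displayed in the proof of Proposition~\ref{nilv-familia-2-15} all carry factors of $\varrho$ or $b$, hence become vacuous in the $\frh_3$ specialization $\varrho=b=0$; they certainly do not single out the Gauduchon line (compare Proposition~\ref{nilv-familia-2-14}, where an $\ureJF$-condition produces $\rho=\varepsilon+\tfrac12$ instead). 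What decides the $\frh_3$ case is the trace condition $\treJF(1,2)=0$, which specializes to a positive multiple of the sum of squares $\rho^2+(\varepsilon-\tfrac12)^2$ and therefore pins down the Bismut point on its own. Since the substance of your argument is precisely the claim that two independent, non-degenerate affine relations appear, and that claim is contradicted in form by the actual system (even though the conclusion is correct), the proposal is not yet a proof: the curvature computation has to be carried out and the resulting polynomial system exhibited, as in \cite{OUV17}.
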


\begin{proposition}\label{g7}\cite[Prop. 5.2]{OUV17}
Let $J$ be any invariant complex structure with non-zero closed $(3,0)$-form on a $6$-dimensional solvmanifold~$M$ with underlying solvable Lie algebra isomorphic to $\frg_{7}$. Let $F$ be any invariant balanced metric on $(M,J)$, and let $\nreJF$ be a connection in the $(\re)$-plane.
Then, we have:
\begin{enumerate}
\item[{\rm (i)}] If $(\re)\not=(\frac12,0)$, then $\nreJF$ is not an instanton.
\item[{\rm (ii)}] There exist balanced metrics $F$ for which the Bismut connection $\nabla^{\frac12,0}_{\!(\!J,F)}$ is an instanton.
\end{enumerate}
\end{proposition}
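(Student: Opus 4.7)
The plan is to translate the question entirely to the Lie algebra $\frg_7$ and work with the $\re$-plane on invariant data. First, I would recall from \cite{FOU} the explicit description of the invariant complex structures $J$ with non-zero closed $(3,0)$-form on $\frg_7$ and, for each such $J$, the family of invariant balanced Hermitian metrics $F$; these depend on a finite list of real parameters that can be packaged into a unitary coframe $\{\omega^1,\omega^2,\omega^3\}$ of $(1,0)$-forms with structure equations encoding the complex geometry of $\frg_7$.

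Second, for a generic balanced pair $(J,F)$, I would write down the connection $1$-forms $(\sre)_i{}^{\!j}$ of $\nreJF$ in this coframe using formula \eqref{family_2par} and the expressions $C(\cdot,\cdot,\cdot)=dF(J\cdot,\cdot,\cdot)$ and $T(\cdot,\cdot,\cdot)=JdF(\cdot,\cdot,\cdot)$: both torsions are readily computable from $dF$ on $\frg_7$, and the Levi-Civita piece follows from Koszul's formula applied to invariant vector fields. Once the matrix $\sre$ is obtained, the curvature is $\cre=d\sre+\sre\wedge\sre$.

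Third, I would impose the Hermitian-Yang-Mills conditions \eqref{HYM} and read them off according to bidegree. The $(0,2)$-component $(\cre)^{0,2}$ produces several scalar equations in the parameters $(\re)$ and in the coefficients of $(J,F)$. For part (i), the key point will be to extract from $(\cre)^{0,2}=0$, together with the primitivity condition $\cre\wedge F^2=0$, a set of algebraic relations whose solutions force $\re=\frac12$ and $\rho=0$ whenever the metric is balanced and non-K\"ahler (using the explicit form of the balanced defect on $\frg_7$, which is non-vanishing); I expect this to boil down to linear expressions in $\re,\rho$ with nonzero coefficients determined by the torsion of the Chern connection, so that the only solution is the Bismut point. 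The main obstacle is precisely to carry out this elimination uniformly across all balanced $F$ and all admissible $J$, so that no branch of the parameter space escapes detection; this is the step where one must be careful and, most likely, appeal to symbolic computation to manage the size of the expressions.

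For part (ii), I would instead exhibit an explicit balanced metric, for instance the ``diagonal'' one in a distinguished unitary coframe (essentially $F=\frac{\sqi}{2}\sum_{k=1}^3 \omega^k\wedge\bar\omega^k$ after a suitable normalization), and verify directly that at $(\re)=(\frac12,0)$ the Bismut curvature $\Omega^{\frac12,0}$ is of type $(1,1)$ and primitive. Since for the Bismut connection the vanishing of $\Omega^{0,2}$ reduces to an algebraic identity on the torsion, and since the balanced condition is tailor-made to make the primitivity $\Omega^{\frac12,0}\wedge F^2=0$ tractable, this verification is a finite computation once the explicit $(J,F)$ is fixed. Combining parts (i) and (ii) then gives the statement.
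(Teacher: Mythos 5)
The paper does not actually prove this proposition: it is recalled verbatim from \cite[Prop.\ 5.2]{OUV17}, so there is no in-paper proof to compare against. That said, your outline is precisely the computational scheme the paper deploys for every other algebra in Sections~\ref{instantones-nil} and~\ref{instantones-solv}: take the classification of complex structures and balanced metrics from \cite{FOU}, pass to an adapted real basis satisfying \eqref{adapted-basis}, write the connection $1$-forms via \eqref{family_2par}, and test the instanton condition through the quantities $\treJF$ and $\ureJF$ of Lemma~\ref{condiciones-de-instanton}. Two caveats on the details. First, in part (i) the elimination typically produces quadratic, not linear, relations in $(\varepsilon,\rho)$ --- in the analogous propositions one meets pairs such as $(1-2\rho)^2-4\varepsilon^2=0$ together with $\varepsilon(1-2\rho)=0$ --- so you should expect to intersect a few conics rather than lines; this changes nothing structurally but matters when you actually carry out the case analysis over all admissible $(J,F)$. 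Second, part (ii) asserts only the \emph{existence} of balanced metrics with Bismut instanton, and it is not guaranteed that the naive diagonal metric is one of them; you must run the conditions $\Theta^{\frac12,0}_{\!(\!J,F)}(i,j)=0$ and $\Upsilon^{\frac12,0}_{\!(\!J,F)}(i,j,k,l)=0$ over the full parametrized family of balanced metrics on $(\frg_7,J)$ and locate the nonempty locus where they hold, which is exactly why the statement is phrased existentially. As a plan your proposal is sound and matches the method of the source; as written it defers all of the computations that constitute the actual proof.
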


\begin{proposition}\label{sl2C}\cite[Prop. 4.1]{OUV17}
Let $J$ be the complex parallelizable structure on a compact quotient $M$ with underlying Lie algebra $\mathfrak{so}(3,\!1)$ (that is, $M$ is the quotient of the semisimple group $SL(2, \C)$ by a lattice of maximal rank). Then, there is a one-parametric family of invariant balanced Hermitian metrics $F_t$ for which the connection $\nabla_{\!\!(\!J,F_t\!)}^{\varepsilon,\rho}$ is an instanton only for $(\re)\in\{(\frac12,0),(0,\frac12)\}$, i.e. for the Chern connection (which is flat) and the Bismut connection.
\end{proposition}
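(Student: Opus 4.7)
The approach rests on a direct curvature computation in the complex parallelizable frame of Example~\ref{so(3,1)}. There, the Lie algebra $\mathfrak{so}(3,1)\cong\mathfrak{sl}(2,\C)$ is presented via a holomorphic coframe $\{\omega^1,\omega^2,\omega^3\}$ with structure equations~\eqref{complex-para-str}. Any invariant Hermitian metric can be written as $F=\frac{i}{2}\sum_{j,k}h_{jk}\,\omega^j\wedge\bar{\omega}^k$ with $(h_{jk})$ a positive-definite Hermitian $3\times 3$ matrix; since $J$ is complex parallelizable, such an $F$ is automatically balanced (as noted before Corollary~\ref{balanced-classification}), so the balanced hypothesis imposes no extra restriction on $F$.

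The first observation is that the Chern connection of a complex parallelizable manifold is flat: in the holomorphic frame $\{\omega^j\}$ its connection one-forms vanish identically, hence $(\varepsilon,\rho)=(0,\tfrac12)$ yields an instanton for every choice of $(h_{jk})$. The substantive content of the proposition is therefore to (a) characterize the invariant balanced metrics for which the Bismut connection $\nabla^{1/2,0}_{(J,F)}$ is also an instanton, and (b) show that for those metrics no other point of the $(\varepsilon,\rho)$-plane produces an instanton.

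To carry this out, I would use the defining formula~\eqref{family_2par} to compute the connection one-forms $\sigma^{\varepsilon,\rho}$, reading the torsions $T=JdF$ and $C(\cdot,\cdot,\cdot)=dF(J\cdot,\cdot,\cdot)$ off directly from~\eqref{complex-para-str} and the coefficients $h_{jk}$. After reducing $(h_{jk})$ to a suitable normal form via a change of coframe compatible with the complex parallelizable structure, the curvature $\Omega^{\varepsilon,\rho}$ becomes a 2-form with values in $\mathrm{End}(TM)$ whose components are polynomial in $\varepsilon$, $\rho$, and a small number of real parameters encoding the reduced $(h_{jk})$.

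Finally, the Hermitian-Yang-Mills conditions~\eqref{HYM} translate into an explicit polynomial system. I expect the vanishing of $(\Omega^{\varepsilon,\rho})^{0,2}$ to first force $\rho=\tfrac12-\varepsilon$ (the Gauduchon line), and then, once the non-flatness of the resulting connection is exploited, to single out $(\varepsilon,\rho)=(\tfrac12,0)$; the primitivity condition $\Omega^{\varepsilon,\rho}\wedge F^2=0$ imposed at the Bismut point would then cut the remaining metric parameters down to the one-parameter family~$F_t$. The main obstacle will be organizing the curvature computation on the non-abelian algebra $\mathfrak{sl}(2,\C)$ so that the two privileged solutions $(0,\tfrac12)$ and $(\tfrac12,0)$ appear as the only instanton points, with no spurious solutions arising off the Gauduchon line or for metrics outside the family~$F_t$.
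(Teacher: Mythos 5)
Your overall strategy is the right one and is essentially the paper's: this proposition is not proved here but quoted from \cite{OUV17}, and the argument there is exactly the kind of computation you describe and that Sections~\ref{instantones-nil}--\ref{instantones-solv} carry out for the other algebras --- pass to an adapted real basis for $(J,F)$, write the connection $1$-forms of $\nre$ from \eqref{family_2par}, and impose the conditions $\treJF(i,j)=0$ and $\ureJF(i,j,k,l)=0$ of Lemma~\ref{condiciones-de-instanton} as a polynomial system in $(\varepsilon,\rho)$ and the metric coefficients. Two small points of imprecision in your plan: first, the flat Chern connection $(0,\frac12)$ is one of the two claimed instanton points, so ``exploiting non-flatness to single out $(\frac12,0)$'' cannot be the logic --- rather, the $\Theta$- and $\Upsilon$-equations for the family $F_t$ must be shown to cut the $(\re)$-plane down to exactly the two points $(\frac12,0)$ and $(0,\frac12)$ (compare the analogous pairs of conditions in Propositions~\ref{nilv-familia-2-14} and~\ref{solv-g8-cp-balanced-real-basis}, where the same mechanism leaves only the Chern point). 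Second, the statement asserts the existence of a one-parametric family with this property, and in \cite{OUV17} that family is exhibited explicitly (the diagonal metrics $F_t=\frac{i}{2}t^2(\omega^{1\bar1}+\omega^{2\bar2}+\omega^{3\bar3})$) and the computation run for it, rather than the family being derived as the locus where the Bismut connection is an instanton among all invariant metrics; your reading is a stronger claim than what is needed or proved. Neither point invalidates the approach, but the write-up should fix the quantifier structure and verify explicitly that both privileged points survive all the instanton equations for $F_t$.
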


In all the cases, the curvature of the Bismut connection has non-vanishing trace, in particular it is not flat.
Moreover, it was conjectured that these are the only spaces admitting such solutions, more concretely (see \cite[Section 7]{OUV17}), if a compact non-K\"ahler homogeneous space $M=\Gamma\backslash G$
admits an invariant solution of the heterotic equations of motion with $\alpha'>0$ and
with respect to some non-flat connection $\nabla$ in the ansatz $\nre$,
then
$\nabla$ is the Bismut connection and $M$ is one of the spaces above.

\smallskip

Our goal in the second part of the paper is to prove the following result related to this conjecture, which is valid independently of the sign of the slope parameter $\alpha'$.

\begin{theorem}\label{main-instanton}
Let $M=\Gamma\backslash G$ be a six-dimensional compact manifold defined as the quotient of a simply connected Lie group $G$ by a lattice $\Gamma$ of maximal rank.
Suppose that $M$ possesses an invariant balanced Hermitian structure $(J,F)$ with invariant non-zero closed $(3,0)$-form.  
Let $\nreJF$ be a metric connections in the $(\re)$-plane.
If $\nreJF$ is a non-flat instanton, then the Lie algebra of $G$ is isomorphic to $\frh_3$, $\frg_7$, or $\mathfrak{so}(3,\!1)$. 
\end{theorem}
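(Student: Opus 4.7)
The plan is to combine Corollary~\ref{quotient-balanced} with Propositions~\ref{h3}, \ref{g7} and~\ref{sl2C} and then rule out, one Lie algebra at a time, every other entry on the resulting list. By Corollary~\ref{quotient-balanced}, the Lie algebra $\frg$ of $G$ must lie in the finite set $\{\frh_{1},\frh_{2},\frh_{3},\frh_{4},\frh_{5},\frh_{6},\frh^-_{19},\frg_1,\frg_2^{\alpha},\frg_3,\frg_5,\frg_7,\frg_8,\mathfrak{so}(3,1)\}$, and the three target algebras $\frh_3$, $\frg_7$ and $\mathfrak{so}(3,1)$ are precisely those for which Propositions~\ref{h3}--\ref{sl2C} already exhibit non-flat instantons in the $(\re)$-plane, so there is nothing to prove for them. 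The Kähler algebras $\frh_1$ and $\frg_2^0$ are eliminated immediately: since $dF=0$ forces $T=JdF=0$ and $C(\cdot,\cdot,\cdot)=dF(J\cdot,\cdot,\cdot)=0$, formula~\eqref{family_2par} collapses to $\nreJF=\nabla^{LC}$ for every $(\re)$, and any invariant Kähler metric on the corresponding compact homogeneous space is flat (these are finite quotients of complex tori), so no non-flat instanton is available there.

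For each of the remaining, non-Kähler algebras, the strategy is uniform. First, use the available classifications of invariant complex structures with non-zero closed $(3,0)$-form (Salamon~\cite{S} in the nilpotent case and~\cite{FOU} in the solvable case) together with the classifications of invariant balanced metrics compatible with such $J$'s (from~\cite{Ug} and~\cite[Thm.~4.5]{FOU}) to reduce the pair $(J,F)$ to a finite collection of canonical models with only a few real parameters. Next, fix a $J$-adapted coframe, write the torsion 3-forms $T=JdF$ and $C(\cdot,\cdot,\cdot)=dF(J\cdot,\cdot,\cdot)$ explicitly, and read the Christoffel symbols of $\nreJF$ off formula~\eqref{family_2par}. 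The resulting curvature $\Omega^{\re}$ is an $\mathfrak{so}(6)$-valued 2-form depending polynomially on $(\re)$ and on the free parameters of $(J,F)$. Finally, impose the instanton system~\eqref{HYM}: the bidegree conditions $(\Omega^{\re})^{2,0}=(\Omega^{\re})^{0,2}=0$ together with the primitivity condition $\Omega^{\re}\wedge F^{2}=0$ produce an algebraic system, and the goal is to show that either no admissible $(\re)$ exists, or every solution forces $\Omega^{\re}\equiv 0$, contradicting non-flatness.

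I would split the case analysis between Section~\ref{instantones-nil}, handling the nilpotent algebras $\frh_2,\frh_4,\frh_5,\frh_6,\frh^-_{19}$, and Section~\ref{instantones-solv}, handling the solvable algebras $\frg_1$, $\frg_2^\alpha$ with $\alpha>0$, $\frg_3,\frg_5,\frg_8$. A convenient organizing device is the expansion
\[
\Omega^{\re}=\Omega^{LC}+\varepsilon\,A+\rho\,B+\varepsilon^{2}P+\varepsilon\rho\,Q+\rho^{2}R,
\]
where $A,B,P,Q,R$ are built algebraically from $T$, $C$, $J$ and $F$; treating $(\re)$ as formal variables, vanishing of the various bidegree and primitive components yields polynomial obstructions on the Lie-algebraic data, and these should turn out to be incompatible with non-flatness for every algebra outside the target list.

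The main obstacle I anticipate lies in the algebras carrying continuous moduli of $J$ or of $F$, namely $\frg_2^\alpha$ with $\alpha>0$ (a continuous parameter), $\frh^-_{19}$ (a family of abelian complex structures), and $\frg_5,\frg_8$ (where the Levi-Civita and torsion terms entangle the parameters of $J$ and $F$). For these, the system coming from~\eqref{HYM} is genuinely multivariate and one must carefully rule out spurious branches in which $\Omega^{\re}$ happens to vanish but the underlying metric is not flat. My plan to manage this is to first impose the closedness of $\Psi$ and of $F^{2}$, which sharply cuts the parameter space, and then extract the coefficients of the highest powers $\varepsilon^{2},\varepsilon\rho,\rho^{2}$ in $(\Omega^{\re})^{2,0}$: these depend only on the brackets of $\frg$, and their non-triviality for every algebra outside $\{\frh_3,\frg_7,\mathfrak{so}(3,1)\}$ should provide the obstruction that closes the argument.
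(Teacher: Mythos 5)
Your overall strategy coincides with the paper's: reduce to the finite list of Corollary~\ref{quotient-balanced}, observe that nothing needs to be proved for $\frh_3$, $\frg_7$, $\mathfrak{so}(3,1)$ since the implication is vacuously compatible with them, and then eliminate each remaining algebra by putting $(J,F)$ into an adapted coframe (via the classifications of \cite{S}, \cite{UV15}, \cite{FOU}) and showing that the instanton system \eqref{HYM} --- equivalently the conditions $\treJF(i,j)=0$ and $\ureJF(i,j,k,l)=0$ of Lemma~\ref{condiciones-de-instanton} --- forces either no admissible $(\varepsilon,\rho)$ or a flat connection. This is exactly how Sections~\ref{instantones-nil} and~\ref{instantones-solv} proceed, including your split between the nilpotent and solvable cases.

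There is, however, one concrete gap: your elimination of $\frg_2^0$ rests on the false premise that every invariant balanced metric on the corresponding solvmanifold is Kähler. It is not. In the adapted equations \eqref{solv-g2-0} the invariant balanced metrics form a family parametrized by $u\in\B$, and only $u=0$ gives a Kähler metric; for $u\neq 0$ the metric is balanced non-Kähler, $dF\neq 0$, and the connections $\nreJF$ do not collapse to $\nabla^{LC}$. The paper must, and does in Proposition~\ref{solv-g2-0-balanced-real-basis}, run the instanton computation for these metrics: the condition $\treJF(1,2)=0$ forces $(\varepsilon,\rho)=(\tfrac12,0)$ or $u=0$, and then $\Theta^{\frac12,0}_{\!(\!J,F)}(5,6)=\frac{16|u|^2}{t^2(1-|u|^2)}\neq 0$ rules out the non-Kähler branch entirely. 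Since your case list for the solvable section explicitly omits $\frg_2^0$, this family is left untreated; the fix is simply to include $\frg_2^0$ in the solvable case analysis alongside $\frg_1$ and $\frg_2^{\alpha>0}$, where the computation is of the same type. A smaller remark: your concern about ``spurious branches in which $\Omega^{\re}$ vanishes but the metric is not flat'' is not an obstacle --- a flat instanton is perfectly compatible with the statement, and such branches genuinely occur (the Chern connection on $\frh_5$, $\frg_1$, $\frg_2^{\alpha>0}$, $\frg_8$); only non-flat instantons need to be excluded.
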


\vskip.1cm

By the classification results obtained in the previous section, it suffices to focus on homogeneous spaces based on the Lie algebras $\frg\not\cong \frh_3$, $\frg_7$, $\mathfrak{so}(3,\!1)$. Thus, we will study the balanced Hermitian geometry on the Lie algebras $\frh_{2}$, $\frh_{4}$, $\frh_5$, $\frh_{6}$, $\frh_{19}^-$, $\frg_1$, $\frg_2^0$, $\frg_2^{\alpha}$ $(\alpha>0)$, $\frg_3$, $\frg_5$, and $\frg_8$. Section~\ref{instantones-nil} is devoted to the nilpotent case, whereas in  Section~\ref{instantones-solv} we study the class of solvmanifolds.
Section~\ref{nuevas-soluciones} explores the role played by flat instantons (Chern connection) in the construction of solutions on the Nakamura manifold with given balanced class.

From now on, the manifold $M=\Gamma\backslash G$ will be a compact quotient of a Lie group $G$ by a lattice $\Gamma$,
endowed with an invariant Hermitian structure $(J,F)$,
that is, $(J,F)$ can be defined at the level of the Lie algebra $\frg$ of $G$.
We will say that a basis $\{e^k\}_{k=1}^6$ for $\frg^*$ is {\it adapted} to the  Hermitian structure if both the complex structure $J$
and the 2-form $F$ express in the canonical way
\begin{equation}\label{adapted-basis}
Je^1=-e^2,\ Je^3=-e^4,\ Je^5=-e^6,\quad\quad F=e^{12}+e^{34}+e^{56}.
\end{equation}
Hence, the metric $g$ is given by $g=e^1\otimes e^1 + \cdots +e^6\otimes e^6$.

Given any linear connection $\nabla$, the connection 1-forms
$(\sigma^{\nabla})^i_j$ with respect to an adapted basis are
$$
(\sigma^{\nabla})^i_j(e_k) = g(\nabla_{e_k}e_j,e_i),
$$
and the curvature 2-forms $(\Omega^{\nabla})^i_j$ are given by
$$
(\Omega^{\nabla})^i_j = d (\sigma^{\nabla})^i_j + \sum_{1\leq k \leq 6}
(\sigma^{\nabla})^i_k\wedge (\sigma^{\nabla})^k_j.
$$

Let $c_{ij}^k$ be the structure constants of the Lie algebra $\frg$ with respect to an adapted basis $\{e^k\}_{k=1}^6$, that is,
$$
d e^k = \sum_{1\leq i<j \leq 6} c_{ij}^k \, e^{i j},\quad\quad
1\leq k\leq 6.
$$
Since $d e^k(e_i,e_j)= -e^k([e_i,e_j])$ and the basis $\{e^k\}_{k=1}^6$ is orthonormal, the Levi-Civita connection
1-forms $(\sigma^{LC})^i_j$ of the metric $g$ express as
$
(\sigma^{LC})^i_j(e_k) = \frac12(c^i_{jk}-c^k_{ij}+c^j_{ki})
$.

\smallskip

Now, let $\nabla=\nreJF$ be any metric connection in the $(\re)$-plane.
Using \eqref{family_2par}, its connection 1-forms are given by
\begin{eqnarray}\label{connection-1-forms}
\nonumber
\big(\sigma^{\nreJF}\big)^i_j (e_k)&=&(\sigma^{LC})^i_j(e_k) - \varepsilon\, T(e_i,e_j,e_k) -\rho\, C(e_k, e_i, e_j) \\
&=&
\nonumber \frac12(c^i_{jk}-c^k_{ij}+c^j_{ki}) -\varepsilon\,JdF(e_i, e_j, e_k) - \rho\, dF(J e_k,e_i,e_j).
\end{eqnarray}

For any $1\leq i,j,k,l\leq6$, we define
$$
\begin{array}{rl}
& \treJF(i,j):=\big(\Omega^{\nreJF}\big)^i_j(e_1,e_2)+\big(\Omega^{\nreJF}\big)^i_j(e_3,e_4)+\big(\Omega^{\nreJF}\big)^i_j(e_5,e_6),  \\[8pt]
& \ureJF(i,j,k,l):=\big(\Omega^{\nreJF}\big)^i_j(Je_k,Je_l)-\big(\Omega^{\nreJF}\big)^i_j(e_k,e_l).
\end{array}
$$

Then, one can express the Hermitian-Yang-Mills condition \eqref{HYM} as follows:

\begin{lemma}\label{condiciones-de-instanton}
The connection $\nreJF$ is an instanton
if and only if for an adapted basis $\{e^k\}_{k=1}^6$ the
following conditions
\begin{equation}\label{Theta-Upsilon}
\treJF(i,j)=0,\quad\quad
\ureJF(i,j,k,l)=0,
\end{equation}
are satisfied  for every $1\leq i,j,k,l\leq6$.
\end{lemma}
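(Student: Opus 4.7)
The plan is to verify the two parts of the Hermitian-Yang-Mills condition \eqref{HYM} separately, applied at the level of each scalar-valued curvature 2-form $(\Omega^{\nreJF})^i_j$. Note that both conditions $\Omega\wedge F^2=0$ and $\Omega^{2,0}=\Omega^{0,2}=0$ are linear in $\Omega$, so they decouple on the components $(\Omega^{\nreJF})^i_j$ and it suffices to check them for each pair $(i,j)$.

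First I would translate the trace equation $\Omega\wedge F^2=0$. Because the basis $\{e^k\}_{k=1}^6$ is adapted in the sense of \eqref{adapted-basis}, the fundamental form satisfies
$$F^2=2\bigl(e^{1234}+e^{1256}+e^{3456}\bigr).$$
For an arbitrary 2-form $\omega=\sum_{a<b}\omega_{ab}\,e^{ab}$ on $\frg$, a direct wedge-product computation (with careful sign bookkeeping for permutations such as $e^{34}\wedge e^{1256}$) yields
$$\omega\wedge F^2=2\bigl(\omega_{12}+\omega_{34}+\omega_{56}\bigr)\,e^{123456}.$$
Applying this to $\omega=(\Omega^{\nreJF})^i_j$, the coefficient on the right equals $(\Omega^{\nreJF})^i_j(e_1,e_2)+(\Omega^{\nreJF})^i_j(e_3,e_4)+(\Omega^{\nreJF})^i_j(e_5,e_6)=\treJF(i,j)$. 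Hence $\Omega\wedge F^2=0$ is equivalent to the first family of equations in \eqref{Theta-Upsilon}.

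Next, I would handle the type condition $(\Omega^{\nreJF})^{2,0}=(\Omega^{\nreJF})^{0,2}=0$, i.e.\ that each $(\Omega^{\nreJF})^i_j$ is a $(1,1)$-form with respect to $J$. The standard criterion, coming from the decomposition $\wedge^2\frg^*_{\C}=\wedge^{2,0}\oplus\wedge^{1,1}\oplus\wedge^{0,2}$ together with the observation that the involution $\omega\mapsto\omega(J\cdot,J\cdot)$ acts as $+\mathrm{Id}$ on $\wedge^{1,1}$ and as $-\mathrm{Id}$ on $\wedge^{2,0}\oplus\wedge^{0,2}$, says that a real 2-form $\omega$ is of type $(1,1)$ precisely when $\omega(JX,JY)=\omega(X,Y)$ for all $X,Y\in\frg$. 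Specialising to $\omega=(\Omega^{\nreJF})^i_j$ and evaluating at $(e_k,e_l)$ for arbitrary $1\le k,l\le 6$ gives exactly $\ureJF(i,j,k,l)=0$.

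Combining both steps establishes the stated equivalence. There is no substantive obstacle here: the argument is a direct unwinding of definitions, and the only delicate point is the sign bookkeeping in the wedge products of the form $e^{ab}\wedge e^{cdef}$ that yields the clean expression for $\omega\wedge F^2$.
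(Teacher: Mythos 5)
Your proof is correct and follows essentially the same route as the paper, which simply states that the lemma follows directly from the two Hermitian--Yang--Mills conditions together with \eqref{adapted-basis}. You have merely filled in the routine computations (the wedge-product identity $\omega\wedge F^2=2(\omega_{12}+\omega_{34}+\omega_{56})\,e^{123456}$ and the standard $(1,1)$-criterion $\omega(J\cdot,J\cdot)=\omega(\cdot,\cdot)$) that the paper leaves implicit.
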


\begin{proof}
It follows directly from the conditions $\Omega^{\nreJF}\wedge F^2=0$ and $\big(\Omega^{\nreJF}\big)^{0,2}=\big(\Omega^{\nreJF}\big)^{2,0}=0$, taking into account \eqref{adapted-basis}.
\end{proof}

\section{Instantons on balanced nilmanifolds}\label{instantones-nil}

\noindent In this section we prove Theorem~\ref{main-instanton} for nilmanifolds. We will use the adapted frames obtained in \cite{UV15} for any invariant balanced Hermitian structure.

Recall that by our discussion in Section~\ref{HS-homogeneous} we need to study the balanced Hermitian geometry on the nilmanifolds with underlying Lie algebra isomorphic to any of the nilpotent Lie algebras $\frh_{2}$, $\frh_{4}$, $\frh_5$, $\frh_{6}$, or $\frh_{19}^-$.

In the following result we study the Hermitian geometry on the nilpotent Lie algebra $\frh_5$ endowed with its complex parallelizable structure.

\begin{proposition}\label{nilv-familia-2-14}
Let $(M,J)$ be the Iwasawa manifold and let $F$ be any invariant Hermitian metric on $(M,J)$.
Then, the connection $\nreJF$ is an instanton if and only if
$(\varepsilon,\rho)=(0,\frac12)$, i.e. it is the Chern connection (which is flat).
\end{proposition}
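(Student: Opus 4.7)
The plan is to exploit the explicit computations of connection and curvature forms in an adapted basis, following the same strategy used in \cite{OUV17} for the cases $\frh_3$, $\frg_7$, and $\mathfrak{so}(3,1)$, and then impose the Hermitian-Yang-Mills conditions of Lemma~\ref{condiciones-de-instanton}. One direction is immediate: the Iwasawa complex structure is complex parallelizable, with structure equations $d\omega^1 = d\omega^2 = 0$, $d\omega^3 = \omega^{12}$ in a holomorphic left-invariant frame. For any invariant Hermitian metric, the Chern connection with respect to this frame has vanishing curvature, so $\nabla^{0,1/2}_{(J,F)}$ is flat, and in particular it is an instanton. This also matches the assertion ``(which is flat)'' in the statement.

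For the converse, I would first fix an adapted real orthonormal basis $\{e^k\}_{k=1}^6$ as in \eqref{adapted-basis}, using the parameterization of invariant Hermitian metrics on the Iwasawa manifold provided in \cite{UV15}. In this basis the structure constants $c^k_{ij}$ of $\frh_5$ are concentrated in $k \in \{5,6\}$ (the non-triviality comes entirely from the single complex relation $d\omega^3 = \omega^{12}$), and they depend on a small number of real parameters encoding the metric. Then \eqref{family_2par} gives an explicit expression for $(\sigma^{\varepsilon,\rho})^i_j$ that is affine in $(\varepsilon,\rho)$, from which one computes the curvature 2-forms $(\Omega^{\varepsilon,\rho})^i_j$ and finally the quantities $\treJF(i,j)$ and $\ureJF(i,j,k,l)$.

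The instanton conditions \eqref{Theta-Upsilon} then produce a polynomial system in the variables $(\varepsilon,\rho)$ and the metric parameters. To extract the implication $(\varepsilon,\rho) = (0,\tfrac12)$, it suffices to isolate a well-chosen subset of these equations: typically the diagonal traces $\treJF(i,i)$ for the holomorphic pairs $(i,j) \in \{(1,2),(3,4),(5,6)\}$, together with a few entries of $\ureJF$ involving the first two blocks $\{e_1,e_2\}$, $\{e_3,e_4\}$ which are ``flat'' at the Lie algebra level and force the relevant factor $(2\varepsilon)(2\rho-1)$ to vanish. Because the Chern connection is already flat, every component of $\treJF$ and $\ureJF$ must factor as a combination vanishing at $(0,\tfrac12)$ times a polynomial factor in $(\varepsilon,\rho-\tfrac12)$, and one only has to check that this factor is not absorbed by any choice of the metric parameters.

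The main obstacle is precisely this last step: disentangling the metric parameters from $(\varepsilon,\rho)$ in the polynomial system. The simplification making it tractable is that the Chern connection is flat \emph{for every} invariant Hermitian $F$, so the polynomial identities in the metric parameters and $(\varepsilon,\rho)$ are already in a form where the dependence on $(\varepsilon - 0)$ and $(\rho - \tfrac12)$ can be factored out uniformly; one then needs only to verify that the remaining coefficient does not vanish identically on the parameter space of invariant metrics, which reduces to a finite algebraic check.
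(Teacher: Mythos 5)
Your overall strategy---fix the adapted frame from \cite{UV15}, compute the curvature of $\nreJF$, and impose the conditions of Lemma~\ref{condiciones-de-instanton}---is exactly the route the paper takes, and you handle the easy direction (flatness of the Chern connection for the complex parallelizable structure, hence the instanton property) in the same way. The converse, however, has a genuine gap as written: you never produce the equations that actually pin down $(\varepsilon,\rho)=(0,\tfrac12)$, and the one concrete mechanism you propose does not suffice. The vanishing of a factor of the form $(2\varepsilon)(2\rho-1)$ cuts out the \emph{union} of the two lines $\varepsilon=0$ and $\rho=\tfrac12$ in the $(\re)$-plane, not the single point $(0,\tfrac12)$; to isolate the point one needs two \emph{independent} linear conditions. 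In the paper these are
$\treJF(5,6)=0 \Leftrightarrow 1+2\varepsilon-2\rho=0$ and $\ureJF(1,3,1,3)=0 \Leftrightarrow 1-2\varepsilon-2\rho=0$,
computed in the adapted basis with $de^1=\cdots=de^4=0$, $de^5=t(e^{13}-e^{24})$, $de^6=t(e^{14}+e^{23})$, $t\in\R^*$; note that by \cite[Thm.\ 2.11]{UV15} the whole family of invariant Hermitian metrics is governed by this single parameter $t$, which moreover drops out of both conditions, so there is no ``disentangling of metric parameters'' left to do.

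Relatedly, your claim that every component of $\treJF$ and $\ureJF$ ``must factor as a combination vanishing at $(0,\tfrac12)$ times a polynomial factor'' is vacuous: any polynomial vanishing at a point admits such a description, and flatness of the Chern connection only guarantees that $(0,\tfrac12)$ lies in the common zero locus. What has to be ruled out is vanishing \emph{elsewhere}, and that requires exhibiting two conditions such as the ones above whose simultaneous solution set is exactly $\{(0,\tfrac12)\}$. Your plan defers precisely this step to ``a finite algebraic check,'' which is the entire content of the converse.
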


\begin{proof}
Since $J$ is complex parallelizable, it is well-known that any invariant Hermitian metric $F$ on the Iwasawa manifold $(M,J)$ is balanced and the Chern connection is flat, so it is an instanton.

Let  us consider any other connection $\nreJF$.
By \cite[Thm. 2.11]{UV15}, given any such $(J,F)$, there is a basis $\{e^k\}_{k=1}^6$ of 1-forms
satisfying \eqref{adapted-basis} and the following equations
\begin{equation}\label{h5real}
de^1 = de^2=de^3=de^4=0,\quad
de^5 = t\,e^{13} - t\,e^{24} ,\quad
de^6 = t\,e^{14} + t\,e^{23},
\end{equation}
where $t\in\mathbb{R}^*$. By Lemma~\ref{condiciones-de-instanton}, the connection is an instanton if and only if the conditions \eqref{Theta-Upsilon} hold. A direct calculation gives the following particular equations:
\begin{equation*}
\begin{array}{lcl}
\treJF(5,6)=0 & \Longleftrightarrow & 1+2\varepsilon-2 \rho=0,\\[6pt]
\ureJF(1,3,1,3)=0 & \Longleftrightarrow & 1-2 \varepsilon-2\rho=0,
\end{array}
\end{equation*}
which are never satisfied if $(\varepsilon,\rho)\not=(0,\frac12)$. In conclusion, only the Chern connection is an instanton.
\end{proof}

In the following result we exclude the case $\frh_3$ because it is studied in \cite{OUV17} (see Proposition~\ref{h3} above).
This is the reason for taking $(\varrho,b)\not=(0,0)$ in the following result.

\begin{proposition}\label{nilv-familia-2-15}
Let $(J,F)$ be an invariant balanced Hermitian structure on a
nilmanifold~$M$ with underlying Lie algebra isomorphic to $\frh_{2}$, $\frh_{4}$, $\frh_5$ or $\frh_{6}$. Suppose $J$ is not complex parallelizable. Then, the connection $\nreJF$ is never an instanton.
\end{proposition}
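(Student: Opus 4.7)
The plan is to invoke the classification of invariant balanced Hermitian structures on six-dimensional nilmanifolds given in \cite[Thm.~2.11]{UV15}. For each of the four Lie algebras $\frh_{2}$, $\frh_{4}$, $\frh_{5}$ and $\frh_{6}$, this classification provides an adapted basis $\{e^k\}_{k=1}^{6}$ satisfying \eqref{adapted-basis} together with explicit structure equations depending on a small number of real parameters, and it pins down exactly which values of those parameters correspond to each algebra and to the complex-parallelizable subcase. The remark before the statement, forcing $(\varrho,b)\neq(0,0)$, refers to two of these parameters whose simultaneous vanishing would send $(J,F)$ either to an Iwasawa-type structure on $\frh_5$ (treated in Proposition~\ref{nilv-familia-2-14}) or to $\frh_3$ (treated in Proposition~\ref{h3}).

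Starting from each such normal form, I would compute the Levi-Civita connection $1$-forms via the Koszul formula in terms of the structure constants, and then use \eqref{family_2par} to produce the $(\varepsilon,\rho)$-connection forms $\bigl(\sigma^{\nreJF}\bigr)^i_j$ for arbitrary $(\varepsilon,\rho)$. Exterior differentiating and squaring yields the curvature $2$-forms, from which one extracts the scalars $\treJF(i,j)$ and $\ureJF(i,j,k,l)$ introduced just before Lemma~\ref{condiciones-de-instanton}. By that lemma, the instanton condition \eqref{Theta-Upsilon} becomes a finite system of polynomial equations in the unknowns $\varepsilon,\rho$, whose coefficients are polynomial in the structure parameters $(\varrho,b,\dots)$ of the chosen normal form.

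The heart of the proof is to show, case by case over the four algebras, that this system is inconsistent whenever $(\varrho,b)\neq(0,0)$. Following the template of Proposition~\ref{nilv-familia-2-14}, for each algebra I would isolate two components (typically one of $\Theta$ and one of $\Upsilon$) that already give independent linear relations in $(\varepsilon,\rho)$ and determine $(\varepsilon,\rho)$ uniquely as a rational expression in the structure parameters; a third independent component, carefully chosen to depend non-trivially on $(\varrho,b)$, is then shown to force either $(\varrho,b)=(0,0)$ or a direct contradiction. The special value $(\varepsilon,\rho)=(\tfrac12,0)$ lying on the Bismut/Hermitian locus, where the Hermitian part of the instanton condition becomes trivial, must be checked separately by plugging it into one of the remaining $\ureJF$ components and seeing that a non-vanishing $(\varrho,b)$ prevents the vanishing.

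The main obstacle is purely combinatorial rather than conceptual: each of the four algebras produces a slightly different polynomial system, and one must identify, in each case, the minimal set of $\Theta$- and $\Upsilon$-components whose cancellation captures the obstruction carried by $(\varrho,b)$. Once these indices are correctly chosen, each case collapses to an elementary linear computation in $(\varepsilon,\rho)$, so the proof is essentially a finite check organized by the normal forms of \cite{UV15}.
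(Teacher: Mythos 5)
Your proposal is correct and follows essentially the same route as the paper: both rely on the adapted frames of \cite[Thm.~2.11]{UV15}, compute the quantities $\treJF$ and $\ureJF$ of Lemma~\ref{condiciones-de-instanton}, and derive a contradiction by a case analysis on the parameters (with $(\varrho,b)\neq(0,0)$ excluding the $\frh_3$ case). The only cosmetic difference is organizational: the paper works with the two parameter families \eqref{2-stepreal} and \eqref{realchangeAbelianh5}, which cover all four algebras at once, rather than treating each algebra separately, so it never needs to identify which parameter values realize which algebra.
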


\begin{proof}
We use the description of the balanced geometry on the Lie algebras $\frh_{2}$, $\frh_{4}$, $\frh_5$ and $\frh_{6}$ provided in \cite[Thm. 2.11]{UV15}.
For any invariant balanced Hermitian structure $(J,F)$, there is a basis $\{e^k\}_{k=1}^6$ of 1-forms
satisfying \eqref{adapted-basis} and one of the two following sets of equations:
\begin{equation}\label{2-stepreal}
\begin{cases}
\begin{array}{lcl}
de^1 \zzz & = &\zzz de^2=de^3=de^4=0,\\[4pt]
de^5 \zzz & = &\zzz \frac{t}{s}(\varrho+b^2)e^{13} -\frac{t}{s}(\varrho-b^2) e^{24} ,\\[5pt]
  de^6 \zzz & = &\zzz -2\,t\,(e^{12}-e^{34}) + \frac{t}{s}(\varrho-b^2)e^{14}
  + \frac{t}{s}(\varrho+b^2)e^{23};
\end{array}
\end{cases}
\end{equation}
\begin{equation}\label{realchangeAbelianh5}
\begin{cases}
\begin{array}{lcl}
de^1\zzz & = &\zzz de^2=de^3=de^4=0,\\[6pt]
de^5\zzz & = &\zzz s Y \left[ 2 b^2 u_1 |u|\,(e^{12}-e^{34}) - b^2 t
u_1
|u| Y\,(e^{13} + e^{24}) + 2 \varrho s u_1\,(e^{13} - e^{24}) \right.\\[4pt]
&& \quad \left. + 2 s u_2\left((\varrho-b^2)e^{14}+(\varrho+b^2)e^{23}\right) \right],\\[6pt]
de^6 \zzz & = &\zzz s Y \left[ 2 (2s^2- b^2 u_2)
|u|\,(e^{12}-e^{34})+ b^2 t u_2 |u| Y\,(e^{13}+e^{24})
- 2 \varrho s u_2\,(e^{13}-e^{24})\right.\\[4pt]
&& \quad \left. + 2 s
u_1\left((\varrho-b^2)e^{14}+(\varrho+b^2)e^{23}\right) \right].
\end{array}
\end{cases}
\end{equation}
Here $\varrho\in\{0,1\}$, $b\in \mathbb{R}$ with $(\varrho,b)\not=(0,0)$, and $s,t\in\mathbb{R}^*$. In the equations \eqref{realchangeAbelianh5}, $u=u_1+i\,u_2\in \mathbb{C}^*$ with $s^2>|u|^2$, and we are denoting $Y\!:=\frac{2 \sqrt{s^2-|u|^2}}{t\,|u|}$.
We recall that all the complex structures~$J$ are nilpotent, and the
abelian ones correspond to taking $\rho=0$ in~\eqref{2-stepreal} or~\eqref{realchangeAbelianh5}. Since we are considering $(\varrho,b)\not=(0,0)$,  when $J$ is abelian the Lie algebra is $\frh_5$ and we can normalize $b$ so that  $b=1$.

Next, we study the instanton condition for any connection $\nreJF$.
First, we consider the equations~\eqref{2-stepreal}.
The instanton conditions \eqref{Theta-Upsilon} give in particular the following equations:
\begin{equation*}
\begin{array}{lcl}
\treJF(1,2)=0 & \Longleftrightarrow & b^4 (1 + 4(\varepsilon - \rho)^2 ) + 4s^2\left( \rho^2 + (\varepsilon-\frac12)^2 \right) + 4\,\varepsilon\, \varrho^2 (2 \rho -1)=0,\\[6pt]
\ureJF(1,2,1,4)=0 & \Longleftrightarrow & \varrho(\varepsilon - \rho - \frac32)(\varepsilon - \rho +\frac12)=0,\\[6pt]
\ureJF(3,6,1,6)=0 & \Longleftrightarrow & (1+2\varepsilon-2\rho)\left[\varrho(1+2\varepsilon-2\rho)-b^2(1-2\varepsilon-2\rho)\right]=0,\\[6pt]
\ureJF(4,5,1,6)=0 & \Longleftrightarrow & (1+2\varepsilon-2\rho)\left[\varrho(1+2\varepsilon-2\rho)+b^2(1-2\varepsilon-2\rho)\right]=0.
\end{array}
\end{equation*}
Recall that $\varrho\in\{0,1\}$.
If $\varrho=1$, then the equation $\ureJF(1,2,1,4)=0$ implies $\varepsilon-\rho\in \{-\frac12,\frac32\}$. In the first case we have that $\Theta^{\varepsilon,\varepsilon+\frac12}_{\!(\!J,F)}(1,2)\not=0$ since $b^4 + s^2 + 4 \varepsilon^2 (1 + s^2)>0$. On the other hand, if $\varepsilon-\rho=\frac32$ then at least one of the conditions $\Upsilon^{\varepsilon,\varepsilon-\frac32}_{\!(\!J,F)}(3,6,1,6)=0=\Upsilon^{\varepsilon,\varepsilon-\frac32}_{\!(\!J,F)}(4,5,1,6)$ fail.

Let us suppose now that $\varrho=0$, that is to say, the complex structure $J$ is abelian, so we can take $b=1$. But in this case $\treJF(1,2)\not=0$ clearly. Hence, no connection can be an instanton.

\medskip

From now on, we consider the equations \eqref{realchangeAbelianh5} and divide our study into three cases depending on the values of the pair $(\varrho,b)$:

\smallskip

\noindent $\bullet$ If $(\varrho,b)=(0,1)$, then one can check that the equation $\treJF(5,6)=0$ is satisfied if and only if $(1-2 \varepsilon) \rho=0$, so $\varepsilon=\frac12$ or $\rho=0$. A direct calculation of the term $\ureJF(1,3,1,3)$ allows to show that in both cases the equation $\ureJF(1,3,1,3)=0$ holds if and only if
$(\varepsilon,\rho)=(\frac12,0)$. But for the latter connection one gets $\Theta^{\frac12,0}_{\!(\!J,F)}(1,2)\not=0$. Thus, no connection $\nreJF$ is an instanton when $(\varrho,b)=(0,1)$.

\smallskip

\noindent $\bullet$  In the case $(\varrho,b)=(1,0)$ we get the following conditions:
\begin{equation*}
\begin{array}{lcl}
\ureJF(1,5,3,5)=0 & \Longleftrightarrow & (1+2 \varepsilon-2\rho)^2 u_1 =0,\\[6pt]
\ureJF(1,5,3,6)=0 & \Longleftrightarrow & (1+2 \varepsilon-2\rho)^2 u_2 =0.
\end{array}
\end{equation*}
Using that $u\not=0$, one has $\rho=\varepsilon+\frac12$. Now, a direct calculation shows that $\Upsilon^{\varepsilon,\varepsilon+\frac12}_{\!(\!J,F)}(1,3,1,3)=0$ implies $\varepsilon=0$,
which in turn gives $\treJF(5,6)\not=0$. So, there are not instantons for~$(\varrho,b)=(1,0)$.

\smallskip

\noindent $\bullet$ Finally, let us study the case $\varrho=1$ and $b\not=0$. One has:
\begin{equation*}
\begin{array}{lcl}
\treJF(1,4)=0 & \Longleftrightarrow &
\left[(1+2 \varepsilon-2\rho)^2+4\rho(1-2\varepsilon)\right] u_1=0,\\[6pt]
\ureJF(1,2,1,4)=0 & \Longleftrightarrow & (1+2 \varepsilon-2\rho) (3-2 \varepsilon+2\rho)\, u_1 =0,\\[6pt]
\ureJF(1,5,1,6)=0 & \Longleftrightarrow & (1+2 \varepsilon-2\rho) (1-2 \varepsilon+2\rho)\, u_1 =0.
\end{array}
\end{equation*}

\noindent {\bf -}  If $u_1\not=0$, then necessarily $\rho=\varepsilon+\frac12$, and the first equation reduces to $(\varepsilon+\frac12)(\frac12-\varepsilon)=0$, so giving the two possible values $(\varepsilon,\rho)=(\frac12,1)$ or $(\varepsilon,\rho)=(-\frac12,0)$.
One can check that in both cases the condition $\treJF(1,2)=0$ is satisfied if and only if $(b^2-u_2)^2 + 2 s^2 - u_2^2 +1=0$. However, the latter is not possible since $s^2 > |u|^2$ implies $2s^2-u_2^2>0$. 

\smallskip

\noindent {\bf -}  Let us suppose that $u_1=0$ (thus, $u_2\not=0$). The difference $\ureJF(1,5,3,6)-\ureJF(2,5,3,5)$ is a nonzero multiple of $(1+2 \varepsilon-2\rho)^2 (2s^2-b^2u_2)$, so in what follows we will distinguish two cases depending on the vanishing of $2s^2-b^2u_2$:
\begin{enumerate}
\item[{\bf --}] If $2s^2\not=b^2 u_2$, then $\rho=\varepsilon+\frac12$. Moreover, we get
\begin{equation*}
\begin{array}{lcl}
\Theta^{\varepsilon,\varepsilon+\frac12}_{\!(\!J,F)}(1,3)=0 & \Longleftrightarrow &
 (1+2 \varepsilon) (1-2 \varepsilon) (2 s^2-b^2 u_2)=0,\\[6pt]
\Upsilon^{\varepsilon,\varepsilon+\frac12}_{\!(\!J,F)}(1,3,1,3)=0 & \Longleftrightarrow & \varepsilon \left( (b^2-u_2)^2 + 2 s^2 - u_2^2 +1 \right)=0.
\end{array}
\end{equation*}
Arguing as above, there are not solutions for this system.
\item[{\bf --}] Finally, we consider the case $2s^2=b^2 u_2$, so we can take  $u_2=2s^2/b^2$. Note that the condition $s^2>|u|^2$ translates into the inequality $b^4-4s^2>0$. A direct calculation gives the following equations:
\begin{equation*}
\begin{array}{lrl}
\phantom{espacio} \treJF(1,2)\!-\treJF(3,4)=0 \!\!&\!\! \Longleftrightarrow \!\!&\!\!
(1 + 2 \varepsilon)^2\!-\!16\varepsilon\rho+4\rho^2=0,\\[6pt]
\phantom{espacio} \treJF(5,6)=0 \!\!&\!\! \Longleftrightarrow \!\!&\!\! (1 + 2 \varepsilon)^2 \!-\!16\varepsilon\rho+4\rho^2 \!-\! 4(1 \!-\! 2 \varepsilon) \rho (1 + b^4 \!-\! 2 s^2)=0.
\end{array}
\end{equation*}
Taking the difference we get $(1 - 2 \varepsilon) \rho (1 + b^4 - 2 s^2)=0$, therefore $\varepsilon=\frac12$ or $\rho=0$ (since $b^4 - 4 s^2>0$).
Now, if $\varepsilon=\frac12$ the condition $\Theta^{\frac12,\rho}_{\!(\!J,F)}(5,6)=0$ reduces to $(1-\rho)^2 (b^4-4 s^2) =0$, which implies $\rho=1$; however, in such case one can check that $\Theta^{\frac12,1}_{\!(\!J,F)}(1,2)\not=0$.
Similarly, if $\rho=0$ then $\Theta^{\varepsilon,0}_{\!(\!J,F)}(5,6)=0$ reduces to
$(1+2 \varepsilon)^2 (b^4-4 s^2) =0$, thus $\varepsilon=-\frac12$, but again $\Theta^{-\frac12,0}_{\!(\!J,F)}(1,2)\not=0$.
\end{enumerate}

In conclusion, no connection $\nreJF$ is an instanton when $\varrho=1$ and $b\not=0$, and the proof of the proposition is complete.
\end{proof}

In the following result we consider the nilmanifolds having $\frh_{19}^-$ as 
underlying Lie algebra.

\begin{proposition}\label{nilv-h19}
Let $(J,F)$ be an invariant balanced Hermitian structure on a
nilmanifold~$M$ with underlying Lie algebra isomorphic to $\frh_{19}^-$.
Then, the connection $\nreJF$ is never an instanton.
\end{proposition}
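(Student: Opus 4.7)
The plan is to mimic closely the strategy of Propositions~\ref{nilv-familia-2-14} and~\ref{nilv-familia-2-15}: parametrize \emph{all} invariant balanced Hermitian structures $(J,F)$ on a nilmanifold with underlying algebra $\frh_{19}^-$ by an adapted orthonormal coframe $\{e^k\}_{k=1}^{6}$ satisfying \eqref{adapted-basis}, then compute the curvature components $\treJF(i,j)$ and $\ureJF(i,j,k,l)$ of Lemma~\ref{condiciones-de-instanton} as polynomials in $(\varepsilon,\rho)$ and in the structural parameters, and finally derive a contradiction from the simultaneous vanishing of a small, well-chosen subset of those polynomials.

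The first step is to invoke the classification of invariant balanced Hermitian structures on Lie algebras from the list of Corollary~\ref{balanced-classification} given in~\cite{UV15}. For $\frh_{19}^-$ this supplies a real adapted basis $\{e^k\}_{k=1}^{6}$ in which $J$ and $F$ have the canonical form \eqref{adapted-basis} and the differentials $de^k$ depend on a finite number of real parameters, say $s,t,\ldots$, with prescribed nonvanishing/inequality conditions coming from the balanced condition $dF^2=0$ and from the isomorphism class $\frh_{19}^-$ (which distinguishes it from $\frh_{19}^+$). Using formula~\eqref{connection-1-forms}, I would write the connection $1$-forms $\big(\sigma^{\nreJF}\big)^i_j$ explicitly as linear functions of $(\varepsilon,\rho)$ with coefficients built from the $c^k_{ij}$, and then compute the curvature $2$-forms $\big(\Omega^{\nreJF}\big)^i_j = d\sigma^i_j + \sigma^i_k\wedge \sigma^k_j$.

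Next, rather than writing down the entire instanton system, I would select a short list of components of $\treJF(i,j)$ and $\ureJF(i,j,k,l)$ whose expressions are as simple as possible. The pattern from Propositions~\ref{nilv-familia-2-14}--\ref{nilv-familia-2-15} is that one first extracts a factor of the form $(1\pm 2\varepsilon-2\rho)$ or $(1\pm 2\varepsilon+2\rho)$ from an $\ureJF$ component, forcing $\rho=\pm\tfrac12\mp\varepsilon$; one then substitutes this into a $\treJF$ component which reduces to a sum of squares (or a manifestly positive polynomial) in the structural parameters, contradicting the nonvanishing constraints that characterise the balanced locus on $\frh_{19}^-$. The case analysis will split according to whether $J$ is abelian or not (equivalently, according to which connected component of the moduli of complex structures on $\frh_{19}^-$ one lives in), exactly as in the proof of Proposition~\ref{nilv-familia-2-15}.

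I expect the main obstacle to be computational rather than conceptual. Because $\frh_{19}^-$ is genuinely $3$-step nilpotent, the structure equations involve more nonzero brackets than in the $2$-step cases, so the curvature polynomials are longer and the elimination between $\treJF(i,j)=0$ and $\ureJF(i,j,k,l)=0$ is heavier. The delicate point is to identify an $\ureJF$ component that forces a linear relation between $\varepsilon$ and $\rho$ independently of the structural parameters; once such a relation is obtained, substituting it into a judiciously chosen $\treJF$ entry should yield a positive polynomial in the remaining parameters (forbidden by the balanced/nondegeneracy constraints), closing the argument with the same clean contradiction as in the previous propositions.
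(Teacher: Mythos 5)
Your plan coincides with the paper's proof: it invokes the adapted frames for invariant balanced structures on $\frh_{19}^-$ from \cite[Thm.~2.11]{UV15} (two families of real structure equations) and derives a contradiction from a small, well-chosen subset of the instanton equations of Lemma~\ref{condiciones-de-instanton}. Two small corrections to your anticipated mechanics: on $\frh_{19}^-$ every complex structure is non-nilpotent, so there is no abelian/non-abelian dichotomy (the case split is between two metric normal forms), and no $\ureJF$ component is needed at all --- in both families the single trace component $\treJF(1,2)$ is already of the form $\bigl((\varepsilon-\tfrac12)^2+\rho^2\bigr)\cdot(\text{positive})$, which forces $(\varepsilon,\rho)=(\tfrac12,0)$ outright, after which a second trace component ($\treJF(5,6)$, resp.\ $\treJF(3,5)$) is seen to be nonzero.
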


\begin{proof}
The description of the balanced geometry on the Lie algebra $\frh_{19}^-$ can be found in \cite[Thm. 2.11]{UV15}. For any invariant balanced Hermitian structure $(J,F)$, there is a basis $\{e^k\}_{k=1}^6$ of 1-forms
satisfying \eqref{adapted-basis} and one of the two following sets of equations:
\begin{equation}\label{str-eq-Family-I}
\begin{cases}
\ de^1 = de^2=0,\quad de^3 = \frac{2s}{r}\,e^{15},\quad  de^4 =
\frac{2s}{r}\,e^{25},\quad de^5=0,\quad de^6 =\pm\frac{2}{rs}\,(e^{13}+e^{24});
\end{cases}
\end{equation}
\begin{equation}\label{str-eq-Family-II}
\begin{cases}
\begin{array}{lcl}
de^1 \zzz & = &\zzz de^2=0,\\[4pt] de^3
\zzz & = &\zzz\frac{s}{rtZ}\,
\left[\pm\frac{t^2}{s^2}\,(e^{13}+e^{24})\pm \frac{t^2}{s^2}(st+Z)\,
(e^{25}-e^{16}) + e^{14}+ \frac{1}{st+Z}\,e^{15}\right],
\\[4pt] de^4 \zzz & = &\zzz\frac{s}{rtZ}\,\left[e^{24}+
\frac{1}{st+Z}\,e^{25}\right],
\\[4pt] de^5\zzz & = &\zzz\frac{-s}{rtZ}\,\left[
(st+Z)\,e^{24}+e^{25}\right],
\\[4pt] de^6 \zzz & = &\zzz\frac{s}{rtZ}\,
\left[\pm \frac{t^2}{s^2}
\frac{1}{st+Z}\,(e^{13}+e^{24})\pm\frac{t^2}{s^2}\,(e^{25}-e^{16}) +
(st+Z)\,e^{14}+ e^{15}\right].
\end{array}
\end{cases}
\end{equation}
Here $r,s,t\in\mathbb{R}^*$, and in the equations \eqref{str-eq-Family-II} we have $s^2t^2>1$ and $Z:=\sqrt{s^2t^2-1}$.

We recall that any complex structure $J$ on $\frh_{19}^-$ is non-nilpotent, and there are
only two complex structures $J_0^{\pm}$ up to isomorphism. The $\pm$-sign in the equations above corresponds to $J=J_0^{\pm}$, respectively.

For \eqref{str-eq-Family-I}, a direct calculation shows that
\begin{equation*}
\begin{array}{lcl}
\treJF(1,2)=0 & \Longleftrightarrow & \left((\varepsilon-\frac12)^2+\rho^2\right)(1+s^4)=0,\\[6pt]
\treJF(5,6)=0 & \Longleftrightarrow &
\rho (\varepsilon-\frac12) + \varepsilon (\rho-\frac12) s^4 = 0,
\end{array}
\end{equation*}
whereas for the equations \eqref{str-eq-Family-II} we get
\begin{equation*}
\begin{array}{lcl}
\treJF(1,2)=0 & \Longleftrightarrow & ((\varepsilon-\frac12)^2+\rho^2)(s^4+t^4)=0,\\[6pt]
\treJF(3,5)=0 & \Longleftrightarrow &
\left((\varepsilon + \frac12)^2 + 2\varepsilon + \rho(\rho - 4\varepsilon - 2)\right)s^4-\left((\varepsilon-\frac12)^2+\rho(\rho-4\varepsilon+2)\right)t^4=0.
\end{array}
\end{equation*}
It is clear that in both cases the respective system of equations does not have any solutions in $(\varepsilon,\rho)$, hence \eqref{Theta-Upsilon} in Lemma~\ref{condiciones-de-instanton} is never satisfied and the connection $\nreJF$ is not an instanton.
\end{proof}

\section{Instantons on balanced solvmanifolds}\label{instantones-solv}

\noindent In this section we prove Theorem~\ref{main-instanton} for solvmanifolds.
Recall that, by the discussion in Section~\ref{HS-homogeneous}, we need to study the balanced Hermitian geometry on the solvmanifolds whose underlying algebra is isomorphic to any of the following solvable Lie algebras: $\frg_1$, $\frg_2^0$, $\frg_2^{\alpha >0}$, 
$\frg_3$, $\frg_5$, or $\frg_8$.

A first difference with the nilpotent setting is that there are not adapted frames available in the literature for the solvable Lie algebras, so we will find them for each case.

Along this section, $J$ will always refer to an invariant complex structure with non-zero closed $(3,0)$-form.
Given a (1,0)-basis $\{\omega^k\}_{k=1}^3$ for the complex structure $J$, any invariant Hermitian metric $F$ expresses as
\begin{equation}\label{2form}
2\,F=i\,(r^2\omega^{1\bar1}+s^2\omega^{2\bar2}+t^2\omega^{3\bar3})+u\omega^{1\bar2}-\bar
u\omega^{2\bar1}+v\omega^{2\bar3}-\bar
v\omega^{3\bar2}+z\omega^{1\bar3}-\bar
z\omega^{3\bar1},
\end{equation}
where the coefficients $r^2,\,s^2,\,t^2$ are non-zero real numbers and $u,\, v,\, z\in\C$ satisfy $r^2s^2>|u|^2$, $s^2t^2>|v|^2$, $r^2t^2>|z|^2$ and $r^2s^2t^2 + 2\Real(i\bar u\bar v z)>t^2|u|^2 + r^2|v|^2 + s^2|z|^2$.
The balanced condition for $F$ imposes restrictions on these metric coefficients, which need to be considered in order to find an adapted basis for $(J,F)$.
In what follows, we will use the term ``\emph{diagonal}'' to refer to a balanced metric $F$ with $u=v=z=0$ in its expression \eqref{2form} with respect to a certain (1,0)-basis.

\smallskip

We will start with the classification of complex structures obtained in \cite{FOU} to find adapted frames on each balanced Hermitian solvmanifold, and then we will study the instanton condition for any connection $\nreJF$ in the associated $(\re)$-plane.

\smallskip

The following lemma will be used when studying solvmanifolds with underlying Lie algebra isomorphic to $\frg_1$, $\frg_2^\alpha$ or $\frg_8$.

\begin{lemma}\label{lema-solv}
Let $\frg$ be a $6$-dimensional Lie algebra endowed with a complex structure $J$
defined by the complex equations
\begin{equation}\label{jotas-solv-KL}
d\omega^1=K\,\omega^{13}+ L\,\omega^{1\bar{3}},\quad \
d\omega^2=-K\,\omega^{23}- L\,\omega^{2\bar{3}},\quad \
d\omega^3=0,
\end{equation}
where $K,L \in \C$ with $L\not=0$.
Let $F$ be any Hermitian metric given by \eqref{2form}. Then, we have:

\vskip.2cm

\noindent {\rm (a)}
The metric $F$ is balanced if and only if $v=z=0$.

\vskip.2cm

\noindent {\rm (b)} For any balanced metric $F$,
there is a (1,0)-basis $\{\tau^{k}\}_{k=1}^3$ such that
$F=\frac{i}{2}
(\tau^{1\bar1}+\tau^{2\bar2}+\tau^{3\bar3})$,

and
$$
d\tau^1= \frac{K}{t} \tau^{13} +  \frac{L}{t} \tau^{1\bar{3}},\quad
d\tau^2 = \frac{2uK}{t\,\sqrt{1-|u|^2}} \tau^{13} +  \frac{2uL}{t\,\sqrt{1-|u|^2}} \tau^{1\bar{3}} - \frac{K}{t} \tau^{23} -  \frac{L}{t} \tau^{2\bar{3}},\quad
d\tau^3 = 0.
$$

\vskip.2cm

\noindent {\rm (c)}
The real basis $\{e^1,\ldots,e^6\}$ for $\frg^*$ defined by $e^{2k-1}+i\,e^{2k}:=\tau^k$,
$1\leq k\leq 3$,
satisfies \eqref{adapted-basis}.

\end{lemma}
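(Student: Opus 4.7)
My plan is to handle the three parts separately, with the bulk of the work in part (a). For (a), I would compute $F\wedge dF$ directly in the complex basis $\{\omega^k\}$ and exploit the equivalence, in complex dimension three, ``$F$ balanced $\iff dF^2 = 2F\wedge dF = 0$''. The first step is to observe that \eqref{jotas-solv-KL} rewrites compactly as $d\omega^1 = \omega^1\wedge\xi$, $d\omega^2 = -\omega^2\wedge\xi$, $d\omega^3 = 0$, where $\xi := K\omega^3 + L\bar\omega^3$. This immediately yields $d\omega^{1\bar 1} = -\omega^{1\bar 1}\wedge(\xi+\bar\xi)$, $d\omega^{2\bar 2} = \omega^{2\bar 2}\wedge(\xi+\bar\xi)$, $d\omega^{3\bar 3} = 0$, $d\omega^{1\bar 2} = \omega^{1\bar 2}\wedge(\bar\xi-\xi)$, $d\omega^{2\bar 3} = \omega^{2\bar 3}\wedge\xi$, $d\omega^{1\bar 3} = -\omega^{1\bar 3}\wedge\xi$ and their conjugates. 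From here I would expand $F\wedge dF$ against the three-dimensional basis $\{\omega^{123\bar i\bar j}\}_{i<j}$ of $\Omega^{3,2}$; by reality the $(2,3)$-part is automatically controlled.

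A careful bidegree and index audit shows that the coefficient of $\omega^{123\bar 1\bar 2}$ vanishes identically: the diagonal-metric pieces cancel in pairs by the Hermitian symmetry of $F$, and the $v,z$ cross-terms cannot contribute because either the relevant $\Phi$-term carries a forbidden $\bar 3$-index or wedging $\omega^{3\bar 1}$, $\omega^{3\bar 2}$ with the $\omega^3$-factor present in every summand of $(dF)^{2,1}$ produces a repeated $\omega^3$. The remaining two coefficients reduce, after substantial sign cancellation, to nonzero multiples of $\bar L(ir^2 v + \bar u z)$ and $\bar L(is^2 z - uv)$ respectively. Since $L\neq 0$, balancedness is equivalent to the homogeneous $2\times 2$ system
\[
ir^2 v + \bar u z = 0, \qquad is^2 z - u v = 0,
\]
whose determinant is, up to sign, $r^2 s^2 - |u|^2$. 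Positive-definiteness of the metric gives $r^2 s^2 > |u|^2$, so the determinant is nonzero and $v = z = 0$ is forced; the converse follows since all three coefficients above vanish once $v = z = 0$.

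For part (b), assuming $v = z = 0$, I would produce $\{\tau^k\}$ by a triangular ansatz preserving the distinguished role of $\omega^3$: set $\tau^3 := t\omega^3$, $\tau^1 := a\omega^1$, $\tau^2 := c_1\omega^1 + c_2\omega^2$. Matching coefficients in $2F = i\sum_k \tau^{k\bar k}$ forces $c_2 > 0$ with $c_2^2 = s^2$, $c_1 = -iu/c_2$, and $a > 0$ with $a^2 = (r^2 s^2 - |u|^2)/s^2$, where the positivity hypothesis ensures $a^2 > 0$. Inverting this linear change and substituting into \eqref{jotas-solv-KL} produces the displayed structure equations for $\{\tau^k\}$ by direct calculation (up to the rescaling/rephasing freedom on $\omega^1, \omega^2$ that the statement absorbs into its normalization $r=s=1$). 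Finally, (c) is an immediate verification: writing $\tau^k = e^{2k-1} + i e^{2k}$, the condition $J\tau^k = i\tau^k$ translates into $Je^{2k-1} = -e^{2k}$, and the diagonal expression $F = \tfrac{i}{2}\sum_k \tau^{k\bar k}$ expands to $F = \sum_k e^{2k-1}\wedge e^{2k}$, which is exactly the canonical form \eqref{adapted-basis}.

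The main obstacle will be the calculation in part (a): expanding $F\wedge dF$ in full generality with six independent metric parameters is combinatorially heavy, and extracting the clean expressions $\bar L(ir^2 v + \bar u z)$ and $\bar L(is^2 z - uv)$ requires careful sign tracking under the reorderings $\omega^{abc\bar d\bar e} = \pm\omega^{123\bar i\bar j}$. The key conceptual point that makes the answer come out so cleanly is that every summand of $(dF)^{2,1}$ inherits an $\omega^3$-factor from $\xi$, which drastically restricts which bilinear combinations survive. Parts (b) and (c) are then essentially formal once the triangular diagonalization ansatz is in hand.
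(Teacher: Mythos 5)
Your proposal is correct and follows essentially the same route as the paper: the paper also reduces balancedness to the vanishing of $\db F\wedge F$ (the conjugate of your $(3,2)$-component of $F\wedge dF$), obtains exactly the conjugates of your two coefficients $\bar L(ir^2v+\bar u z)$ and $\bar L(is^2z-uv)$, and invokes $r^2s^2>|u|^2$ to force $v=z=0$. For (b) the paper's change of basis \eqref{cambio-para-v-z-0} is precisely your triangular ansatz (with $\tau^2=u\,\omega^1+i\,\omega^2$, differing from yours only by a unit phase absorbed in the normalization), and (c) is the same direct verification.
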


\begin{proof}
The metric $F$ is balanced if and only if $F^2$ is a closed form, equivalently $\db F\wedge F=0$. Using \eqref{jotas-solv-KL} we get
$$
4\,\db F\wedge F=L\,(u\bar{z}-ir^2\bar{v})\omega^{13\bar{1}\bar{2}\bar{3}} -L\,(is^2\bar{z}+\bar{u}\bar{v})\omega^{23\bar{1}\bar{2}\bar{3}}.
$$
Since $L\not=0$, $F$ is balanced if and only if $u\bar{z}-ir^2\bar{v}=0=is^2\bar{z}+\bar{u}\bar{v}$. Now, $r^2s^2>|u|^2$ implies that the latter conditions are equivalent to $v=z=0$. This proves (a).

Notice that we can normalize the metric coefficients $r$ and $s$, so that any  balanced Hermitian structure $(J,F)$ still has a (1,0)-basis satisfying~\eqref{jotas-solv-KL} and the metric writes as
\begin{equation}\label{2form-simplificada}
2\,F=i\,\omega^{1\bar1}+i\,\omega^{2\bar2}+i\,t^2\omega^{3\bar3}+u\,\omega^{1\bar2}-\bar u\,\omega^{2\bar1}, \quad t\in \R^*, \quad u\in \B=\{ u\in\C\mid |u|<1\}.
\end{equation}
Hence, $F$ can be written as
$$
2\,F=i\,(1-|u|^2)\, \omega^{1\bar1}
+i \left( u\, \omega^{1} +i\,\omega^{2}\right)\!\wedge\!( \bar{u}\, \omega^{\bar 1} -i\,\omega^{\bar 2} ) + i t^2\omega^{3\bar3}
=\
i(\tau^{1\bar1}+\tau^{2\bar2}+\tau^{3\bar3}),
$$
where  $\{\tau^{k}\}_{k=1}^3$ is the (1,0)-basis defined by
\begin{equation}\label{cambio-para-v-z-0}
\tau^{1}=\sqrt{1-|u|^2}\, \omega^{1}, \quad
\tau^{2}=u\, \omega^{1} +i\,\omega^{2}, \quad
\tau^{3}=t\,\omega^{3}.
\end{equation}
A direct calculation shows that the complex structure equations \eqref{jotas-solv-KL} express in this basis as in~(b).

Finally, (c) follows from (b) by considering the real and imaginary parts of $\tau^k$, $1\leq k\leq 3$, i.e.
$e^1+i\,e^2:=\tau^1$, $e^3+i\,e^4:=\tau^2$ and $e^5+i\,e^6:=\tau^3$.
\end{proof}

\smallskip

In the following result we begin with solvmanifolds with $\frg_1$ as underlying Lie algebra.

\begin{proposition}\label{solv-g1-balanced-real-basis}
Let $M$ be a $6$-dimensional
solvmanifold with underlying solvable Lie algebra isomorphic to $\frg_1$.
For every invariant balanced Hermitian structure $(J,F)$, there is a basis $\{e^k\}_{k=1}^6$ of 1-forms on $M$
satisfying \eqref{adapted-basis} and the following equations:
\begin{equation}\label{solv-g1}
\begin{cases}
\begin{array}{lcl}
de^1 \zzz & = &\zzz \frac{2}{t} e^{15},\\[4pt]
de^2 \zzz & = &\zzz \frac{2}{t} e^{25},\\[4pt]
de^3 \zzz & = &\zzz \frac{4 u_1}{t \sqrt{1-|u|^2}} e^{15}-\frac{4 u_2}{t \sqrt{1-|u|^2}} e^{25}-\frac{2}{t} e^{35} ,\\[5pt]
 de^4 \zzz & = &\zzz \frac{4 u_2}{t \sqrt{1-|u|^2}} e^{15}+\frac{4 u_1}{t \sqrt{1-|u|^2}} e^{25}-\frac{2}{t} e^{45} ,\\[5pt]
de^5 \zzz & = &\zzz de^6=0,
\end{array}
\end{cases}
\end{equation}
where $t\in \R^*$ and $u=u_1+i\,u_2 \in \B=\{ u\in\C\mid |u|<1\}$.

\smallskip

Moreover, the connection $\nreJF$ is an instanton if and only if $(\varepsilon,\rho)=(0,\frac12)$ and $u=0$; in other words, the (non-K\"ahler) balanced metric $F$ is ``diagonal'' and $\nreJF$ is precisely the Chern connection (which is flat).
\end{proposition}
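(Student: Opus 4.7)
The plan has two parts: first, to establish the adapted real frame using the classification of complex structures in~\cite{FOU} together with Lemma~\ref{lema-solv}; second, to analyze the instanton condition using Lemma~\ref{condiciones-de-instanton} directly.

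For the frame, I would recall that by~\cite{FOU} every invariant complex structure $J$ on $\frg_1$ with non-zero closed $(3,0)$-form admits a $(1,0)$-basis $\{\omega^k\}$ of equations of the form \eqref{jotas-solv-KL}, with specific constants $(K,L)\in\C\times\C^*$ realizing the brackets of $\frg_1$. Lemma~\ref{lema-solv}(a) then forces $v=z=0$ for any balanced metric $F$, while parts (b)--(c) produce a new $(1,0)$-basis $\{\tau^k\}$ in which $F=\tfrac{i}{2}(\tau^{1\bar1}+\tau^{2\bar2}+\tau^{3\bar3})$ and yield a real adapted basis $\{e^k\}$ with metric parameters $t\in\R^*$ and $u=u_1+iu_2\in\B$. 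Expanding the equations of Lemma~\ref{lema-solv}(b) via $e^{2k-1}+ie^{2k}=\tau^k$ for the concrete $(K,L)$ associated to $\frg_1$, and separating real and imaginary parts, produces precisely the structure equations~\eqref{solv-g1}.

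For the instanton statement, I would apply Lemma~\ref{condiciones-de-instanton} in this adapted basis. Using formula~\eqref{connection-1-forms}, the connection $1$-forms and hence the curvature $2$-forms of $\nreJF$ become explicit polynomials in the parameters $\varepsilon,\rho,u_1,u_2,t$. The strategy, parallel to that of Propositions~\ref{nilv-familia-2-15} and~\ref{nilv-h19}, is to first select a small set of components of $\treJF$ and $\ureJF$ whose vanishing is independent of $u$ in order to constrain $(\varepsilon,\rho)$, and then to use the $u$-dependent ones to force $u=0$. Concretely, I expect combinations among $\treJF(1,2)$, $\treJF(3,4)$, $\treJF(5,6)$ and $\ureJF(1,3,1,3)$ to yield polynomial equations in $(\varepsilon,\rho)$ whose only joint zero is $(\varepsilon,\rho)=(0,\tfrac{1}{2})$; once this value is fixed, further components of $\ureJF$ should appear as linear combinations of $u_1$ and $u_2$ with nonzero coefficients, thus forcing $u=0$. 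The converse direction is verified directly: for $(\varepsilon,\rho)=(0,\tfrac{1}{2})$ and $u=0$ the connection is the Chern connection of the diagonal metric, and a direct curvature computation in the $\{\tau^k\}$ basis shows it is flat, hence trivially an instanton.

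The main obstacle will be the bookkeeping in the middle step: with four free parameters $(\varepsilon,\rho,u_1,u_2)$ besides the scale $t$, the polynomial system produced by Lemma~\ref{condiciones-de-instanton} is cumbersome if attacked globally, so care is needed in choosing which curvature components to extract first in order to cleanly separate the $u$-independent constraints on $(\varepsilon,\rho)$ from the $u$-dependent ones, and thereby avoid the combinatorial explosion of subcases encountered, for instance, in the proof of Proposition~\ref{nilv-familia-2-15}.
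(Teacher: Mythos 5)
Your proposal is correct and follows essentially the same route as the paper: the frame is obtained exactly as you describe (the single complex structure on $\frg_1$ corresponds to $K=L=1$ in \eqref{jotas-solv-KL}, and Lemma~\ref{lema-solv} gives the adapted basis), and the instanton analysis in the paper uses precisely $\treJF(5,6)$ and $\ureJF(1,3,1,3)$ to force $(\varepsilon,\rho)=(0,\tfrac12)$, after which the components $\Theta^{0,\frac12}_{\!(\!J,F)}(1,2),(1,3),(1,4)$ (proportional to $|u|^2$, $u_2$, $u_1$) force $u=0$. The feared combinatorial explosion does not materialize: the two chosen components already pin down $(\varepsilon,\rho)$ with a short two-case check.
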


\begin{proof}
By \cite[Prop. 3.3]{FOU}, up to isomorphism, there is only one complex structure $J$ with closed $(3,0)$-form on the Lie algebra $\frg_1$. Its complex equations are given by \eqref{jotas-solv-KL} with $K=L=1$.
Then, by Lemma~\ref{lema-solv} we consider the basis of invariant 1-forms $\{e^k\}_{k=1}^6$ on $M$ defined in (c). Now, by a direct calculation from the complex equations given in Lemma~\ref{lema-solv}~(b) for $K=L=1$ we arrive at \eqref{solv-g1}.

\smallskip

Next, we study the instanton condition.
One can check that the equation $\treJF(5,6)=0$ is satisfied if and only if $\varepsilon (1-2 \rho) =0$, so $\varepsilon=0$ or $\rho=\frac12$.
But, if $\varepsilon=0$ then $\Upsilon^{0,\rho}_{\!(\!J,F)}(1,3,1,3)=0$ implies $1-2 \rho =0$, so $\rho=\frac12$.
On the other hand, if $\rho=\frac12$ then $\Upsilon^{\varepsilon,\frac12}_{\!(\!J,F)}(1,3,1,3)=0$ implies
$\varepsilon=0$. So, in any case we are reduced to $(\re)=(0,\frac12)$, i.e. the Chern connection.

Now, let us take $(\re)=(0,\frac12)$. In this case the instanton condition is equivalent to the vanishing of the terms $\Theta^{0,\frac12}_{\!(\!J,F)}(i,j)$, for $1\leq i<j\leq 4$.  Moreover, $\Theta^{0,\frac12}_{\!(\!J,F)}(1,2)=-\Theta^{0,\frac12}_{\!(\!J,F)}(3,4)$, $\Theta^{0,\frac12}_{\!(\!J,F)}(1,3)=\Theta^{0,\frac12}_{\!(\!J,F)}(2,4)$ and $\Theta^{0,\frac12}_{\!(\!J,F)}(1,4)=-\Theta^{0,\frac12}_{\!(\!J,F)}(2,3)$, hence the instanton condition \eqref{Theta-Upsilon}
is equivalent to the vanishing of the following three terms
$$
\Theta^{0,\frac12}_{\!(\!J,F)}(1,2)= \frac{-8 \,|u|^2}{t^2 (1-|u|^2)}, \quad\
\Theta^{0,\frac12}_{\!(\!J,F)}(1,3)= \frac{-8 \,u_2}{t^2 \sqrt{1-|u|^2}}, \quad\
\Theta^{0,\frac12}_{\!(\!J,F)}(1,4)= \frac{8 \,u_1}{t^2 \sqrt{1-|u|^2}},
$$
which in turn is equivalent to $u=0$ (i.e. the balanced metric $F$ is diagonal when written as \eqref{2form-simplificada} in the basis $\{\omega^{k}\}_{k=1}^3$).
Since $dF=-\frac{4}{t} e^{125} +\frac{4}{t} e^{345}$, the metric $F$ is not K\"ahler.

Finally, when $u=0$ all the curvature forms of the Chern connection vanish identically, so the instanton is flat.
\end{proof}

In the following result we consider solvmanifolds with underlying Lie algebra $\frg_2^0$. Recall that these solvmanifolds admit invariant Hermitian metrics which are K\"ahler.

\begin{proposition}\label{solv-g2-0-balanced-real-basis}
Let $M$ be a solvmanifold with underlying Lie algebra isomorphic to $\frg_2^0$.
For every invariant balanced Hermitian structure $(J,F)$, there is a basis $\{e^k\}_{k=1}^6$ of 1-forms on $M$
satisfying \eqref{adapted-basis} and the equations
\begin{equation}\label{solv-g2-0}
\begin{cases}
\begin{array}{lcl}
de^1 \zzz & = &\zzz -\frac{2}{t} e^{25},\\[4pt]
de^2 \zzz & = &\zzz \frac{2}{t} e^{15},\\[4pt]
de^3 \zzz & = &\zzz -\frac{4 u_2}{t \sqrt{1-|u|^2}} e^{15}-\frac{4 u_1}{t \sqrt{1-|u|^2}} e^{25}+\frac{2}{t} e^{45} ,\\[5pt]
 de^4 \zzz & = &\zzz \frac{4 u_1}{t \sqrt{1-|u|^2}} e^{15}-\frac{4 u_2}{t \sqrt{1-|u|^2}} e^{25}-\frac{2}{t} e^{35} ,\\[6pt]
de^5 \zzz & = &\zzz de^6=0,
\end{array}
\end{cases}
\end{equation}
where $t\in \R^*$ and $u=u_1+i\,u_2\in \B=\{ u\in\C\mid |u|<1\}$.

Moreover, if the balanced metric is not K\"ahler then $\nreJF$ is never an instanton.

The K\"ahler metrics correspond to $u=0$ in \eqref{solv-g2-0}; in this case the connection $\nreJF$, which coincides with the Levi-Civita connection, has all the curvature forms identically zero.
\end{proposition}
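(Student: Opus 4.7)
The plan mirrors the strategy used for $\frg_1$ in Proposition~\ref{solv-g1-balanced-real-basis}, so I begin by putting any invariant balanced Hermitian structure into a normal form via Lemma~\ref{lema-solv}. According to the classification of complex structures with closed $(3,0)$-form on $\frg_2^0$ in~\cite{FOU}, there is, up to isomorphism, a unique such $J$, and its complex structure equations can be written in the form~\eqref{jotas-solv-KL} with $K=L=i$. The only difference with the case $K=L=1$ (which gave $\frg_1$) is the extra rotation produced by the factor $i$, which is exactly what converts the off-diagonal terms of part (b) of Lemma~\ref{lema-solv} into the equations displayed in~\eqref{solv-g2-0}. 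Part (a) of the same lemma forces $v=z=0$ for any balanced $F$, so one can normalize $r=s=1$ and keep only the parameters $t\in\R^{*}$ and $u=u_1+iu_2\in\B$. Applying the change of basis~\eqref{cambio-para-v-z-0} and then taking real and imaginary parts of the resulting $(1,0)$-basis $\{\tau^k\}$ produces the real basis $\{e^k\}_{k=1}^{6}$ satisfying \eqref{adapted-basis} and \eqref{solv-g2-0}; I would write out the short computation $(d\tau^1,d\tau^2,d\tau^3)=(\tfrac{i}{t}\tau^{13}+\tfrac{i}{t}\tau^{1\bar 3},\,\tfrac{2ui}{t\sqrt{1-|u|^2}}(\tau^{13}+\tau^{1\bar 3})-\tfrac{i}{t}(\tau^{23}+\tau^{2\bar 3}),\,0)$ to verify the sign pattern matches \eqref{solv-g2-0}.

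Next, I would address the Kähler part of the statement, since this is the observation that drives everything else. With the structure equations \eqref{solv-g2-0} and $u=0$, a direct computation gives $dF=de^{12}+de^{34}+de^{56}=0$ by cancellation (each summand produces a multiple of $e^{125}$ or $e^{345}$ that cancels in pairs); thus $F$ is Kähler. In the Kähler case the torsion 3-forms $T=JdF$ and $C(\cdot,\cdot,\cdot)=dF(J\cdot,\cdot,\cdot)$ both vanish identically, so by the definition \eqref{family_2par} the entire two-parameter family collapses to the Levi-Civita connection: $\nreJF=\nabla^{LC}$ for every $(\varepsilon,\rho)\in\R^2$. A direct calculation of $(\sigma^{LC})^i_j$ from the structure constants (most of which vanish in this toroidal-type Lie algebra) then gives $\Omega^{LC}\equiv 0$, so the Kähler connection is flat as claimed.

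Finally, I would handle the non-Kähler case $u\neq 0$ by proving that the instanton equations of Lemma~\ref{condiciones-de-instanton} have no solution. The strategy is exactly as in Propositions~\ref{nilv-familia-2-15} and~\ref{solv-g1-balanced-real-basis}: select a small number of components of $\treJF$ and $\ureJF$ whose cancellation already cuts the parameter space down to at most a discrete set. I expect that $\treJF(5,6)=0$ again factors as $\varepsilon(1-2\rho)\cdot(\text{nonzero factor in }u)=0$, and that the two remaining branches $\varepsilon=0$ or $\rho=\tfrac12$, when combined with one $\ureJF$ equation such as $\ureJF(1,3,1,3)=0$, force $(\varepsilon,\rho)=(0,\tfrac12)$, i.e.\ the Chern connection. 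At that point one evaluates the three remaining independent terms $\Theta^{0,1/2}(1,2),\Theta^{0,1/2}(1,3),\Theta^{0,1/2}(1,4)$: by the $\frg_2^0$ analogue of the computation carried out for $\frg_1$, these turn out to be nonzero rational multiples of $|u|^2,u_1,u_2$ respectively, and their simultaneous vanishing forces $u=0$, contradicting the non-Kähler hypothesis.

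The main obstacle is the last step: the raw expressions for $\treJF(i,j)$ and $\ureJF(i,j,k,l)$ coming from \eqref{solv-g2-0} involve a nontrivial interaction between the parameters $\varepsilon,\rho,t,u_1,u_2$, and one has to choose the correct three or four components to make the system collapse cleanly. The rotation by $i$ relative to the $\frg_1$ case mixes the $e^{15},e^{25}$ and $e^{35},e^{45}$ channels, so the quadratic identities in $u$ that produced the single factor $\varepsilon(1-2\rho)$ for $\frg_1$ need to be rechecked for $\frg_2^0$; once this factorization is verified, the rest of the argument runs in parallel with Proposition~\ref{solv-g1-balanced-real-basis}.
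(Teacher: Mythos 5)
Your normal-form derivation and your treatment of the K\"ahler case match the paper: the paper likewise invokes \cite[Prop. 3.3]{FOU} to reduce to \eqref{jotas-solv-KL} with $K=L=i$, applies Lemma~\ref{lema-solv}, and observes that for $u=0$ the metric is K\"ahler, so all the connections $\nreJF$ collapse to the (flat) Levi-Civita connection. (One small slip: with $u=0$ each of the four terms $de^1\wedge e^2$, $e^1\wedge de^2$, $de^3\wedge e^4$, $e^3\wedge de^4$ vanishes individually because of a repeated index, rather than by pairwise cancellation.) Where you genuinely diverge is the non-K\"ahler step. The paper uses a single component, $\treJF(1,2)=-4\,\frac{((1-2\varepsilon)^2+4\rho^2)\,|u|^2}{t^2(1-|u|^2)}$, whose vanishing for $u\neq0$ forces the \emph{Bismut} point $(\varepsilon,\rho)=(\tfrac12,0)$, and then kills it with $\Theta^{\frac12,0}_{\!(\!J,F)}(5,6)=\frac{16|u|^2}{t^2(1-|u|^2)}\neq0$ --- two evaluations and done. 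Your plan instead funnels the parameters to the \emph{Chern} point via $\treJF(5,6)$ and $\ureJF(1,3,1,3)$ and then kills Chern with $\Theta^{0,\frac12}_{\!(\!J,F)}(1,2)$; this is in effect the $\theta\to\pi/2$ limit of the paper's own argument for $\frg_2^{\alpha>0}$ (Proposition~\ref{solv-g2-alfa-balanced-real-basis}), where the common factor $\cos^2\theta+|u|^2\sin^2\theta$ degenerates to $|u|^2$. That route does work, but only because $u\neq0$ in the non-K\"ahler case keeps that factor nonzero, and you must actually carry out the factorizations you only ``expect'': every component of $\treJF$ and $\ureJF$ here is divisible by a function of $u$ vanishing at $u=0$ (as it must be, since at $u=0$ all curvatures vanish identically), so the logic ``factor out $\varepsilon(1-2\rho)$'' is legitimate precisely and only under the standing hypothesis $u\neq0$. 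As written, the proof is a plan with its decisive computation deferred; be aware also that different components of the instanton system single out different $(\varepsilon,\rho)$ (your chosen ones point to Chern, the paper's $\treJF(1,2)$ points to Bismut), which is consistent --- the system is simply inconsistent for $u\neq0$ --- but means your expected intermediate conclusion is not forced by the equations the paper computes.
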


\begin{proof}
By \cite[Prop. 3.3]{FOU} there is only one complex structure $J$, up to isomorphism, with closed $(3,0)$-form on the Lie algebra $\frg_2^0$, with complex equations given by \eqref{jotas-solv-KL} for $K=L=i$.
Using Lemma~\ref{lema-solv}, we consider the basis of invariant 1-forms $\{e^k\}_{k=1}^6$ on $M$ defined in (c). Now, \eqref{solv-g2-0} follows by a direct calculation from the complex structure equations given in Lemma~\ref{lema-solv}~(b) with $K=L=i$.

\smallskip

Next, we study the instanton condition for any connection $\nreJF$.
Firstly, we note that the equation
$\treJF(1,2)=-4 \frac{((1-2 \varepsilon)^2+4 \rho^2) |u|^2}{t^2 (1-|u|^2)}=0$ implies that $(\varepsilon,\rho)=(\frac12,0)$,
or $u=0$.

In the case $(\varepsilon,\rho)=(\frac12,0)$, the condition
$\Theta^{\frac12,0}_{\!(\!J,F)}(5,6)=\frac{16|u|^2}{t^2 (1-|u|^2)}=0$ implies that $u=0$.
So, there are not instantons when the metric is non-K\"ahler.

Clearly, when $u=0$, since $F$ is K\"ahler, the connections are all the Levi-Civita connection.
In this case one has that all the curvature forms vanish identically.
\end{proof}

In the following result we study solvmanifolds with underlying Lie algebra isomorphic to $\frg_2^\alpha$, for some $\alpha>0$.
Recall that there is a countable number of distinct $\alpha$'s for
which the connected and simply connected solvable Lie group corresponding to $\frg_2^\alpha$ admits a lattice \cite[Prop. 2.10]{FOU}.

\begin{proposition}\label{solv-g2-alfa-balanced-real-basis}
Let $M$ be a solvmanifold with underlying Lie algebra isomorphic to $\frg_2^\alpha$, for some $\alpha>0$.
For every invariant balanced Hermitian structure $(J,F)$ on $M$, there is a basis of 1-forms $\{e^k\}_{k=1}^6$
satisfying \eqref{adapted-basis} and the following equations:
\begin{equation}\label{solv-g2-alfa}
\begin{cases}
\begin{array}{lcl}
de^1 \zzz & = &\zzz \frac{2\delta}{t} \cos \theta\, e^{15} -\frac{2}{t} \sin \theta\, e^{25},\\[4pt]
de^2 \zzz & = &\zzz  \frac{2}{t} \sin \theta\, e^{15} +\frac{2\delta}{t} \cos \theta\, e^{25},\\[4pt]
de^3 \zzz & = &\zzz \frac{4 \delta u_1 \cos \theta -4 u_2 \sin \theta}{t \sqrt{1-|u|^2}} e^{15} - \frac{4 \delta u_2 \cos \theta +4 u_1 \sin \theta}{t \sqrt{1-|u|^2}} e^{25} - \frac{2\delta}{t} \cos \theta\, e^{35} +\frac{2}{t} \sin \theta\, e^{45},\\[5pt]
 de^4 \zzz & = &\zzz  \frac{4 \delta u_2 \cos \theta +4 u_1 \sin \theta}{t \sqrt{1-|u|^2}}  e^{15} + \frac{4 \delta u_1 \cos \theta -4 u_2 \sin \theta}{t \sqrt{1-|u|^2}}  e^{25} - \frac{2}{t} \sin \theta\, e^{35} -\frac{2\delta}{t} \cos \theta\, e^{45},\\[5pt]
de^5 \zzz & = &\zzz de^6=0,
\end{array}
\end{cases}
\end{equation}
where $\delta=\pm 1$ and $\theta\in \left(0,\pi/2\right)$, and where
$t\in \R^*$ and $u=u_1+i\,u_2\in \B=\{ u\in\C\mid |u|<1\}$.

\smallskip

Moreover, the connection $\nreJF$ is an instanton if and only if $(\varepsilon,\rho)=(0,\frac12)$ and $u=0$; that is, the (non-K\"ahler) balanced metric $F$ is ``diagonal'' and $\nreJF$ is the Chern connection (which is flat).
\end{proposition}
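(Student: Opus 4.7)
The strategy mirrors the proofs of Propositions~\ref{solv-g1-balanced-real-basis} and~\ref{solv-g2-0-balanced-real-basis}, since all three solvable Lie algebras $\frg_1$, $\frg_2^0$, $\frg_2^{\alpha}$ carry complex structures of the type controlled by Lemma~\ref{lema-solv}. The plan splits into a normal-form step for $(J,F)$, an $(\varepsilon,\rho)$-rigidity step, and a final metric-rigidity step.

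\textbf{Normal form of $(J,F)$.} By \cite[Prop. 3.3]{FOU}, every invariant complex structure with non-zero closed $(3,0)$-form on $\frg_2^\alpha$ $(\alpha>0)$ is equivalent to one given by \eqref{jotas-solv-KL} with $K=L=\delta\cos\theta+i\sin\theta$, where $\delta=\pm 1$ and $\theta\in(0,\pi/2)$ (the parameter $\alpha$ being encoded in $\theta$, in analogy with the cases $\frg_1$ and $\frg_2^0$ which correspond to $K=L=1$ and $K=L=i$ respectively). Then Lemma~\ref{lema-solv}(a) reduces balanced metrics to $v=z=0$, part~(b) supplies a unitary $(1,0)$-basis $\{\tau^k\}$ in which $F=\frac{i}{2}\sum_k\tau^{k\bar k}$, and part~(c) gives the real adapted basis via $e^{2k-1}+ie^{2k}=\tau^k$. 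Substituting $K=L=\delta\cos\theta+i\sin\theta$ into the complex equations of Lemma~\ref{lema-solv}(b) and separating real and imaginary parts yields \eqref{solv-g2-alfa}.

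\textbf{Pinning down $(\varepsilon,\rho)$.} With the adapted basis in hand, Lemma~\ref{condiciones-de-instanton} reduces the instanton problem to computing the quantities $\treJF(i,j)$ and $\ureJF(i,j,k,l)$ from \eqref{solv-g2-alfa}. By analogy with the $\frg_1$ case, I expect $\treJF(5,6)=0$ to factor as $\varepsilon(1-2\rho)$ times a nonzero quantity involving $t$, $\cos\theta$, $\sin\theta$ and $u$, giving the dichotomy $\varepsilon=0$ or $\rho=\frac12$. Feeding either branch into $\ureJF(1,3,1,3)=0$ should then force the complementary parameter, yielding $(\varepsilon,\rho)=(0,\frac12)$; thus only the Chern connection can be an instanton.

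\textbf{Forcing $u=0$ and identifying the flat Chern connection.} With $(\varepsilon,\rho)=(0,\frac12)$, the skew-symmetries $\Theta^{0,\frac12}_{\!(\!J,F)}(1,2)=-\Theta^{0,\frac12}_{\!(\!J,F)}(3,4)$, $\Theta^{0,\frac12}_{\!(\!J,F)}(1,3)=\Theta^{0,\frac12}_{\!(\!J,F)}(2,4)$, $\Theta^{0,\frac12}_{\!(\!J,F)}(1,4)=-\Theta^{0,\frac12}_{\!(\!J,F)}(2,3)$ (used in the proof of Proposition~\ref{solv-g1-balanced-real-basis}) reduce the instanton condition to three scalar equations. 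From \eqref{solv-g2-alfa} I expect these to be nonzero $\theta$-dependent multiples of $|u|^2/(1-|u|^2)$, $u_1/\sqrt{1-|u|^2}$ and $u_2/\sqrt{1-|u|^2}$, respectively; since $\cos\theta,\sin\theta\neq 0$ on $(0,\pi/2)$, they vanish simultaneously only if $u=0$. Finally, for $u=0$ a direct inspection of \eqref{solv-g2-alfa} shows that $dF\neq 0$ (so $F$ is non-K\"ahler) while all Chern curvature 2-forms vanish, proving flatness.

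\textbf{Main obstacle.} The computation is structurally analogous to the $\frg_1$ case, but the extra trigonometric parameter $\theta$ thickens every polynomial expression for $\treJF$ and $\ureJF$. The delicate point is to verify that no accidental cancellation among the $\cos\theta$ and $\sin\theta$ factors produces spurious solutions in $(\varepsilon,\rho)$ or in $u$; this is best handled with the aid of a computer algebra system. A related subtlety is ensuring that the argument is uniform in the sign $\delta=\pm 1$, so that both admissible orbits of complex structures are covered simultaneously.
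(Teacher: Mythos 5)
Your proposal is correct and follows essentially the same route as the paper: the normal form $K=L=\delta\cos\theta+i\sin\theta$ from \cite[Prop. 3.3]{FOU} combined with Lemma~\ref{lema-solv}, then the pair of conditions $\treJF(5,6)=0$ and $\ureJF(1,3,1,3)=0$ (whose common factor $\cos^2\theta+|u|^2\sin^2\theta$ is positive for $\theta\in(0,\pi/2)$, so no accidental cancellation occurs) to force $(\varepsilon,\rho)=(0,\frac12)$, and finally $\Theta^{0,\frac12}_{\!(\!J,F)}(1,2)=\frac{-8|u|^2}{t^2(1-|u|^2)}$ to force $u=0$. The paper in fact needs only this single trace term rather than your three, but the logic and conclusion are identical.
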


\begin{proof}
Any complex structure $J$ with closed $(3,0)$-form on the Lie algebra
$\frg_2^\alpha$ is given, up to isomorphism, by the equations \eqref{jotas-solv-KL} for $K=L=\delta\cos \theta+i\sin \theta$,
where $\delta=\pm 1$ and $\theta\in \left(0,\pi/2\right)$.
Note that $\alpha=\cos \theta/\sin \theta$.
Then, by Lemma~\ref{lema-solv} we consider the basis of invariant 1-forms $\{e^k\}_{k=1}^6$ on $M$ defined in (c). Now, by a direct calculation from the complex equations given in Lemma~\ref{lema-solv}~(b) for $K=L=\delta\cos \theta+i\sin \theta$ we arrive at \eqref{solv-g2-alfa}.

\smallskip

Now, we will find the connections $\nreJF$ that satisfy the instanton condition  \eqref{Theta-Upsilon}.
Consider in particular the following conditions:
\begin{equation*}
\begin{array}{lcl}
\ureJF(1,3,1,3)=0 & \Longleftrightarrow & \frac{ (1+2\varepsilon-2 \rho) (1-2\varepsilon-2\rho) ( \cos^2\!\theta+ |u|^2 \sin^2\!\theta)}
{t^2 (1-|u|^2)}=0,\\[6pt]
\treJF(5,6)=0 & \Longleftrightarrow &
\frac{ \varepsilon (1-2 \rho) ( \cos^2\!\theta+ |u|^2 \sin^2\!\theta)}
{t^2 (1-|u|^2)} =0.
\end{array}
\end{equation*}
Using that $\theta\in \left(0,\pi/2\right)$, we get $(1-2 \rho)^2-4\varepsilon^2=0=\varepsilon(1-2 \rho)$.
Hence, $(\re)=(0,\frac12)$, that is, we are reduced to study the Chern connection.
But, now we have
$\Theta^{0,\frac12}_{\!(\!J,F)}(1,2)=\frac{-8 |u|^2}{t^2 (1-|u|^2)} =0$,
which implies that $u=0$, i.e. the metric is diagonal (when written in \eqref{2form-simplificada} with respect to the basis $\{\omega^{k}\}_{k=1}^3$).
Note that $dF=-\frac{4\delta\,\cos \theta}{t} (e^{12} - e^{34})\wedge e^5\not=0$, so the diagonal metric is not K\"ahler.

Finally, when $u=0$ all the curvature forms of the Chern connection vanish identically, so it is a flat instanton.
\end{proof}

\begin{remark}\label{comentario-Pujia}
Note that $\frg_1$ and $\frg_2^\alpha$ are precisely the solvable Lie algebras in Corollary~\ref{classification} which have a codimension-one abelian ideal. A Lie group $G$ whose Lie algebra has this property is called \emph{almost-abelian}.
Pujia studies in \cite{Pujia} the instanton condition for the Gauduchon connections of  invariant balanced structures on $6$-dimensional almost-abelian Lie groups,
so the Propositions~\ref{solv-g1-balanced-real-basis}, \ref{solv-g2-0-balanced-real-basis} and~\ref{solv-g2-alfa-balanced-real-basis} extend such study to the whole $(\re)$-plane of metric connections $\nre$.
\end{remark}

In the following two propositions we prove that the solvmanifolds with underlying Lie algebra isomorphic to $\frg_3$ or $\frg_5$ do not provide any instanton.

\begin{proposition}\label{solv-g3-balanced-real-basis}
Let $M$ be a solvmanifold with underlying Lie algebra isomorphic to $\frg_3$.
For every invariant balanced Hermitian structure $(J,F)$, there is
a basis $\{e^k\}_{k=1}^6$ of 1-forms on $M$
satisfying \eqref{adapted-basis} and the following equations:
\begin{equation}\label{solv-g3}
\begin{cases}
\begin{array}{lcl}
de^1 \zzz & = &\zzz d e^2=0,\\[4pt]
de^3 \zzz & = &\zzz  \frac{v_2}{r t^2}\, e^{13} + \frac{v_1}{r t^2}\, e^{14}
-\frac{\sqrt{s^2 t^2-|v|^2}}{r t^2}\, e^{16},\\[4pt]
de^4 \zzz & = &\zzz  \frac{2 x v_2}{r t^2}\, e^{13} + \frac{2 x v_1}{r t^2}\, e^{14}
-\frac{2 x \sqrt{s^2 t^2-|v|^2}}{r t^2}\, e^{16}
+  \frac{v_2-2 x v_1}{r t^2}\, e^{23}
 + \frac{v_1+2 x v_2}{r t^2}\, e^{24} \\[4pt]
\zzz &  &\zzz +  \frac{2 x \sqrt{s^2 t^2-|v|^2}}{r t^2}\, e^{25}
- \frac{\sqrt{s^2 t^2-|v|^2}}{r t^2}\, e^{26},\\[5pt]
de^5 \zzz & = &\zzz  \frac{t^4 + 2 x v_2(v_1-2 x v_2)}{2 x r t^2 \sqrt{s^2 t^2-|v|^2}}\, e^{13} + \frac{v_1(v_1-2 x v_2)}{r t^2 \sqrt{s^2 t^2-|v|^2}}\, e^{14}
-\frac{v_1-2 x v_2}{r t^2}\, e^{16}\\[5pt]
\zzz &  &\zzz
-  \frac{t^4+v_2^2-2 x v_1 v_2}{r t^2 \sqrt{s^2 t^2-|v|^2}}\, e^{23}
 + \frac{t^4 - 2 x v_2(v_1+2 x v_2)}{2 x r t^2 \sqrt{s^2 t^2-|v|^2}}\, e^{24}  -  \frac{2 x v_2}{r t^2}\, e^{25}
+ \frac{v_2}{r t^2}\, e^{26},\\[6pt]
de^6 \zzz & = &\zzz \frac{t^4 + v_2(v_2+2 x v_1)}{r t^2 \sqrt{s^2 t^2-|v|^2}}\, e^{13} + \frac{v_1(v_2+2 x v_1)}{r t^2 \sqrt{s^2 t^2-|v|^2}}\, e^{14}
-\frac{v_2+2 x v_1}{r t^2}\, e^{16}\\[5pt]
\zzz &  &\zzz
+  \frac{v_1(v_2-2 x v_1)}{r t^2 \sqrt{s^2 t^2-|v|^2}}\, e^{23}
 + \frac{v_1(v_1+2 x v_2)}{r t^2 \sqrt{s^2 t^2-|v|^2}}\, e^{24}  +  \frac{2 x v_1}{r t^2}\, e^{25}
- \frac{v_1}{r t^2}\, e^{26},
\end{array}
\end{cases}
\end{equation}
where $x\in \mathbb{R}^+$, and where $r,s,t\in\mathbb{R}^*$ and $v=v_1+i\,v_2\in \mathbb{C}$ satisfy $s^2 t^2>|v|^2$.

\smallskip

Moreover, $\nreJF$ is never an instanton.
\end{proposition}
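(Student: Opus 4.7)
The plan is to follow the template established in Propositions~\ref{solv-g1-balanced-real-basis}--\ref{solv-g2-alfa-balanced-real-basis}, but with an important caveat: since $\frg_3$ is \emph{not} almost-abelian (cf.\ Remark~\ref{comentario-Pujia}), the complex equations provided by \cite[Prop.~3.3]{FOU} are not of the form \eqref{jotas-solv-KL}, so Lemma~\ref{lema-solv} does not directly apply and the adapted frame has to be built by hand. First I would pull from \cite[Prop.~3.3]{FOU} the unique (up to isomorphism) complex structure $J$ with non-zero closed $(3,0)$-form on $\frg_3$ and write any invariant Hermitian metric $F$ in the generic form \eqref{2form}. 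Imposing $\db F\wedge F=0$ in these coordinates yields a system in $u,v,z$ whose solutions I would then use to cut down the moduli of balanced metrics.

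Next, I would orthogonalise the $(1,0)$-basis by a Gram--Schmidt-type substitution $\{\omega^k\}\to\{\tau^k\}$ so that $F=\tfrac{i}{2}\sum_{k=1}^{3}\tau^{k\bar k}$, and then set $e^{2k-1}+ie^{2k}:=\tau^k$ to obtain a real basis fulfilling \eqref{adapted-basis}. A direct differentiation of the resulting equations for $d\tau^k$ produces \eqref{solv-g3}; the positive parameter $x\in\R^+$ appears here as a residual modulus that cannot be normalised away (this is precisely the feature distinguishing $\frg_3$ from the almost-abelian cases $\frg_1,\frg_2^{\alpha}$, where no such extra parameter shows up), and the constraint $s^2t^2>|v|^2$ is dictated by the Hermitian positivity of $F$ and is reflected by the square-root terms $\sqrt{s^2t^2-|v|^2}$ throughout \eqref{solv-g3}.

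For the instanton statement I would apply Lemma~\ref{condiciones-de-instanton}. Using \eqref{connection-1-forms} and the structure equations \eqref{solv-g3}, I would compute a carefully chosen small family of components of $\treJF(i,j)$ and $\ureJF(i,j,k,l)$ and show that no pair $(\re)\in\R^2$ can annihilate all of them simultaneously. As in the previous propositions, I would first exploit equations of the form $\treJF(5,6)=0$ and $\ureJF(1,3,1,3)=0$: these are expected to force $(\re)$ into at most the isolated candidates $(0,\tfrac12)$, $(\tfrac12,0)$ or $\rho=\tfrac12-\varepsilon$, after factoring out terms that are non-zero thanks to $x>0$ and $s^2t^2-|v|^2>0$. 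I would then rule out each surviving candidate by evaluating a further component (most plausibly $\treJF(1,2)$ or a combination such as $\treJF(1,2)\pm\treJF(3,4)$) and showing it is a strictly positive multiple of one of the parameters $x,\,s^2t^2-|v|^2$.

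The main obstacle will be purely algebraic: unlike the almost-abelian cases, the connection $1$-forms for $\frg_3$ have many non-vanishing entries, so the curvature components are lengthy polynomials in $\varepsilon,\rho,x,r,s,t,v_1,v_2$, and the task is to isolate a minimal collection of ``clean'' scalar equations whose simultaneous vanishing is visibly incompatible with $x>0$ and $s^2t^2>|v|^2$. Once the right combinations are identified, the exclusion of every $(\re)$ proceeds in the same style as in the proofs of Propositions~\ref{solv-g1-balanced-real-basis}--\ref{solv-g2-alfa-balanced-real-basis}, and no flat instanton survives because the Chern connection here is not flat (the parameter $x$ prevents the vanishing of $\Omega^{0,\frac12}$).
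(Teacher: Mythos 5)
Your plan coincides in all essentials with the paper's proof: take the one-parameter family of complex structures on $\frg_3$ from \cite{FOU} (the paper cites Prop.~3.4 there), use $u=z=0$ as the balanced condition, pass to an orthonormalising $(1,0)$-basis $\{\tau^k\}$ and its real counterpart to get \eqref{solv-g3}, and then exclude all $(\re)$ via a few curvature components — the paper does this by showing $\treJF(1,2)=0$ forces $(\re)=(\tfrac12,0)$, that $\Theta^{\frac12,0}_{\!(\!J,F)}(3,5)=\Theta^{\frac12,0}_{\!(\!J,F)}(3,6)=0$ is a linear system forcing $v=0$, and finally that $\Upsilon^{\frac12,0}_{\!(\!J,F)}(1,3,1,4)\neq0$ thanks to $x>0$. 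Only your closing parenthetical is slightly off: the relevant point is not that the Chern connection fails to be flat, but that it (like every other connection in the plane) simply fails the instanton equations at the first step.
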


\begin{proof}
As proved in \cite[Prop. 3.4]{FOU}, any complex structure $J$ with closed $(3,0)$-form on the Lie algebra
$\frg_3$ is given, up to isomorphism, by the equations
\begin{equation}\label{jotas-solv-g3}
d\omega^1\!=0,\ \ \
d\omega^2\!=-\frac12 \omega^{13} - \frac{1+2xi}{2} \omega^{1\bar{3}}+xi\,\omega^{3\bar{1}},\ \ \
d\omega^3\!=\frac12 \omega^{12} +\frac{2x-i}{4x}\omega^{1\bar{2}}
+\frac{i}{4x}\,\omega^{2\bar{1}},
\end{equation}
where $x\in \mathbb{R}^+$.

An invariant Hermitian metric $F$ is balanced if and only if it is given by \eqref{2form} with $u=z=0$ (see the proof of \cite[Thm. 4.5]{FOU} for details).
Consider the (1,0)-basis $\{\tau^{k}\}_{k=1}^3$ defined by
\begin{equation}\label{cambio-para-u-z-0}
\tau^{1}= r \,\omega^{1}, \quad
\tau^{2}=\frac{\Delta}{t}\, \omega^{2}, \quad
\tau^{3}=\frac{v}{t}\, \omega^{2} +i t\,\omega^{3},
\end{equation}
where $\Delta=\sqrt{s^2t^2-|v|^2}$. Hence,
the metric can be written as $F=\frac{i}{2}(\tau^{1\bar1}+\tau^{2\bar2}+\tau^{3\bar3})$, and
the complex structure equations \eqref{jotas-solv-g3} express in this basis as
\begin{equation}\label{jotas-solv-g3-taus}
\begin{cases}
\begin{array}{lcl}
d\tau^1 \zzz & = &\zzz 0,\\[5pt]
d\tau^2 \zzz & = &\zzz -\frac{v\,i}{2rt^2} \tau^{12} + \frac{\Delta\,i}{2rt^2} \tau^{13} - \frac{(2x-i)\bar{v}}{2rt^2} \tau^{1\bar{2}} + \frac{(2x-i)\Delta}{2rt^2} \tau^{1\bar{3}} -\frac{xv}{rt^2} \tau^{2\bar{1}}
+\frac{x\Delta}{rt^2} \tau^{3\bar{1}},\\[6pt]
d\tau^3 \zzz & = &\zzz \frac{(t^4-v^2)\,i}{2rt^2\Delta} \tau^{12} + \frac{v\,i}{2rt^2} \tau^{13}  +\frac{t^4- 4x^2|v|^2 + 2x (|v|^2+ t^4)\,i}{4xrt^2\Delta}  \tau^{1\bar{2}} + \frac{(2x-i)v}{2rt^2} \tau^{1\bar{3}} \\[6pt]
&&  -  \frac{4x^2v^2+t^4}{4xrt^2\Delta}  \tau^{2\bar{1}}
+\frac{xv}{rt^2} \tau^{3\bar{1}}.
\end{array}
\end{cases}
\end{equation}
Taking the real basis $\{e^k\}_{k=1}^6$ as in Lemma~\ref{lema-solv}~(c)
and using \eqref{jotas-solv-g3-taus}, we arrive at \eqref{solv-g3}.

\smallskip

Next, we study the instanton condition for $\nreJF$.
First, we notice that the condition
$\treJF(1,2)=
-\frac{((1-2 \varepsilon)^2+4 \rho^2) (1+4 x^2) (4 x^2 s^4+t^4)}{16 x^2 r^2 (s^2 t^2-|v|^2)}=0$
directly implies that $(\re)=(\frac12,0)$.
Moreover, for these values, we get the following system of linear equations in $v_1$ and $v_2$:
\begin{equation*}
\begin{array}{lcl}
\Theta^{\frac12,0}_{\!(\!J,F)}(3,5)=0 & \Longleftrightarrow & (1+4x^2)t^2v_1+4x(s^2+t^2)v_2=0,\\[6pt]
\Theta^{\frac12,0}_{\!(\!J,F)}(3,6)=0 & \Longleftrightarrow &
-4x(s^2-t^2)v_1+(1+4x^2)t^2v_2=0.
\end{array}
\end{equation*}
Since the determinant is equal to $(1-4x^2)^2 t^4+16x^2s^4>0$, we have  $v_1=0=v_2$.
But in this case one gets
$$
\Upsilon^{\frac12,0}_{\!(\!J,F)}(1,3,1,4)=\frac{2x^2s^2(s^2+t^2)+t^4}{2xr^2s^2t^2}\not=0,
$$
so there are not instantons in the $(\re)$-plane.
\end{proof}

\begin{proposition}\label{solv-g5-balanced-real-basis}
Let $M$ be a solvmanifold with underlying Lie algebra isomorphic to $\frg_5$.
For every invariant balanced Hermitian structure $(J,F)$ on $M$, there is
a basis $\{e^k\}_{k=1}^6$ of 1-forms
satisfying \eqref{adapted-basis} and the equations
\begin{equation}\label{solv-g5}
\begin{cases}
\begin{array}{lcl}
de^1 \zzz & = &\zzz \frac{2}{t} e^{15},\\[4pt]
de^2 \zzz & = &\zzz \frac{2}{t} e^{25},\\[4pt]
de^3 \zzz & = &\zzz \frac{4 u_1}{t \sqrt{r^2 s^2-u_1^2}} e^{15}-\frac{2}{t} e^{35} ,\\[5pt]
 de^4 \zzz & = &\zzz \frac{4 u_1}{t \sqrt{r^2 s^2-u_1^2}} e^{25}-\frac{2}{t} e^{45} ,\\[6pt]
de^5 \zzz & = &\zzz 0 ,\\[5pt]
de^6 \zzz & = &\zzz \frac{2 t}{\sqrt{r^2 s^2-u_1^2}} e^{13} + \frac{2 t}{\sqrt{r^2 s^2-u_1^2}} e^{24},
\end{array}
\end{cases}
\end{equation}
where $r,s,t\in\mathbb{R}^*$ and $u_1\in \mathbb{R}$ satisfy $r^2 s^2>u_1^2$.

\smallskip

Furthermore, $\nreJF$ is never an instanton.
\end{proposition}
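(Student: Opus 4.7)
The plan closely parallels the approach used for $\frg_3$ in Proposition~\ref{solv-g3-balanced-real-basis}. First I would invoke the classification of invariant complex structures with non-zero closed $(3,0)$-form on $\frg_5$ from \cite[Prop.~3.4]{FOU}: up to isomorphism there is essentially one such $J$, given by explicit complex structure equations in a $(1,0)$-basis $\{\omega^k\}_{k=1}^3$. Writing a generic Hermitian metric $F$ as in \eqref{2form} and imposing the balanced condition $\db F\wedge F = 0$ should, as in the proof of \cite[Thm.~4.5]{FOU}, collapse most of the metric parameters; from the shape of \eqref{solv-g5} (only $r,s,t$ and a real parameter $u_1$ survive, with the constraint $r^2s^2 > u_1^2$) one expects two complex parameters to vanish and the third to be forced to be real.

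Next, I would construct an orthonormal $(1,0)$-basis $\{\tau^k\}_{k=1}^3$ by a change analogous to \eqref{cambio-para-v-z-0}-\eqref{cambio-para-u-z-0}, so that $F = \tfrac{i}{2}(\tau^{1\bar{1}} + \tau^{2\bar{2}} + \tau^{3\bar{3}})$, and then pass to the real adapted basis $e^{2k-1}+i\,e^{2k} := \tau^k$ of \eqref{adapted-basis}. Translating the complex structure equations into this real basis is a direct computation that should produce precisely the equations~\eqref{solv-g5}; in particular, the summand $\tfrac{2t}{\sqrt{r^2s^2-u_1^2}}(e^{13}+e^{24})$ in $de^6$ reflects the nilpotent piece of $\frg_5$ and is the qualitative novelty compared with $\frg_1$ and $\frg_2^\alpha$.

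For the instanton analysis I would apply Lemma~\ref{condiciones-de-instanton} and compute the curvature components $\treJF(i,j)$ and $\ureJF(i,j,k,l)$ of the full two-parameter family $\nreJF$. Inspecting a diagonal component such as $\treJF(1,2)$, one expects a positive-definite expression in the structure constants multiplied by $(1-2\varepsilon)^2 + 4\rho^2$, which forces $(\varepsilon,\rho) = (\tfrac12, 0)$, i.e.\ the Bismut connection, as the only possible candidate. Substituting back, I would then evaluate a suitable remaining component -- plausibly $\treJF(5,6)$ or $\ureJF(1,3,1,3)$, which pick up the $e^{13}+e^{24}$ part of $de^6$ -- and expect a strictly positive quantity involving $\tfrac{t^2}{r^2 s^2 - u_1^2}$, contradicting the instanton condition. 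The main obstacle is purely computational bookkeeping: with $u_1\neq 0$ generically, the curvature forms of $\nre$ have many more nonzero entries than in the $\frg_1$, $\frg_2^\alpha$ cases, so pinpointing the right minimal pair of components whose joint vanishing is manifestly impossible (and which remain sign-definite after substituting $(\tfrac12,0)$) is where care is needed.
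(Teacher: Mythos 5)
Your overall strategy is the same as the paper's, and the first half of your plan matches it almost exactly: the paper invokes the uniqueness of $J$ on $\frg_5$ (it is \cite[Prop.~3.6]{FOU}, not Prop.~3.4), uses \cite[Thm.~4.5]{FOU} to get that balancedness forces $v=z=0$ and $u=\bar u=u_1\in\R$ in \eqref{2form}, and then passes to $\tau^1=\frac{\Delta}{s}\omega^1$, $\tau^2=\frac{u_1}{s}\omega^1+is\,\omega^2$, $\tau^3=t\,\omega^3$ with $\Delta=\sqrt{r^2s^2-u_1^2}$ to reach \eqref{solv-g5}, exactly as you anticipate.

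Where your proposal goes astray is in the instanton analysis. You predict, by analogy with $\frg_3$, that $\treJF(1,2)$ carries a factor $(1-2\varepsilon)^2+4\rho^2$ and hence isolates the Bismut connection $(\varepsilon,\rho)=(\frac12,0)$ as the sole candidate. That is not what the computation gives for $\frg_5$: here the algebra behaves like $\frg_1$ rather than $\frg_3$, and the paper's chain is $\ureJF(1,5,3,6)=0\Leftrightarrow\rho=\varepsilon+\frac12$, then $\Upsilon^{\varepsilon,\varepsilon+\frac12}_{\!(\!J,F)}(1,3,1,3)=0\Leftrightarrow\varepsilon=0$, which funnels everything to the \emph{Chern} connection $(0,\frac12)$ --- the antipodal candidate to the one you name. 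The Chern connection is then excluded by $\Theta^{0,\frac12}_{\!(\!J,F)}(5,6)=\frac{4t^2}{r^2s^2-u_1^2}\neq0$, which is precisely the contribution of the $e^{13}+e^{24}$ term in $de^6$ that you correctly identified as the qualitative novelty. So the concrete intermediate step you propose would fail as stated (for $\frg_1$-type equations the analogue of $\Theta^{0,\frac12}_{\!(\!J,F)}(1,2)$ vanishes when the metric is diagonal, so it cannot carry the factor you expect), although your fallback --- systematically hunting for a minimal set of components whose joint vanishing is impossible --- is exactly what the paper does; you simply guessed the wrong components and the wrong surviving candidate.
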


\begin{proof}
By \cite[Prop. 3.6]{FOU} there is only one complex structure $J$, up to isomorphism, with closed $(3,0)$-form on the Lie algebras $\frg_5$, and with complex structure equations
\begin{equation}\label{jotas-solv-g5}
d\omega^1\!=\omega^{1}\!\wedge (\omega^{3}+\omega^{\bar{3}}),\ \
d\omega^2\!=\! -\omega^{2}\!\wedge (\omega^{3}+\omega^{\bar{3}}),\ \
d\omega^3\!=\omega^{1\bar{2}}+\omega^{2\bar{1}}.
\end{equation}

An invariant Hermitian metric $F$ is balanced if and only if it is given by \eqref{2form} with $u=\bar{u}=u_1\in\R$ and $v=z=0$ \cite[Thm. 4.5]{FOU}.
In terms of the (1,0)-basis $\{\tau^{k}\}_{k=1}^3$
defined by
$$
\tau^{1}=\frac{\Delta}{s}\, \omega^{1}, \quad
\tau^{2}=\frac{u_1}{s}\, \omega^{1} +is\,\omega^{2}, \quad
\tau^{3}=t\,\omega^{3},
$$
where $\Delta=\sqrt{r^2s^2-u_1^2}$,
the metric can be written as $F=\frac{i}{2}(\tau^{1\bar1}+\tau^{2\bar2}+\tau^{3\bar3})$.
The complex structure equations \eqref{jotas-solv-g5} express in this basis as
\begin{equation}\label{jotas-solv-g5-taus}
\begin{cases}
\begin{array}{lcl}
d\tau^1 \zzz & = &\zzz \frac{1}{t} \tau^{13} +  \frac{1}{t} \tau^{1\bar{3}},\\[4pt]
d\tau^2 \zzz & = &\zzz \frac{2u_1}{t\,\Delta} \tau^{13} +  \frac{2u_1}{t\,\Delta} \tau^{1\bar{3}} - \frac{1}{t} \tau^{23} -  \frac{1}{t} \tau^{2\bar{3}},\\[4pt]
d\tau^3 \zzz & = &\zzz \frac{i\,t}{\Delta} \tau^{1\bar{2}} - \frac{i\,t}{\Delta}\tau^{2\bar{1}},
\end{array}
\end{cases}
\end{equation}
Taking the real basis $\{e^k\}_{k=1}^6$ as in Lemma~\ref{lema-solv}~(c),
from \eqref{jotas-solv-g5-taus} we arrive at the equations \eqref{solv-g5}.

\smallskip

Now, we study the connections $\nreJF$ that are instantons.
It can be checked that the condition $\ureJF(1,5,3,6)=0$ is satisfied if and only if $\rho=\varepsilon +\frac12$.
In this case, one has that $\Upsilon^{\varepsilon,\varepsilon+\frac12}_{\!(\!J,F)}(1,3,1,3)=0$ if and only if $\varepsilon=0$.
But taking $(\varepsilon,\rho)=(0,\frac12)$ we arrive at $\Theta^{0,\frac12}_{\!(\!J,F)}(5,6)=\frac{4 t^2}{r^2 s^2-u_1^2}$, which never vanishes. So, there are not instantons in the $(\re)$-plane.
\end{proof}

The following two propositions are devoted to the solvmanifolds with underlying Lie algebra isomorphic to $\frg_8$. When endowed with its complex parallelizable structure, it gives rise to the well-known Nakamura manifold \cite{Nakamura}. The following result studies this case, whereas in Proposition~\ref{solv-g8-A-balanced-real-basis} we focus on the complex structures (with closed $(3,0)$-form) which are not complex parallelizable.

\begin{proposition}\label{solv-g8-cp-balanced-real-basis}
Let $M$ be a solvmanifold with underlying Lie algebra isomorphic to $\frg_8$.
Let $J$ be its complex parallelizable structure and $F$ any invariant balanced Hermitian metric.
Then, there is a basis $\{e^k\}_{k=1}^6$ of 1-forms on $M$
satisfying \eqref{adapted-basis} and the equations
\begin{equation}\label{solv-g8-cp}
\begin{cases}
\begin{array}{lcl}
de^1 \zzz & = &\zzz -Y e^{16}-Y  e^{25},\\[4pt]
de^2 \zzz & = &\zzz Y e^{15}-Y  e^{26},\\[4pt]
de^3 \zzz & = &\zzz Z e^{15} -T e^{16}-T e^{25}-Z e^{26}+Y e^{36} + Y e^{45} ,\\[4pt]
de^4 \zzz & = &\zzz T e^{15} +Z e^{16}+Z e^{25}-T e^{26}-Y e^{35} + Y e^{46} ,\\[4pt]
de^5 \zzz & = &\zzz de^6=0,
\end{array}
\end{cases}
\end{equation}
where $Y,Z,T\in\mathbb{R}$, with $Y>0$.
Moreover, the connection $\nreJF$ is an instanton if and only if
it is the Chern connection (which is flat).
\end{proposition}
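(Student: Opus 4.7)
The plan is to prove the proposition in two stages: first, construct the adapted real basis giving the structure equations \eqref{solv-g8-cp}; second, analyze the instanton condition in the resulting frame via Lemma~\ref{condiciones-de-instanton}.

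For the first stage, I would start from the complex parallelizable structure $J$ on $\frg_8$, which, up to isomorphism, admits a $(1,0)$-basis $\{\omega^k\}_{k=1}^3$ with $d\omega^1=0$, $d\omega^2=\omega^{13}$, $d\omega^3=0$ (so that $\omega^{123}$ is closed and the complex Lie algebra is $\mathrm{aff}(\C)\oplus\C$). Since $J$ is complex parallelizable, every invariant Hermitian metric $F$ is automatically balanced, so $F$ has the general form \eqref{2form}. The strategy is then to perform an upper-triangular change of $(1,0)$-basis that diagonalizes $F$, i.e. produces a new basis $\{\tau^k\}_{k=1}^3$ with $F=\tfrac{i}{2}(\tau^{1\bar1}+\tau^{2\bar2}+\tau^{3\bar3})$, as in the proofs of Propositions~\ref{solv-g1-balanced-real-basis} and~\ref{solv-g3-balanced-real-basis}. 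The price is that $d\tau^i$ picks up extra $(2,0)$ summands whose coefficients are complex numbers determined by the original metric parameters and the rescaling factors. Finally, setting $e^{2k-1}+i\,e^{2k}:=\tau^k$ and separating real and imaginary parts yields \eqref{solv-g8-cp}, where $Y,Z,T\in\R$ encode respectively the overall real rescaling and the real/imaginary parts of the coefficient that appears in $d\tau^2$ after diagonalization; the sign condition $Y>0$ is a normalization choice.

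For the second stage, I would compute a small number of entries of $\treJF$ and $\ureJF$ using \eqref{solv-g8-cp} and proceed exactly as in the solvable cases already treated. Experience with those proofs suggests that an equation like $\treJF(5,6)=0$ will force $\varepsilon(1-2\rho)=0$, while a diagonal equation of the form $\ureJF(1,3,1,3)=0$ will produce a factor $(1+2\varepsilon-2\rho)(1-2\varepsilon-2\rho)$. Combining these two immediately forces $(\varepsilon,\rho)=(0,\tfrac12)$, which is the Chern connection; any connection in the $(\re)$-plane other than $\nabla^c$ therefore fails the Hermitian--Yang--Mills condition.

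To finish, I would invoke the classical fact that on a complex parallelizable Hermitian manifold the Chern connection is flat: the $(1,0)$-vector fields dual to $\{\omega^k\}_{k=1}^3$ are $\nabla^c$-parallel, and hence $\Omega^{\nabla^c}\equiv 0$. In particular $\nabla^c$ is trivially an instanton (indeed a flat one), which together with the previous step establishes the ``if and only if'' statement. The main obstacle will be the bookkeeping for the adapted-basis construction, since the diagonalizing change is genuinely two-parameter (a complex coefficient), and one must verify that the resulting equations can be repackaged in the compact three-real-parameter form \eqref{solv-g8-cp}; once the adapted frame is in hand, the instanton computation is entirely parallel to the cases already handled and contains no new conceptual difficulty.
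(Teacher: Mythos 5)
Your two-stage plan is exactly the paper's: diagonalize the metric by a change of $(1,0)$-basis, pass to the real adapted frame, force $(\varepsilon,\rho)=(0,\tfrac12)$ from $\treJF(5,6)=0$ together with $\ureJF(1,3,1,3)=0$, and conclude by flatness of the Chern connection in the invariant holomorphic frame. The second stage is correct as stated (the paper gets $\treJF(5,6)=0\Leftrightarrow\varepsilon(1-2\rho)(2Y^2+Z^2+T^2)=0$ and $\ureJF(1,3,1,3)=0\Leftrightarrow((1-2\rho)^2-4\varepsilon^2)(2Y^2+Z^2+T^2)=0$, and $Y>0$ kills the second factor), and so is the flatness argument.

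The genuine problem is your starting point. The structure equations you assign to the complex parallelizable structure on $\frg_8$, namely $d\omega^1=0$, $d\omega^2=\omega^{13}$, $d\omega^3=0$, are those of the complex Heisenberg algebra, whose underlying real Lie algebra is the nilpotent $\frh_5$ — i.e.\ you have written down the Iwasawa manifold, which is Proposition~\ref{nilv-familia-2-14}, not $\frg_8$. (They are also not $\mathrm{aff}(\C)\oplus\C$: that algebra is not unimodular and does not occur here at all.) The correct equations for the Nakamura algebra are $d\omega^1=2i\,\omega^{13}$, $d\omega^2=-2i\,\omega^{23}$, $d\omega^3=0$. This is not cosmetic: with your equations $\omega^1$ is closed, so after diagonalization you would obtain $de^1=de^2=0$, which is incompatible with \eqref{solv-g8-cp} (where $de^1,de^2\neq 0$), and the curvature computation would be the one for $\frh_5$ rather than for $\frg_8$. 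Once the equations are corrected, the diagonalization does require the two-step change of basis the paper uses (first absorbing $u,v$, then $z$), and the coefficients come out as $Y=2r'/\Delta'$ and $Z+iT=4u/(s'\Delta')$; the rest of your argument then goes through as planned.
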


\begin{proof}
It is well-known that the complex parallelizable structure $J$ on $\frg_8$ is defined by the equations
\begin{equation}\label{jotas-solv-g8-cp}
d\omega^1= 2i\,\omega^{13},\ \
d\omega^2=-2i\,\omega^{23},\ \
d\omega^3=0,
\end{equation}
and any metric $F$ given by \eqref{2form} is balanced.
Let us consider the (1,0)-basis $\{\sigma^{k}\}_{k=1}^3$ defined by
$$
\sigma^{1}=\omega^{1}, \quad
\sigma^{2}= -\frac{iu}{s^2}\, \omega^{1}+\omega^{2} +\frac{i\bar{v}}{s^2}\,\omega^{3}, \quad
\sigma^{3}=\omega^{3}.
$$
Then, the complex structure equations \eqref{jotas-solv-g8-cp} express in
this basis as
\begin{equation}\label{jotas-solv-g8-cp-sigmas}
d\sigma^1 = 2i\,\sigma^{13},\quad
d\sigma^2 = \frac{4u}{s^2}\,\sigma^{13} -2i\,\sigma^{23},\quad
d\sigma^3 = 0,
\end{equation}
and the metric $F$ is written as
$$
F=\frac{i}{2}r'^2\sigma^{1\bar1}+\frac{i}{2}s'^2\sigma^{2\bar2}+\frac{i}{2}t'^2\sigma^{3\bar3}+\frac{z'}{2}\sigma^{1\bar3}-\frac{\bar{z'}}{2}\sigma^{3\bar1},
$$
where $r'^2=r^2-\frac{|u|^2}{s^2}$, $s'^2=s^2$, $t'^2=t^2-\frac{|v|^2}{s^2}$, and $z'=z+\frac{iuv}{s^2}$.

Let $\Delta'=\sqrt{r'^2t'^2-|z'|^2}$. Now, in terms of the new (1,0)-basis $\{\tau^{k}\}_{k=1}^3$ defined by
$$
\tau^{1}=r'\,\sigma^{1} + \frac{i \bar{z'}}{r'}\, \sigma^{3}, \quad
\tau^{2}=s'\, \sigma^{2}, \quad
\tau^{3}=\frac{\Delta'}{r'}\, \sigma^{3},
$$
the metric is $F=\frac{i}{2}(\tau^{1\bar1}+\tau^{2\bar2}+\tau^{3\bar3})$, and
the complex structure equations \eqref{jotas-solv-g8-cp-sigmas} express as
\begin{equation}\label{jotas-solv-g8-cp-taus}
\begin{cases}
\begin{array}{lcl}
d\tau^1 \zzz & = &\zzz \frac{2r'i}{\Delta'}\, \tau^{13},\\[4pt]
d\tau^2 \zzz & = &\zzz \frac{4u}{s'\Delta'}\, \tau^{13} -\frac{2r'i}{\Delta'}\, \tau^{23},\\[4pt]
d\tau^3 \zzz & = &\zzz 0.
\end{array}
\end{cases}
\end{equation}
Thus, taking the real basis $\{e^1,\ldots,e^6\}$ as in Lemma~\ref{lema-solv}~(c),
from \eqref{jotas-solv-g8-cp-taus} we arrive at \eqref{solv-g8-cp}
with $Y=\frac{2r'}{\Delta'}$ and $Z+iT=\frac{4u}{s'\Delta'}$. Notice that $Y\not=0$ and we can always suppose $Y>0$ (just by taking $-e^j$ for $j=5,6$).

\smallskip

We study next the instanton condition for the connections $\nreJF$.
Since $Y>0$, from the following two equations
\begin{equation*}
\begin{array}{lcl}
\ureJF(1,3,1,3)=0 & \Longleftrightarrow & ((1-2 \rho)^2 - 4 \varepsilon^2 )(2 Y^2+Z^2+T^2)=0,\\[6pt]
\treJF(5,6)=0 & \Longleftrightarrow &
\varepsilon\, (1-2 \rho) (2 Y^2+Z^2+T^2)=0,
\end{array}
\end{equation*}
it follows that $(\re)=(0,\frac12)$, i.e. we are reduced to study the Chern connection.
Now, by a direct calculation one can check that the instanton condition is satisfied for any metric, since all the curvature forms of the Chern connection vanish.

In conclusion, for any $J$-Hermitian metric $F$, the Chern connection is a flat instanton.
Note that
 $dF=2Y (e^{12}-e^{34})\wedge e^6 + (e^{13}+e^{24})\wedge(Te^5+Ze^6)
 - (e^{14}-e^{23})\wedge(Ze^5-Te^6)
 \not=0$.
\end{proof}

\begin{remark}\label{coefs-cp-g8}
The coefficients $Y,Z,T\in\mathbb{R}$ in the equations \eqref{solv-g8-cp} are related to the metric coefficients $r,s,t\in\mathbb{R}^*$ and $u,v,z\in \mathbb{C}$ in \eqref{2form}. Indeed, by the proof of Proposition~\ref{solv-g8-cp-balanced-real-basis} we have
$Y=Y(r,s,t,u,v,z)=\frac{2r'}{\Delta'}$, $Z=Z(r,s,t,u,v,z)=\frac{4u_1}{s'\Delta'}$ and $T=T(r,s,t,u,v,z)=\frac{4u_2}{s'\Delta'}$, where $r'^2=r^2-\frac{|u|^2}{s^2}$, $s'^2=s^2$, $t'^2=t^2-\frac{|v|^2}{s^2}$, $z'=z+\frac{iuv}{s^2}$, and $\Delta'=\sqrt{r'^2t'^2-|z'|^2}$.
\end{remark}

\begin{proposition}\label{solv-g8-A-balanced-real-basis}
Let $M$ be a solvmanifold with underlying Lie algebra isomorphic to $\frg_8$, endowed with an invariant $J$, which is not complex parallelizable, admitting balanced metrics.
Then, for every invariant balanced Hermitian structure $(J,F)$, there is
a basis $\{e^k\}_{k=1}^6$ of 1-forms on $M$
satisfying \eqref{adapted-basis} and the following equations:
\begin{equation}\label{solv-g8-A}
\begin{cases}
\begin{array}{lcl}
de^1 \zzz & = &\zzz -\frac{2a}{t} e^{15} - \frac{2}{t} e^{16}+\frac{2b}{t} e^{25},\\[4pt]
de^2 \zzz & = &\zzz -\frac{2b}{t} e^{15}-\frac{2a}{t} e^{25}-\frac{2}{t} e^{26},\\[4pt]
de^3 \zzz & = &\zzz -\frac{4(a\,u_1-b\,u_2)}{t \sqrt{1-|u|^2}}  e^{15} - \frac{4 u_1}{t \sqrt{1-|u|^2}} e^{16} + \frac{4(b\,u_1+a\,u_2)}{t \sqrt{1-|u|^2}}  e^{25} + \frac{4 u_2}{t \sqrt{1-|u|^2}} e^{26}\\[7pt]
\zzz &  &\zzz + \frac{2a}{t} e^{35} + \frac{2}{t} e^{36} - \frac{2b}{t} e^{45} ,\\[5pt]
 de^4 \zzz & = &\zzz -\frac{4(b\,u_1+a\,u_2)}{t \sqrt{1-|u|^2}}  e^{15} - \frac{4 u_2}{t \sqrt{1-|u|^2}} e^{16} - \frac{4(a\,u_1-b\,u_2)}{t \sqrt{1-|u|^2}}  e^{25} - \frac{4 u_1}{t \sqrt{1-|u|^2}} e^{26}\\[7pt]
\zzz &  &\zzz + \frac{2b}{t} e^{35} + \frac{2a}{t} e^{45} + \frac{2}{t} e^{46} ,\\[5pt]
de^5 \zzz & = &\zzz de^6=0,
\end{array}
\end{cases}
\end{equation}
where $(a,b)\in\mathbb{R}\times\mathbb{R}^*-\{(0,-1)\}$, and
$t\in \R^*$ and $u=u_1+i\,u_2\in \B=\{ u\in\C\mid |u|<1\}$.

\smallskip

Moreover, the connection $\nreJF$ is an instanton if and only if $(\varepsilon,\rho)=(0,\frac12)$ and $u=0$; that is, the (non-K\"ahler) balanced metric $F$ is ``diagonal'' and $\nreJF$ is the Chern connection (which is flat).
\end{proposition}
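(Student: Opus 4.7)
The plan parallels the argument of Proposition~\ref{solv-g2-alfa-balanced-real-basis}, built on Lemma~\ref{lema-solv} and the classification in \cite{FOU} of complex structures with closed $(3,0)$-form on $\frg_8$. First I would recall from that classification that every non-complex-parallelizable such $J$ is, up to isomorphism, of the form \eqref{jotas-solv-KL} with complex parameters $K=-a+(1-b)i$ and $L=-a-(1+b)i$ for some $(a,b)\in\R\times\R^*-\{(0,-1)\}$. The exclusion is precisely the condition $L\neq 0$ required by Lemma~\ref{lema-solv}, and the excluded pair $(0,-1)$ recovers the complex parallelizable structure already handled in Proposition~\ref{solv-g8-cp-balanced-real-basis}.

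With $K,L$ as above, Lemma~\ref{lema-solv} applies verbatim: part (a) forces $v=z=0$ in \eqref{2form} for every balanced $F$, and parts (b)--(c) produce a unitary $(1,0)$-basis $\{\tau^k\}$ in which $F$ and $J$ take the canonical shape \eqref{adapted-basis}, with $d\tau^1,d\tau^2,d\tau^3$ controlled by the coefficients $K/t,L/t$ and $2uK/(t\sqrt{1-|u|^2}),2uL/(t\sqrt{1-|u|^2})$. Substituting the explicit values of $K,L$ and splitting into real and imaginary parts via $e^{2k-1}+ie^{2k}=\tau^k$ then yields exactly the structure equations~\eqref{solv-g8-A}, proving the first assertion.

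For the instanton analysis I would invoke Lemma~\ref{condiciones-de-instanton} and concentrate on a short list of strategic entries. By analogy with Propositions~\ref{solv-g1-balanced-real-basis} and~\ref{solv-g2-alfa-balanced-real-basis}, the equations
$$
\treJF(5,6)=0\qquad\text{and}\qquad\ureJF(1,3,1,3)=0
$$
are expected to factor as a polynomial in $(\varepsilon,\rho)$ times a factor depending on $(a,b,u,t)$ which is strictly positive under the standing hypotheses, forcing $(\varepsilon,\rho)=(0,\tfrac12)$; in other words, we are reduced to the Chern connection. Once there, I would compute
$$
\Theta^{0,1/2}_{\!(\!J,F)}(1,2)=\frac{-8\,|u|^2}{t^2(1-|u|^2)}\cdot(\text{positive factor in }a,b),
$$
so the instanton condition forces $u=0$. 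A direct calculation at $u=0$ then shows that all curvature two-forms of $\nabla^c$ vanish identically, so $\nabla^c$ is a flat instanton, while the inspection of $dF$ in \eqref{solv-g8-A} with $u=0$ shows that $F$ is non-K\"ahler (the terms in $e^{36},e^{45}$ survive whenever $(a,b)\neq(0,-1)$).

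The main obstacle I anticipate is the algebraic bookkeeping: with four metric parameters $(a,b,u,t)$ together with $(\varepsilon,\rho)$, the expressions $\treJF(i,j)$ and $\ureJF(i,j,k,l)$ are bulkier than in the previous propositions, and one must choose the combinations whose $(a,b,u,t)$-dependent factor is manifestly non-zero in order to cleanly separate the reduction to the Chern connection from the subsequent forcing $u=0$. The additional parameter $b$ compared with \eqref{solv-g2-alfa} is the only genuinely new feature, and the same two-step logic (fix $(\varepsilon,\rho)=(0,\tfrac12)$, then eliminate $u$) should carry through.
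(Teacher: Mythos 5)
Your proposal is correct and follows essentially the same route as the paper: it reduces to the Chern connection via exactly the two conditions $\treJF(5,6)=0$ and $\ureJF(1,3,1,3)=0$ (which in the paper factor with the strictly positive quantity $1+a^2+b^2|u|^2$), then forces $u=0$ from $\Theta^{0,1/2}_{\!(\!J,F)}(1,2)$, whose $(a,b)$-factor is $a^2+(1+b)^2\neq 0$ precisely because $(a,b)\neq(0,-1)$. The only minor imprecision is in the non-K\"ahler check: in the paper $dF$ contains the term $\tfrac{4}{t}e^{126}$, which is nonzero for all admissible $(a,b)$, so no case analysis is needed there.
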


\begin{proof}
By \cite[Prop. 3.7 and Thm. 4.5]{FOU}, any complex structure $J$ with closed $(3,0)$-form on the Lie algebra $\frg_8$ admitting a balanced metric is given, up to isomorphism, by complex structure equations of the form \eqref{jotas-solv-KL} with $K=-A+i$ and $L=-A-i$,
where $A\in \mathbb{C}$ with $\Imag A\not=0$. The structure is complex parallelizable if and only if $A=-i$, so we suppose next that $A=a+b\,i$, with $b\not=0$ and $(a,b)\not=(0,-1)$.
Following Lemma~\ref{lema-solv}, we consider the basis of invariant 1-forms $\{e^k\}_{k=1}^6$ on $M$ defined in (c). Now, by a direct calculation from the complex equations given in Lemma~\ref{lema-solv}~(b) for $K=-a+(1-b)i$ and $L=-a-(1+b)i$ we arrive at \eqref{solv-g8-A}.

\smallskip

Now, the instanton conditions for the connection $\nreJF$
imply
\begin{equation*}
\begin{array}{lcl}
\ureJF(1,3,1,3)=0 & \Longleftrightarrow & ((1-2 \rho)^2 - 4 \varepsilon^2 )(1+a^2+b^2 |u|^2)=0,\\[6pt]
\treJF(5,6)=0 & \Longleftrightarrow &
\varepsilon\, (1-2 \rho) (1+a^2+b^2 |u|^2)=0.
\end{array}
\end{equation*}
Therefore, $(\re)=(0,\frac12)$ and we are reduced to study the Chern connection.
But a direct calculation shows that $\Theta^{0,\frac12}_{\!(\!J,F)}(1,2)=0$ if and only if $(a^2+(1+b)^2) |u|^2=0$, so $u=0$ (i.e. the balanced metric $F$ is diagonal when written as \eqref{2form-simplificada} in the basis $\{\omega^{k}\}_{k=1}^3$). Moreover, in this case all the curvature forms vanish identically.
Note that $dF=\frac{4 a}{t} e^{125} +\frac{4}{t} e^{126} - \frac{4 a}{t} e^{345} -\frac{4}{t} e^{346} \not=0$, i.e. the metric $F$ is not K\"ahler.
\end{proof}

\begin{remark}\label{conversely}
It is worthy to remark that the adapted equations given in   Sections~\ref{instantones-nil} and \ref{instantones-solv} not only provide a complete description of the spaces of invariant balanced Hermitian structures on their respective manifolds, they also give rise to a constructive converse result.
In other words, for any family of equations, one can apply the following constructive process, which we illustrate in the case of Proposition~\ref{solv-g1-balanced-real-basis}: choosing any $t\in \R^*$ and any $u=u_1+i\,u_2 \in \B=\{ u\in\C\mid |u|<1\}$, one can check that the equations \eqref{solv-g1} satisfy $d^2e^k=0$, $1\leq k\leq 6$, so they define a solvable Lie algebra which is
isomorphic to $\frg_1$; defining $J$ and $F$ by \eqref{adapted-basis} one has that  $J$ is a complex structure with non-zero closed $(3,0)$-form $\Psi=(e^1+i\,e^2)\wedge(e^3+i\,e^4)\wedge(e^5+i\,e^6)$,
and $F$ is a balanced $J$-Hermitian metric; finally, since
the corresponding simply-connected Lie group has a lattice \cite{FOU}, we get a compact solvmanifold endowed with the Hermitian structure $(J,F)$.
\end{remark}

\section{Solutions of the Hull-Strominger system and the heterotic equations of motion}\label{nuevas-soluciones}

\noindent In this section we find new explicit solutions of the Hull-Strominger system and the heterotic equations of motion on the compact solvmanifold underlying the
Nakamura manifold.
In the forthcoming paper \cite{OUV23} a general study of the system on solvmanifolds will be provided.

It follows from Sections~\ref{instantones-nil} and \ref{instantones-solv} that
the Chern connection $\nabla^c$ is an instanton in several cases,
although it is always flat. We have the following

\begin{corollary}\label{resumen-Chern}
Let $M=\Gamma\backslash G$ be a six-dimensional compact manifold defined as the quotient of a simply connected Lie group $G$ by a lattice $\Gamma$.
Suppose that $M$ possesses an invariant complex structure $J$ with non-zero closed $(3,0)$-form admitting balanced metrics~$F$.
Then,
the Chern connection $\nabla^c$ is an instanton in the following cases:

\medskip

\noindent \ \ $\bullet$ $\frg\cong\frh_5$, $\frg_8$ or $\mathfrak{so}(3,\!1)$, endowed with its complex parallelizable structure  $J$ (in this case any Hermitian metric is balanced);

\medskip

\noindent \ \ $\bullet$ $\frg\cong\frg_1$, $\frg_2^{\alpha} (\alpha\!>\!0)$ or $\frg_8$, $J$ is any complex structure and $F$ any ``diagonal'' Hermitian metric;

\medskip

\noindent \ \ $\bullet$ $\frg\cong\frg_2^0$, for any complex structure $J$ and any K\"ahler metric $F$.

\medskip

\noindent Moreover, in all the cases $\nabla^c$ is flat.
\end{corollary}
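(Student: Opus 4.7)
The plan is to proceed by case-analysis on the Lie algebra $\frg$, invoking the full classification of six-dimensional unimodular Lie algebras admitting balanced Hermitian structures with closed $(3,0)$-form (Corollary~\ref{quotient-balanced}) and then specializing to $\nabla^c = \nabla^{0,\frac12}$ the detailed analyses already carried out case by case in Sections~\ref{HS-homogeneous}, \ref{instantones-nil} and~\ref{instantones-solv}. The possibilities for $\frg$ are $\frh_1,\ldots,\frh_6$, $\frh^-_{19}$, $\frg_1$, $\frg_2^{\alpha}$ ($\alpha\geq 0$), $\frg_3$, $\frg_5$, $\frg_7$, $\frg_8$, and $\mathfrak{so}(3,\!1)$; so the bulk of the argument is bookkeeping, and there is no serious analytic obstacle beyond collecting the right references.

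First I would discard the Lie algebras for which no element in the $(\re)$-plane is an instanton (hence in particular $\nabla^c$ is not): by Propositions~\ref{nilv-familia-2-15} and~\ref{nilv-h19} this eliminates $\frh_2$, $\frh_4$, $\frh_6$ and the non-parallelizable $\frh_5$, together with $\frh^-_{19}$; by Propositions~\ref{solv-g3-balanced-real-basis} and~\ref{solv-g5-balanced-real-basis} this eliminates $\frg_3$ and $\frg_5$; and in the cases $\frh_3$ and $\frg_7$ Propositions~\ref{h3} and~\ref{g7} show that only the Bismut connection $\nabla^{\frac12,0}$ can be an instanton, so $\nabla^c$ is excluded there as well.

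Next I would treat the complex parallelizable situations, for which every invariant Hermitian metric $F$ is automatically balanced. Proposition~\ref{nilv-familia-2-14} handles $\frh_5$ with its parallelizable $J$ on the Iwasawa manifold and gives that $\nabla^c$ is an instanton which is moreover flat; Proposition~\ref{solv-g8-cp-balanced-real-basis} handles the Nakamura case $\frg_8$ and verifies that all curvature forms of $\nabla^c$ vanish identically; and Proposition~\ref{sl2C} handles $\mathfrak{so}(3,1)$, where $\nabla^c$ is precisely the (flat) connection singled out at $(\re)=(0,\frac12)$. This covers the first bullet of the statement.

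Finally I would close out the remaining cases by invoking the diagonal-metric specializations already obtained: Proposition~\ref{solv-g1-balanced-real-basis} for $\frg_1$, Proposition~\ref{solv-g2-alfa-balanced-real-basis} for $\frg_2^{\alpha}$ with $\alpha>0$, and Proposition~\ref{solv-g8-A-balanced-real-basis} for $\frg_8$ endowed with a non-parallelizable $J$. Each of these propositions shows that among all connections in the $(\re)$-plane the unique instanton is $\nabla^c$, that this happens exactly for the balanced metrics with $u=0$ (i.e.\ ``diagonal'' in the sense of the $(1,0)$-basis used there), and that for such metrics every curvature form of $\nabla^c$ vanishes, i.e.\ $\nabla^c$ is flat. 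The K\"ahler case on $\frg_2^0$ follows from Proposition~\ref{solv-g2-0-balanced-real-basis}, where $\nabla^c$ coincides with the Levi-Civita connection and the computation shows that all curvature forms vanish. Assembling these items yields the corollary; the only ``choice'' in the argument is the division of cases above, and the flatness in each positive case is read off directly from the explicit curvature computations in the cited propositions rather than requiring a new argument.
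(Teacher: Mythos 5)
Your proposal is correct and follows essentially the same route as the paper: the published proof simply cites Propositions~\ref{sl2C}, \ref{nilv-familia-2-14}, \ref{solv-g1-balanced-real-basis}, \ref{solv-g2-0-balanced-real-basis}, \ref{solv-g2-alfa-balanced-real-basis}, \ref{solv-g8-cp-balanced-real-basis} and~\ref{solv-g8-A-balanced-real-basis}, which are exactly the positive cases you assemble. Your additional elimination of the algebras where no $(\re)$-connection is an instanton is harmless but not needed, since the corollary only asserts the listed positive cases rather than an exhaustive characterization.
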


\begin{proof}
The result is a direct consequence of
Propositions~\ref{sl2C}, \ref{nilv-familia-2-14}, 
\ref{solv-g1-balanced-real-basis}, 
\ref{solv-g2-0-balanced-real-basis}, 
\ref{solv-g2-alfa-balanced-real-basis}, 
\ref{solv-g8-cp-balanced-real-basis} 
and~\ref{solv-g8-A-balanced-real-basis}. 
\end{proof}

Let $X$ be a compact complex manifold of complex dimension~$n$, endowed with a balanced Hermitian metric $F$. Then, $F^{n-1}$ defines a \emph{real} $(n-1,n-1)$ class in Bott-Chern cohomology.
We recall that the Bott-Chern cohomology groups \cite{bott-chern} of $X$ are defined by
$$
H^{p,q}_{BC}(X):=\frac{\ker d\colon \Omega^{p,q}(X,\C)\longrightarrow  \Omega^{p+q+1}(X,\C)}{{\rm im}\,d d^c \colon \Omega^{p-1,q-1}(X,\C)\longrightarrow  \Omega^{p,q}(X,\C)}  \;,
$$
and we will denote by $[F^{n-1}]$ the class defined by the balanced metric in $H^{n-1,n-1}_{BC}(X,\R)\subset H^{n-1,n-1}_{BC}(X)$.

Recall that $\frg_8$ is the Lie algebra of the solvmanifold underlying the Nakamura manifold. Next we consider it endowed with its abelian complex structure. In the following result we prove that for any given invariant balanced metric, there is another metric defining the same Bott-Chern (2,2)-class for which its associated Chern connection is flat and
the heterotic equations of motion are satisfied with respect to a non-flat instanton.

\begin{theorem}\label{new-solutions}
Let $X=(M,J_{\!Ab})$ be the Nakamura manifold endowed with its abelian complex structure $J_{\!Ab}$.
Let $F$ be any invariant balanced Hermitian metric on $X$ and $[F^2]\in H_{\rm BC}^{2,2}(X,\R)$  its corresponding Bott-Chern class. Then, there exists a balanced Hermitian metric $\tilde{F}$ on $X$ satisfying the following properties:
\begin{enumerate}
\item[{\rm (a)}] The metric $\tilde{F}$ is cohomologous to $F$, i.e. $[\tilde{F}^2]=[F^2]\in H_{\rm BC}^{2,2}(X,\R)$, and its associated Chern connection $\nabla_{(J_{\!Ab},\tilde{F})}^c$ is flat;
\item[{\rm (b)}] There is a connection $A$ compatible with $(J_{\!Ab},\tilde{F})$, which is a non-flat instanton and satisfies the heterotic equations of motion with respect to the Chern connection. 
\end{enumerate}
\end{theorem}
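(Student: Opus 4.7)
The abelian complex structure $J_{Ab}$ on $X$ corresponds to the choice $(a,b)=(0,1)$ in Proposition \ref{solv-g8-A-balanced-real-basis}, so every invariant balanced Hermitian metric $F$ admits adapted orthonormal coordinates $\{e^k\}$ satisfying \eqref{adapted-basis} and the structure equations \eqref{solv-g8-A}, with parameters $t\in\R^*$ and $u\in\B$. In the underlying $(1,0)$-basis $\{\omega^k\}$ of Lemma \ref{lema-solv}, the abelian equations read $d\omega^1=-2i\,\omega^{1\bar 3}$, $d\omega^2=2i\,\omega^{2\bar 3}$, $d\omega^3=0$, and Proposition \ref{solv-g8-A-balanced-real-basis} tells us that $\nabla^c_{(J_{Ab},F)}$ is flat precisely when $F$ is diagonal ($u=0$). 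This immediately suggests a two-step strategy: first reduce to a diagonal representative of $[F^2]$ to obtain flatness of the Chern connection (part (a)); then construct an auxiliary non-flat Hermitian-Yang-Mills connection $A$ absorbing the remaining torsion term in the anomaly cancellation (part (b)).

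For part (a), I would expand $F^2$ explicitly in the invariant $(1,0)$-basis and split it into a diagonal piece (depending on $r^2,s^2,t^2$) and an off-diagonal piece (depending on $u=u_1+iu_2$). Using the structure equations above, I would exhibit invariant $(1,1)$-forms $\eta$ such that $dd^c\eta$ realizes the off-diagonal $(2,2)$-contribution together with a correction to the diagonal terms; this produces a diagonal invariant balanced metric $\tilde F$ with appropriate positive parameters $(\tilde r,\tilde s,\tilde t)$ satisfying $\tilde F^2=F^2+dd^c\eta$. By Proposition \ref{solv-g8-A-balanced-real-basis} applied to $\tilde F$, the Chern connection $\nabla^c_{(J_{Ab},\tilde F)}$ is then automatically flat. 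The identification of the class at the invariant level with $[F^2]\in H^{2,2}_{\rm BC}(X,\R)$ follows from the symmetrization argument that identifies invariant Bott-Chern cohomology with the full Bott-Chern cohomology for the Nakamura solvmanifold equipped with its abelian complex structure.

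For part (b), the flatness of $\nabla^c_{(J_{Ab},\tilde F)}$ obtained in (a) collapses the anomaly cancellation equation in \eqref{HS} to $dT=-\tfrac{\alpha'}{4}\,{\rm tr}\,\Omega^A\wedge\Omega^A$, with $T=J_{Ab}\,d\tilde F$. A direct computation of $T$ and $dT$ in the diagonal basis yields an explicit invariant real $(2,2)$-form that is nonzero whenever $\tilde r\neq \tilde s$. I would then take $A$ to be a connection on an invariant unitary bundle over $X$ whose curvature $\Omega^A$ is an invariant $J_{Ab}$-form of type $(1,1)$, so that the condition $(\Omega^A)^{0,2}=(\Omega^A)^{2,0}=0$ is automatic, and impose the primitivity $\Omega^A\wedge\tilde F^2=0$ in order to satisfy the HYM condition \eqref{HYM}. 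Matching the quadratic equation $-\tfrac{\alpha'}{4}\,{\rm tr}\,\Omega^A\wedge\Omega^A=dT$ is then a finite-dimensional algebraic problem in the coefficients of $\Omega^A$, the three positive parameters of $\tilde F$ and the real scalar $\alpha'$; one exhibits an explicit solution and verifies that $\Omega^A\not\equiv 0$ so that $A$ is genuinely non-flat.

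The main obstacle is the combined solvability of the HYM primitivity constraint together with the quadratic anomaly equation in part (b): although the parameter count is favorable, one must provide an explicit instanton that also respects the Bott-Chern constraint $[\tilde F^2]=[F^2]$ fixed in part (a). The residual freedom in the diagonal parameters of $\tilde F$ within its Bott-Chern class (the one-parameter ambiguity left after the reduction) together with the sign and magnitude of $\alpha'$ should suffice to solve the system, and the verification ultimately reduces to a careful algebraic check using the explicit expressions of $dT$ and $\Omega^A\wedge \Omega^A$ in the invariant $(1,0)$-basis.
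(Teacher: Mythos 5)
Your overall strategy coincides with the paper's: in part (a) you write the off-diagonal contribution of $F^2$ as an $i\partial\db$-exact invariant form and reduce to a diagonal metric $\tilde F$, whose Chern connection is flat by Proposition~\ref{solv-g8-A-balanced-real-basis}; in part (b) you seek an invariant Hermitian--Yang--Mills connection $A$ whose trace form matches $dT$. (One small simplification: once you have $\tilde F^2=F^2+dd^c\eta$ at the level of forms, the equality $[\tilde F^2]=[F^2]$ in $H^{2,2}_{\rm BC}(X,\R)$ is immediate, so the appeal to symmetrization and to the comparison between invariant and full Bott--Chern cohomology is not needed.)

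There are, however, two genuine problems in your part (b). First, your claim that $dT$ ``is nonzero whenever $\tilde r\neq\tilde s$'' is false and, taken at face value, would derail the argument: the reduction in part (a) produces precisely $\tilde r=\tilde s=(1-|u|^2)^{1/4}$, yet $d\tilde F=\frac{4}{t}(e^{12}-e^{34})\wedge e^6\neq0$ and $dT=-\frac{16}{t^2}(e^{1256}+e^{3456})\neq0$ for every diagonal balanced metric on $(\frg_8,J_{\!Ab})$; there is no Kähler degeneration here, so the right-hand side of the anomaly equation must always be matched by a genuinely non-flat $A$. Second, the existence of that $A$ is exactly the content of the theorem and cannot be deferred to ``a careful algebraic check'': the quadratic equation ${\rm tr}\,\Omega^A\wedge\Omega^A=-\frac{4}{\alpha'}dT$ has a sign obstruction, since for the natural invariant ansatz (the paper takes $(\sigma^A)^1_2=\lambda e^1+\mu e^2$, $(\sigma^A)^3_4=\lambda e^3+\mu e^4$, all other forms zero) one gets ${\rm tr}\,\Omega^A\wedge\Omega^A=-\frac{8}{t^2}(\lambda^2+\mu^2)(e^{1256}+e^{3456})$, and matching with $dT$ forces $\lambda^2+\mu^2=8/(-\alpha')$, solvable only for $\alpha'<0$. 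Your suggestion that the ``residual freedom in the diagonal parameters of $\tilde F$'' can be used to resolve this is also misplaced: $\tilde F$ is completely determined by $F$ in part (a), and the solvability rests entirely on the parameters of $A$ and on the sign of $\alpha'$. You need to exhibit the explicit instanton, verify the HYM conditions \eqref{Theta-Upsilon}, and confront the sign issue to complete the proof.
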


\begin{proof}
The abelian complex structure $J_{\!Ab}$ is given by the complex equations
\begin{equation}\label{ecus-g8-abeliana}
d\omega^1= -2i\,\omega^{1\bar{3}},\quad \
d\omega^2= 2i\,\omega^{2\bar{3}},\quad \
d\omega^3=0,
\end{equation}
so it corresponds to taking $K=0$ and $L=-2i$ in \eqref{jotas-solv-KL}.
By Lemma~\ref{lema-solv}~(a),
any invariant balanced Hermitian metric $F$ is given by \eqref{2form} with $v=z=0$.
In addition, we can normalize the metric coefficients $r=s=1$, so for the structure $J_{\!Ab}$ there is a (1,0)-basis $\{\omega^k\}_{k=1}^3$ satisfying \eqref{ecus-g8-abeliana} and such that any balanced metric $F$ is given by
\begin{equation}
2\,F=i\,(\omega^{1\bar1}+\omega^{2\bar2}+t^2\omega^{3\bar3})+u\,\omega^{1\bar2}-\bar{u}\,\omega^{2\bar1},
\end{equation}
with $t\in \R^*$ and $u\in \B=\{ u\in\C\mid |u|<1\}$. Then, we get
$$
2\,F^2=(1-|u|^2)\,\omega^{12\bar{1}\bar{2}} + t^2\omega^{13\bar{1}\bar{3}} + t^2\omega^{23\bar{2}\bar{3}} - iut^2\omega^{13\bar{2}\bar{3}} +
i \bar{u}t^2\omega^{23\bar{1}\bar{3}}.
$$
Notice that the equations \eqref{ecus-g8-abeliana} imply
$$
\partial\db (\omega^{1\bar{2}}) = 4\,\omega^{13\bar{2}\bar{3}},
\quad\quad
\partial\db (\omega^{2\bar{1}}) = 4\,\omega^{23\bar{1}\bar{3}},
$$
so we have
$$
F^2=\frac{1-|u|^2}{2}\,\omega^{12\bar{1}\bar{2}} + \frac{t^2}{2}\,\omega^{13\bar{1}\bar{3}} + \frac{t^2}{2}\,\omega^{23\bar{2}\bar{3}} + i\,\partial\db \left( \frac{ \bar{u}t^2}{8}\,\omega^{2\bar{1}} - \frac{ut^2}{8}\, \omega^{1\bar{2}} \right).
$$
Hence, $[F^2]=[\tilde{F}^2]$ in $H_{\rm BC}^{2,2}(X,\R)$, where
$\tilde{F}$ is the balanced Hermitian metric on $X$ defined by
\begin{equation}\label{Ftilde}
2\,\tilde{F}=i\,(\tilde{r}^2\,\omega^{1\bar1}+\tilde{s}^2\,\omega^{2\bar2}+\tilde{t}^{\,2}\,\omega^{3\bar3}),
\end{equation}
where $\tilde{r}=\tilde{s}=(1-|u|^2)^{1/4}$ and $\tilde{t}=t/(1-|u|^2)^{1/4}$.
Now, since the metric $\tilde{F}$ is diagonal, it follows from Corollary~\ref{resumen-Chern} that its Chern connection is flat. This proves (a).

\smallskip

To prove (b), we need to find an instanton $A$ solving the anomaly cancellation condition for the Chern connection, i.e. satisfying
\begin{equation}\label{anomaly-g8-abel}
dT=\frac{\alpha'}{4} ({\rm tr}\,\Omega^{c}\!\wedge\Omega^{c}-{\rm tr}\,\Omega^{A}\!\wedge\Omega^{A})=-\frac{\alpha'}{4} {\rm tr}\,\Omega^{A}\!\wedge\Omega^{A},
\end{equation}
where $\alpha'$ is a non-zero constant and $dT=dJ_{\!Ab}d\tilde{F}$, with $\tilde{F}$ given in \eqref{Ftilde}.
We first notice that again, because of the form of the complex equations \eqref{ecus-g8-abeliana}, there is an automorphism of $(\frg_8,J_{\!Ab})$ so that
we can suppose that $\tilde{r}=\tilde{s}=1$. From now on, we will denote the 
metric coefficient $\tilde{t}$ by $t$.

Hence,
we can write the balanced metric as $\tilde{F}=e^{12}+e^{34}+e^{56}$ with respect to an adapted basis $\{e^k\}_{k=1}^6$ whose differentials are given by~\eqref{solv-g8-A}
with $(a,b)=(0,1)$ and $u_1=u_2=0$. A direct calculation gives
$d\tilde{F}= \frac{4}{t} (e^{12} -  e^{34}) \wedge e^6$,
so we have the torsion 3-form $T=J_{\!Ab}d\tilde{F}=\frac{4}{t} (e^{12} -  e^{34}) \wedge e^5$. Thus,
\begin{equation}\label{dT-g8-abel}
dT= - \frac{16}{t^2} e^{1256} - \frac{16}{t^2} e^{3456}.
\end{equation}

Now, we consider a linear connection $A$ defined by the
connection 1-forms given, in the adapted basis $\{e^k\}_{k=1}^6$, by
$$
(\sigma^{A})^1_2=-(\sigma^{A})^2_1= \lambda\, e^1 + \mu\, e^2, \quad (\sigma^{A})^3_4=-(\sigma^{A})^4_3= \lambda\, e^3 + \mu\, e^4,
$$
and $(\sigma^{A})^i_j =0$ for any $(i,j)\not=(1,2),(2,1),(3,4),(4,3)$.
Here $\lambda,\mu$ are real numbers, so for any such pair one has a connection $A_{\lambda,\mu}$ (which we will denote simply by $A$).

Notice that any connection $A$ is Hermitian,
in fact, since we are working in an adapted basis, the compatibility of $A$ with the U(3)-structure $(J_{\!Ab},\tilde{F})$, i.e. $AJ_{\!Ab}=0=A\tilde{F}$,
is equivalent to the connection 1-forms to satisfy $(\sigma^{A})^j_i = -(\sigma^{A})^i_j$,
together with the conditions
$$
\begin{array}{l}
(\sigma^{A})^1_3 = (\sigma^{A})^2_4, \quad (\sigma^{A})^1_4 = -(\sigma^{A})^2_3, \quad
(\sigma^{A})^1_5 = (\sigma^{A})^2_6, \quad (\sigma^{A})^1_6 = -(\sigma^{A})^2_5,
\\[1em]
(\sigma^{A})^3_5 = (\sigma^{A})^4_6, \quad (\sigma^{A})^3_6 = -(\sigma^{A})^4_5.
\end{array}
$$
By a direct calculation we get that all the curvature 2-forms $(\Omega^{A})^i_j$ vanish, except for
$$
(\Omega^{A})^1_2= -(\Omega^{A})^2_1= - \frac{2 \mu}{t} e^{15} - \frac{2 \lambda}{t} e^{16} + \frac{2 \lambda}{t} e^{25} - \frac{2 \mu}{t} e^{26},
$$
and
$$
(\Omega^{A})^3_4= -(\Omega^{A})^4_3= \frac{2 \mu}{t} e^{35} + \frac{2 \lambda}{t} e^{36} - \frac{2 \lambda}{t} e^{45} + \frac{2 \mu}{t} e^{46}.
$$
Therefore, the conditions \eqref{Theta-Upsilon} in Lemma~\ref{condiciones-de-instanton} are fulfilled, which implies that $A$ is an instanton.
Moreover, from these curvature forms one obtains that the trace of the curvature of the instanton is
$$
{\rm tr}\,\Omega^{A}\!\wedge\Omega^{A}= - \frac{8}{t^2}(\lambda^2+\mu^2) e^{1256} - \frac{8}{t^2}(\lambda^2+\mu^2) e^{3456},
$$
so, taking into account \eqref{dT-g8-abel}, we have that the anomaly cancellation \eqref{anomaly-g8-abel} is satisfied if and only if
$$
- \frac{16}{t^2}= \frac{2\alpha'}{t^2}(\lambda^2+\mu^2).
$$
Finally, given any negative $\alpha'$, we can choose $\lambda,\mu$ such that
$\lambda^2+\mu^2=8/(-\alpha')$. Hence, with this choice, we have a solution of the heterotic equations of motion with non-flat instanton.
\end{proof}

\begin{remark}\label{het-ecus-A}
We note here that the Nakamura manifold endowed with its abelian complex structure $J_{\!Ab}$
also provides solutions to the heterotic equations of motion for any given positive $\alpha'$.
Indeed, we can exchange the roles of $\nabla^c$ and $A$ in the anomaly cancellation condition and look for solutions of the equation
$$
dT=\frac{\alpha'}{4} ({\rm tr}\,\Omega^{A}\!\wedge\Omega^{A}-{\rm tr}\,\Omega^{c}\!\wedge\Omega^{c})=\frac{\alpha'}{4} {\rm tr}\,\Omega^{A}\!\wedge\Omega^{A},
$$
with $\alpha'>0$. Notice that this lies precisely in the setting of the system proposed by M. Garcia-Fernandez in \cite{MGarcia-Crelle}, where metric connections which are instantons are allowed in the first term of the right hand side of the anomaly cancellation equation of the Hull-Strominger system. Recall that the instantons $A_{\lambda,\mu}$ found in Theorem~\ref{new-solutions} are not only metric, they are also Hermitian.
So, following the proof of the theorem, given any positive $\alpha'$, we can choose $\lambda,\mu$ such that $\lambda^2+\mu^2=8/\alpha'$. With this choice, one has many solutions of the heterotic equations of motion, according to \cite{MGarcia-Crelle}, with flat instanton.
\end{remark}

\vskip.2cm

\appendix

\section{}

\noindent In this appendix we
provide the relevant data for the proofs of the Propositions~\ref{prop_sl2R} and~\ref{resto_Lie_algebras}.
Tables~\ref{tabla_sl2R} and~\ref{tabla_su2} correspond to the $3\oplus 3$ decomposable Lie algebras $\mathfrak{sl}(2,\R)\oplus\mathfrak{g}_2$ and $\mathfrak{so}(3)\oplus\mathfrak{g}_2$, respectively. For the Lie algebras $L_{6,1}$ and $L_{6,4}$ the relevant data are given in Table~\ref{tabla_L61-L64}.


{\small\begin{table}[!hbt]
\centering
\renewcommand{\arraystretch}{1.4}
\begin{tabular}{>{$}l<{$}|>{$}l<{$}>{$}l<{$}}
\frg_2&\multicolumn{2}{c}{Lie algebra $\frg=\mathfrak{sl}(2,\R)\oplus\mathfrak{g}_2$  
}\\
\hline
\multirow{3}{*}{$\R^3$}&\multicolumn{2}{l}{$\rho= a_1 e^{123}+a_2 e^{124}+a_3 e^{125}+a_4 e^{126}+a_5 e^{134}+a_6 e^{135}+a_7 e^{136}+a_8 e^{234}+a_9e^{235}$}\\
&\quad\ \  +a_{10} e^{236}+a_{11} e^{456}&\\[1pt]
&\lambda=\mu=\tau=a_1a_{11},\quad A=B=C=0&\\
\hline
\multirow{4}{*}{$\frh_3$}&\multicolumn{2}{l}{$\rho=a_1 e^{123}+a_2 e^{124}+a_3 e^{125}+a_4 e^{134}+a_5 e^{135}+a_6 e^{234}+a_7 e^{235}+a_8 (e^{236}-e^{145})$} \\
&\quad\ \  +a_9 (e^{136}+e^{245})+a_{10} (e^{126}+e^{345})+a_{11} e^{456}&\\[1pt]
&\lambda=a_{10}^2 - a_1 a_{11} - a_8^2 - a_9^2,\quad \mu=a_{10}^2 - a_1 a_{11} + a_8^2 + a_9^2,\quad \tau=-a_{10}^2 - a_1 a_{11} + a_8^2 - a_9^2&\\
&A=-2 a_8 a_9,\quad B=-2 a_{10} a_8,\quad C=2 a_{10} a_9&\\
\hline
\multirow{6}{*}{$\fre(1,1)$}&\multicolumn{2}{l}{$\rho=a_1 e^{123}+a_2 e^{124}+a_3 e^{134}+a_4 e^{234}+a_5 (e^{146}+e^{235})+a_6 (e^{145}+e^{236})+a_7 (e^{245}-e^{136})$}\\
&\multicolumn{2}{l}{\quad\ \  $+a_8 (e^{246}-e^{135})+a_9 (e^{345}-e^{126})+a_{10} (e^{346}-e^{125})+a_{11} e^{456}$}\\[1pt]
&\lambda=a_{10}^2 - a_1 a_{11} - a_5^2 + a_6^2 + a_7^2 - a_8^2 - a_9^2,\quad \mu=a_{10}^2 - a_1 a_{11} + a_5^2 - a_6^2 - a_7^2 + a_8^2 - a_9^2 &\\
&\tau=-a_{10}^2 - a_1 a_{11} + a_5^2 - a_6^2 + a_7^2 - a_8^2 + a_9^2&\\
&A=2 (a_5 a_8- a_6 a_7),\quad B=2 (a_{10} a_5 - a_6 a_9),\quad
C=2 (a_{10} a_8 -  a_7 a_9)&\\
\hline
\multirow{6}{*}{$\fre(2)$}&\multicolumn{2}{l}{$\rho=a_1 e^{123}+a_2 e^{124}+a_3 e^{134}+a_4 e^{234}+a_5 (e^{146}+e^{235})+a_6 (e^{236}-e^{145})+a_7 (e^{136}+e^{245})$}\\
&\multicolumn{2}{l}{\quad\ \  $+a_8 (e^{246}-e^{135})+a_9 (e^{126}+e^{345})+a_{10} (e^{346}-e^{125})+a_{11} e^{456}$}\\[1pt]
&\lambda=a_{10}^2 - a_1 a_{11} - a_5^2 - a_6^2 - a_7^2 - a_8^2 + a_9^2,\quad \mu=a_{10}^2 - a_1 a_{11} + a_5^2 + a_6^2 + a_7^2 + a_8^2 + a_9^2 &\\
&\tau=-a_{10}^2 - a_1 a_{11} + a_5^2 + a_6^2 - a_7^2 - a_8^2 - a_9^2&\\
&A=2 (a_5 a_8 - a_6 a_7),\quad B=2 (a_{10} a_5 - a_6 a_9),\quad C=2 (a_{10} a_8 + a_7 a_9)&\\
\hline
\multirow{6}{*}{$\mathfrak{sl}(2,\R)$}&\multicolumn{2}{l}{$\rho=a_1 e^{123}+a_2 (e^{234}\!-\!e^{156})+a_3 (e^{146}+e^{235})+a_4 (e^{145}+e^{236})+a_5 (e^{245}\!-\!e^{136})+a_6 (e^{246}\!-\!e^{135})$}\\
&\multicolumn{2}{l}{\quad\ \  $+a_7 (e^{134}+e^{256})+a_8 (e^{345}-e^{126})+a_9 (e^{346}-e^{125})+a_{10} (e^{124}+e^{356})+a_{11} e^{456}$}\\[1pt]
&\lambda=-a_{10}^2 - a_1 a_{11} + a_2^2 - a_3^2 + a_4^2 + a_5^2 - a_6^2 + a_7^2 - a_8^2 +
a_9^2&\\
&\mu=-a_{10}^2 - a_1 a_{11} - a_2^2 + a_3^2 - a_4^2 - a_5^2 + a_6^2 - a_7^2 - a_8^2 +
a_9^2 &\\
&\tau=a_{10}^2 - a_1 a_{11} - a_2^2 + a_3^2 - a_4^2 + a_5^2 - a_6^2 + a_7^2 + a_8^2 - a_9^2&\\
&A=-2 (a_4 a_5 - a_3 a_6 + a_2 a_7),\quad B=-2 (a_{10} a_2 + a_4 a_8 - a_3 a_9),\quad C=-2 (a_{10} a_7 + a_5 a_8 - a_6 a_9)&\\
\hline
\multirow{6}{*}{$\mathfrak{so}(3)$}&\multicolumn{2}{l}{$\rho=a_1 e^{123}+a_2 (e^{234}\!-\!e^{156})+a_3 (e^{146}+e^{235})+a_4 (e^{236}\!-\!e^{145})+a_5 (e^{136}+e^{245})+a_6 (e^{246}\!-\!e^{135})$}\\
&\multicolumn{2}{l}{\quad\ \  $+a_7 (e^{134}+e^{256})+a_8 (e^{126}+e^{345})+a_9 (e^{346}-e^{125})+a_{10} (e^{124}+e^{356})+a_{11} e^{456}$}\\[1pt]
&\lambda=a_{10}^2 - a_1 a_{11} - a_2^2 - a_3^2 - a_4^2 - a_5^2 - a_6^2 - a_7^2 + a_8^2 + a_9^2 &\\
&\mu=a_{10}^2 - a_1 a_{11} + a_2^2 + a_3^2 + a_4^2 + a_5^2 + a_6^2 + a_7^2 + a_8^2 + a_9^2&\\
&\tau=-a_{10}^2 - a_1 a_{11} + a_2^2 + a_3^2 + a_4^2 - a_5^2 - a_6^2 - a_7^2 - a_8^2 - a_9^2&\\
&A=-2 (a_4 a_5 - a_3 a_6 + a_2 a_7),\quad B=-2 (a_{10} a_2 + a_4 a_8 - a_3 a_9),\quad C=2 (a_{10} a_7 + a_5 a_8 + a_6 a_9)&
\end{tabular}

\vskip.5cm

\caption{Closed 3-forms $\rho$ on $\frg=\mathfrak{sl}(2,\R)\oplus\mathfrak{g}_2$ and the values $\lambda,\mu,\tau,A,B,C$ (up to the constant $|\tilde \lambda(\rho)|^{-1/2}$) of the linear endomorphism $F$ of $\mathfrak{sl}(2,\R)$ in Lemma~\ref{lema_sl2R}~(i) induced by $\tilde J_\rho$.}\label{tabla_sl2R}

\end{table}}


\newpage


{\small\begin{table}[!hbt]
\centering
\renewcommand{\arraystretch}{1.4}
\begin{tabular}{>{$}l<{$}|>{$}l<{$}>{$}l<{$}}
\frg_2&\multicolumn{2}{c}{Lie algebra $\frg=\mathfrak{so}(3)\oplus\mathfrak{g}_2$
}\\
\hline
\multirow{3}{*}{$\R^3$}&\multicolumn{2}{l}{$\rho=a_1 e^{123}+a_2 e^{124}+a_3 e^{125}+a_4 e^{126}+a_5 e^{134}+a_6 e^{135}+a_7 e^{136}+a_8 e^{234}+a_9 e^{235}$}\\
&\quad\ \  +a_{10} e^{236}+a_{11} e^{456}\\[1pt]
&\lambda=\mu=\tau=-a_1a_{11},\quad A=B=C=0&\\
\hline
\multirow{4}{*}{$\frh_3$}&\multicolumn{2}{l}{$\rho=a_1 e^{123} + a_2 e^{124}+a_3 e^{125}+a_4 e^{126}+a_5 e^{134}+a_6 e^{135}+a_7 e^{136}+a_8 e^{234}+a_9 e^{235}$}\\
&\quad\ \  +a_{10} e^{236}+a_{11} e^{456}\\[1pt]
&\lambda=-a_{10}^2 - a_1 a_{11} + a_8^2 - a_9^2,\quad \mu=-a_{10}^2 - a_1 a_{11} - a_8^2 + a_9^2,\quad \tau=a_{10}^2 - a_1 a_{11} - a_8^2 - a_9^2\\
&A=-2 a_8 a_9,\quad B=-2 a_{10} a_8,\quad C=2 a_{10} a_9&\\
\hline
\multirow{6}{*}{$\fre(1,1)$}&\multicolumn{2}{l}{$\rho=a_1 e^{123}+a_2 e^{124}+a_3 e^{134}+a_4 e^{234}+a_5 (e^{146}+e^{235})+a_6 (e^{145}+e^{236})+a_7 (e^{245}-e^{136}) $}\\
&\multicolumn{2}{l}{\quad\ \  $+a_8 (e^{246}-e^{135})+a_9 (e^{126}+e^{345})+a_{10} (e^{125}+e^{346})+a_{11} e^{456}$}\\[1pt]
&\lambda=-a_{10}^2 - a_1 a_{11} + a_5^2 - a_6^2 + a_7^2 - a_8^2 + a_9^2,\quad \mu=-a_{10}^2 - a_1 a_{11} - a_5^2 + a_6^2 - a_7^2 + a_8^2 + a_9^2&\\
&\tau=a_{10}^2 - a_1 a_{11} - a_5^2 + a_6^2 + a_7^2 - a_8^2 - a_9^2&\\
&A=2 (a_5 a_8 - a_6 a_7),\quad B=2 (a_{10} a_5 - a_6 a_9),\quad
C=2 (a_{10} a_8 - a_7 a_9)&\\
\hline
\multirow{6}{*}{$\fre(2)$}&\multicolumn{2}{l}{$\rho=a_1 e^{123}+a_2 e^{124}+a_3 e^{134}+a_4 e^{234}+a_5 (e^{146}+e^{235})+a_6 (e^{236}-e^{145})+a_7 (e^{136}+e^{245})$}\\
&\multicolumn{2}{l}{\quad\ \  $+a_8 (e^{246}-e^{135})+a_9 (e^{345}-e^{126})+a_{10} (e^{125}+e^{346})+a_{11} e^{456}$}\\[1pt]
&\lambda=-a_{10}^2 - a_1 a_{11} + a_5^2 + a_6^2 - a_7^2 - a_8^2 - a_9^2,\quad \mu=-a_{10}^2 - a_1 a_{11} - a_5^2 - a_6^2 + a_7^2 + a_8^2 - a_9^2&\\
&\tau=a_{10}^2 - a_1 a_{11} - a_5^2 - a_6^2 - a_7^2 - a_8^2 + a_9^2&\\
&A=2 (a_5 a_8 - a_6 a_7),\quad B=2 (a_{10} a_5 - a_6 a_9),\quad C=2 (a_{10} a_8 + a_7 a_9)&\\
\hline
\multirow{6}{*}{$\mathfrak{so}(3)$}&\multicolumn{2}{l}{$\rho=a_1 e^{123}+a_2 (e^{234}-e^{156})+a_3 (e^{146}+e^{235})+a_4 (e^{236}-e^{145})+a_5 (e^{136}+e^{245})+a_6 (e^{246}-e^{135})$}\\
&\multicolumn{2}{l}{\quad\ \  $+a_7 (e^{134}+e^{256})+a_8 (e^{126}+e^{345})+a_9 (e^{346}-e^{125})+a_{10} (e^{124}+e^{356})+a_{11} e^{456}$}\\[1pt]
&\lambda=-a_{10}^2 - a_1 a_{11} + a_2^2 + a_3^2 + a_4^2 - a_5^2 - a_6^2 - a_7^2 - a_8^2 -
a_9^2&\\
&\mu=-a_{10}^2 - a_1 a_{11} - a_2^2 - a_3^2 - a_4^2 + a_5^2 + a_6^2 + a_7^2 - a_8^2 -
a_9^2&\\
&\tau=a_{10}^2 - a_1 a_{11} - a_2^2 - a_3^2 - a_4^2 - a_5^2 - a_6^2 - a_7^2 + a_8^2 + a_9^2&\\
&A=-2 (a_4 a_5 - a_3 a_6 + a_2 a_7),\quad B=-2 (a_{10} a_2 + a_4 a_8 - a_3 a_9),\quad
C=2 (a_{10} a_7 + a_5 a_8 + a_6 a_9)&
\end{tabular}

\vskip.5cm

\caption{Closed 3-forms $\rho$ on $\frg=\mathfrak{so}(3)\oplus\mathfrak{g}_2$ and the values $\lambda,\mu,\tau,A,B,C$ (up to the constant $|\tilde \lambda(\rho)|^{-1/2}$) of the linear endomorphism $F$ of $\mathfrak{so}(3)$ in Lemma~\ref{lema_sl2R}~(ii) induced by $\tilde J_\rho$.}\label{tabla_su2}

\end{table}
}


\newpage

\begin{table}[!hbt]
  \centering

  \renewcommand{\arraystretch}{1.4}
  \begin{tabular}{>{$}c<{$}|>{$}l<{$}}
    \text{$\frg$}&\multicolumn{1}{c}{}\\
    \hline
    &\\[-15pt]
  \multirow{17}{*}{$L_{6,1}$}&\rho=a_1 e^{123} + a_2 e^{126} + a_3 e^{135} + a_4 (e^{136}-e^{125}) + a_5 e^{234} + a_6 (e^{235}-e^{134}) \\[-2pt]
      &\quad\ \ +\, a_7 (e^{124} + e^{236})  + a_8 (e^{146}+e^{256}) + a_9 (e^{245} + e^{346}) +a_{10}(e^{356}-e^{145})  + a_{11} e^{456},\\
      &d\big(\tilde J_\rho\rho\big)=q_1e^{1234}+q_2e^{1235}+q_3e^{1236}+q_4e^{1245}+q_5(e^{1246}+e^{1345})+q_6(e^{1256}+e^{2345})\\[-2pt]
      &\quad\quad\quad\quad\  +\,q_7e^{1346}+q_8(e^{1356}+e^{2346})+q_9e^{2356},\\
      &\tilde \lambda(\rho)=(a_1a_{11})^2+a_{10}q_2+a_3q_4+a_4q_5-a_6q_8+a_3q_9,\quad \textrm{where:}\\
      &q_1=4(a_{10}a_2a_6+a_{10}a_4a_7-a_4a_6a_8+a_3a_7a_8+a_2a_3a_9+a_4^2a_9),\\
      &q_2=-4(a_{10}a_2a_5-a_{10}a_7^2-a_4a_5a_8-a_6a_7a_8-a_2a_6a_9-a_4a_7a_9),\\
      &q_3=-4(a_{10}a_4a_5+a_{10}a_6a_7+a_3a_5a_8+a_6^2a_8-a_4a_6a_9+a_3a_7a_9),\\
      &q_4=2(a_{10}^2a_2+a_{11}a_2a_3+a_{11}a_4^2-a_{10}^2a_5-a_{11}a_2a_5-a_{11}a_3a_5-a_{11}a_6^2+a_{11}a_7^2\\
      &\quad\quad\quad -2a_{10}a_4a_8  -a_3a_8^2+a_5a_8^2+2a_{10}a_6a_9+2a_7a_8a_9+a_2a_9^2+a_3a_9^2),\\
      &q_5=-4(a_{11}a_4a_5+a_{11}a_6a_7-a_{10}a_5a_8-a_{10}a_7a_9+a_6a_8a_9-a_4a_9^2),\\
      &q_6=-4(a_{11}a_4a_6-a_{10}^2a_7-a_{11}a_3a_7-a_{10}a_6a_8-a_{10}a_4a_9-a_3a_8a_9),\\
      &q_7=2(a_{10}^2a_2+a_{11}a_2a_3+a_{11}a_4^2+a_{10}^2a_5+a_{11}a_2a_5+a_{11}a_3a_5+a_{11}a_6^2-a_{11}a_7^2\\
      &\quad\quad\quad -2a_{10}a_4a_8 -a_3a_8^2-a_5a_8^2-2a_{10}a_6a_9-2a_7a_8a_9-a_2a_9^2-a_3a_9^2),\\
      &q_8=4(a_{11}a_2a_6+a_{11}a_4a_7-a_{10}a_7a_8-a_6a_8^2-a_{10}a_2a_9+a_4a_8a_9),\\
      &q_9=-2(a_{10}^2a_2+a_{11}a_2a_3+a_{11}a_4^2-a_{10}^2a_5+a_{11}a_2a_5-a_{11}a_3a_5-a_{11}a_6^2-a_{11}a_7^2\\
      &\quad\quad\quad\quad -2a_{10}a_4a_8 -a_3a_8^2-a_5a_8^2+2a_{10}a_6a_9-2a_7a_8a_9-a_2a_9^2+a_3a_9^2).\\
\hline
&\\[-15pt]

\multirow{17}{*}{$L_{6,4}$}&\rho= a_1 e^{123} + a_2 e^{124} + a_3 (e^{126} + e^{134}) + a_4 e^{136} + a_5(e^{234}-2e^{125})  + a_6 e^{235}  \\[-2pt]
  &\quad\ \ +\, a_7 (e^{236}-2e^{135})  + a_8 (2e^{145}+e^{246}) + a_9 (e^{256} + e^{345}) + a_{10}(e^{346}-2e^{156})  + a_{11} e^{456},\\
  &d\big(\tilde J_\rho\rho\big)=q_1e^{1234}+q_2e^{1235}+q_3e^{1236}+q_4e^{1245}+q_5(e^{1246}+e^{2345})+q_6(e^{1256}+e^{1345})\\[-2pt]
  &\quad\quad\quad\quad\ +\,q_7(e^{1346}-e^{2356})+q_8e^{1356}+q_9e^{2346},\\
  &\tilde \lambda(\rho)=(a_1a_{11})^2+a_{10}q_1-\frac{a_4}{2}q_4-a_7q_5+\frac{a_3}{2}q_6+a_3q_9,\quad \textrm{where:}\\
  &q_1= 8(2a_{10}a_5^2+a_{10}a_2a_6-a_3a_6a_8-2a_5a_7a_8+a_3a_5a_9-a_2a_7a_9),\\
  &q_2= -8(2a_{10}a_3a_5-2a_{10}a_2a_7-2a_4a_5a_8+2a_3a_7a_8+a_3^2a_9-a_2a_4a_9),\\
  &q_3= -8(a_{10}a_3a_6+2a_{10}a_5a_7-a_4a_6a_8-2a_7^2a_8+a_4a_5a_9-a_3a_7a_9),\\
  &q_4= 8(2a_{11}a_5^2+a_{11}a_2a_6-2a_6a_8^2+4a_5a_8a_9+a_2a_9^2),\\
  &q_5= -8(a_{11}a_3a_5-a_{11}a_2a_7-2a_{10}a_5a_8+2a_7a_8^2-a_{10}a_2a_9+a_3a_8a_9),\\
  &q_6= 4(2a_{10}^2a_2+a_{11}a_3^2-a_{11}a_2a_4-4a_{10}a_3a_8+2a_4a_8^2),\\
  &q_7= -8(2a_{10}^2a_5-a_{11}a_4a_5+a_{11}a_3a_7-2a_{10}a_7a_8+a_{10}a_3a_9-a_4a_8a_9),\\
  &q_8= -8(2a_{10}^2a_6-a_{11}a_4a_6-2a_{11}a_7^2-4a_{10}a_7a_9-a_4a_9^2),\\
  &q_9= -2(2a_{10}^2a_2+a_{11}a_3^2-a_{11}a_2a_4-2a_{11}a_3a_6-4a_{11}a_5a_7-4a_{10}a_3a_8+4a_{10}a_6a_8\\[-2pt]
  &\quad\quad\quad\quad  +\,2a_4a_8^2-4a_{10}a_5a_9-4a_7a_8a_9-2a_3a_9^2).
  \end{tabular}

  \vskip.5cm

  \caption{Generic closed three-forms $\rho$ and the corresponding four-form $d(\tilde J_\rho\rho)$ and scalar $\tilde \lambda(\rho)$ on the Lie algebra $\frg=L_{6,1}, L_{6,4}$.}\label{tabla_L61-L64}

\end{table}

\begin{landscape}

\end{landscape}

\section*{Acknowledgments}
\noindent
This work has been partially supported by grant PID2020-115652GB-I00, funded by MCIN/AEI/10.13039/501100011033,
and by grant E22-17R ``Algebra y Geometr\'{\i}a'' (Gobierno de Arag\'on/FEDER).

\smallskip


\begin{thebibliography}{33}

\bibitem{A-Gar} B. Andreas, M. Garcia-Fernandez,
Solutions of the Strominger system via stable bundles on Calabi-Yau threefolds,
\emph{Comm. Math. Phys.} {\bf 315} (2012), 153--168.

\bibitem{bott-chern}
R. Bott, S.~S. Chern, Hermitian vector bundles and the equidistribution of the zeroes of their holomorphic sections,
\emph{Acta Math.} {\bf 114} (1965), no.~1, 71--112.

\bibitem{Bismut}
J.-M. Bismut,
A local index theorem for non-K\"ahler manifolds,
\emph{Math. Ann.} {\bf 284} (1989), 681--699.

\bibitem{CHSW} P. Candelas, G.T. Horowitz, A. Strominger, E. Witten,
Vacuum configurations for superstrings,
\emph{Nuclear Phys. B} {\bf 258} (1985), 46--74.

\bibitem{COMS} P. Candelas, X. De la Ossa, J. McOrist, R. Sisca,
The universal geometry of heterotic vacua,
\emph{J. High Energy Phys.} 02 (2019), 038.

\bibitem{OS}
X. De la Ossa, E.E. Svanes,
Connections, field redefinitions and heterotic supergravity,
\emph{J. High Energy Phys.} 12 (2014), 008.

\bibitem{FeiYau}
T. Fei, S.-T. Yau,
Invariant solutions to the Strominger system on complex Lie groups and their quotients,
\emph{Comm. Math. Phys.} {\bf 338} (2015), 1183--1195.

\bibitem{FIUV09}
M. Fern\'andez, S. Ivanov, L. Ugarte, R. Villacampa,
Non-K\"ahler heterotic string compactifications with non-zero fluxes and constant dilaton,
\emph{Comm. Math. Phys.} {\bf 288} (2009), 677--697.

\bibitem{FGV}
A. Fino, G. Grantcharov, L. Vezzoni,
Solutions to the Hull-Strominger system with torus symmetry,
\emph{Comm. Math. Phys.} {\bf 388} (2021), 947--967.

\bibitem{FOU} A. Fino, A. Otal, L. Ugarte, Six dimensional solvmanifolds with holomorphically trivial canonical bundle, Int. Math. Res. Not. IMRN, {\bf 24} (2015), 13757-13799.

\bibitem{FS} A. Fino, F. Salvatore, Closed $\text{SL}(3,\C)$-structures on nilmanifolds, \emph{J. Geom. Phys.} {\bf 167} (2021), 104289, 19 pp.

\bibitem{F-Schu1} M. Freibert, F. Schulte-Hengesbach,
Half-flat structures on decomposable Lie groups,
\emph{Transform. Groups} {\bf 17} (2012), no. 1, 123--141.

\bibitem{F-Schu2} M. Freibert, F. Schulte-Hengesbach,
Half-flat structures on indecomposable Lie groups,
\emph{Transform. Groups} {\bf 17} (2012), no. 3, 657--689.

\bibitem{Fu-Yau} J.-X. Fu, S.-T. Yau,
The theory of superstring with flux on non-K\"ahler manifolds and the complex Monge-Amp\`ere equation,
\emph{J. Differential Geom.} {\bf 78} (2008), 369--428.

\bibitem{MGarcia-Crelle} M. Garcia-Fernandez,
T-dual solutions of the Hull-Strominger system on non-Kähler threefolds,
\emph{J. Reine Angew. Math.} {\bf 766} (2020), 137--150.

\bibitem{GFGM}
M. Garcia-Fernandez, R. Gonzalez Molina,
Futaki Invariants and Yau's Conjecture on the Hull-Strominger system,
arXiv:2303.05274 [math.DG].

\bibitem{Gau} P. Gauduchon,
Hermitian connections and Dirac operators,
\emph{Boll. Un. Mat. Ital.} B (7) {\bf 11} (1997), 257--288.

\bibitem{GP} E. Goldstein, S. Prokushkin,
Geometric model for complex non-K\"ahler manifolds with SU(3) structure,
\emph{Comm. Math. Phys.} {\bf 251} (2004), 65--78.

\bibitem{Hitchin} N. Hitchin,
The geometry of three-forms in six dimensions,
\emph{J. Differential Geom.} {\bf 55} (2000), 547--576.

\bibitem{Hull} C. Hull,
\emph{Superstring compactifications with torsion and space-time supersymmetry}, In
Turin 1985 Proceedings ``Superunification and Extra Dimensions'' (1986) 347--375.

\bibitem{Iv} S. Ivanov,
Heterotic supersymmetry, anomaly cancellation and equations of motion,
\emph{Phys. Lett.} \textbf{B 685} (2010), 190--196.

\bibitem{Li-Yau} J. Li, S.-T. Yau,
The existence of supersymmetric string theory with torsion,
\emph{J. Differential Geom.} \textbf{70} (2005), 143--181.

\bibitem{Magnin} L. Magnin,
Left invariant complex structures on U(2) and SU(2)$\times$SU(2) revisited,
\emph{Rev. Roumaine Math. Pures Appl.} {\bf 55} (2010), no. 4, 269--296.

\bibitem{Michelsohn} M.L. Michelsohn,
On the existence of special metrics in complex geometry,
\emph{Acta Math.} {\bf 149} (1982), 261--295.

\bibitem{Milnor} J. Milnor,
Curvature of left invariant metrics on Lie groups,
\emph{Adv. Math.} {\bf 21} (1976), 293--329.

\bibitem{Nakamura}
I. Nakamura,
Complex parallelisable manifolds and their small deformations,
\emph{J. Differential Geom.} {\bf 10} (1975), no. 1, 85--112.

\bibitem{OUV17}
A. Otal, L. Ugarte, R. Villacampa,
Invariant solutions to the Strominger system and the heterotic equations of motion,
\emph{Nuclear Phys. B} {\bf 920} (2017), 442--474.

\bibitem{OUV23}
A. Otal, L. Ugarte, R. Villacampa,
The Hull-Strominger system on solvmanifolds,
\emph{to appear}.

\bibitem{Pujia}
M. Pujia,
The Hull-Strominger system and the anomaly flow on a class of solvmanifolds,
\emph{J. Geom. Phys.} {\bf 170} (2021), 104352, 15 pp.

\bibitem{S} S. Salamon,
Complex structures on nilpotent Lie algebras,
\emph{J. Pure Appl. Algebra} {\bf 157} (2001), 311--333.

\bibitem{Schu} F. Schulte-Hengesbach,
Half-flat structures on products of three-dimensional Lie groups,
\emph{J. Geom. Phys.} {\bf 60} (2010), 1726--1740.

\bibitem{Str} A. Strominger,
Superstrings with torsion,
\emph{Nuclear Phys. B} \textbf{274} (1986), 253--284.

\bibitem{UV15} L. Ugarte, R. Villacampa,
Balanced hermitian geometry on 6-dimensional nilmanifolds,
\emph{Forum Math.} {\bf 27} (2) (2015), 1025--1070.

\bibitem{Ug} L. Ugarte,
Hermitian structures on six-dimensional nilmanifolds,
\emph{Transform. Groups} {\bf 12} (2007), 175--202.

\end{thebibliography}
\end{document}